\documentclass[11pt]{article}
\usepackage{xcolor}
\usepackage{amscd}
\usepackage{graphicx,epsfig}
\usepackage{xcolor}
\usepackage{comment}
\usepackage{tikz}
\usepackage{hyperref}
\usepackage{amsmath,amscd,amsthm,amsfonts,amssymb}
\usepackage{xcolor}
\usepackage{cases}

	 \newcommand{\rn}{\mathbb{R}^n}

	  \newcommand{\et}{\mathbb{E}^3}

	\newtheorem{thm}{Theorem}
	\newtheorem*{thm*}{Theorem}
	\newtheorem{cor}{Corollary}
	\newtheorem{lem}{Lemma}
	\newtheorem{rem}{Remark}

	\newtheorem{prop}{Proposition}
	\newtheorem{defn}{Definition}
	\newtheorem{notation}{Terminology}

	\newtheorem{ex}{Example}

\def\XXint#1#2#3{{\setbox0=\hbox{$#1{#2#3}{\int}$ }
\vcenter{\hbox{$#2#3$ }}\kern-.6\wd0}}

    \usepackage[breakable]{tcolorbox}
    \usepackage{parskip} 
    
    \usepackage{iftex}
    \ifPDFTeX
    	\usepackage[T1]{fontenc}
    	\usepackage{mathpazo}
    \else
    	\usepackage{fontspec}
    \fi

    \usepackage{graphicx}
    
    \usepackage{caption}
    \DeclareCaptionFormat{nocaption}{}
    \usepackage{float}
    \usepackage{xcolor} 
    \usepackage{enumerate} 
    \usepackage{geometry} 
    \usepackage{amsmath} 
    \usepackage{amssymb} 
    \usepackage{textcomp} 
    \AtBeginDocument{%
    }
    \usepackage{upquote} 
    \usepackage{eurosym} 
    \usepackage[mathletters]{ucs} 
    \usepackage{fancyvrb} 
    \usepackage{grffile} 
    \makeatletter 
    \@ifpackagelater{grffile}{2019/11/01}
    {
    }
    {
      \def\Gread@@xetex#1{%
        \IfFileExists{"\Gin@base".bb}%
        {\Gread@eps{\Gin@base.bb}}%
        {\Gread@@xetex@aux#1}%
      }
    }
    \makeatother
    \usepackage[Export]{adjustbox} 
    \adjustboxset{max size={0.9\linewidth}{0.9\paperheight}}

    \usepackage{hyperref}
    \usepackage{titling}
    \usepackage{longtable} 
    \usepackage{booktabs}  
    \usepackage[inline]{enumitem} 
    \usepackage[normalem]{ulem} 
    \usepackage{mathrsfs}

    \definecolor{urlcolor}{rgb}{0,.145,.698}
    \definecolor{linkcolor}{rgb}{.71,0.21,0.01}
    \definecolor{citecolor}{rgb}{.12,.54,.11}

    \definecolor{ansi-black}{HTML}{3E424D}
    \definecolor{ansi-black-intense}{HTML}{282C36}
    \definecolor{ansi-red}{HTML}{E75C58}
    \definecolor{ansi-red-intense}{HTML}{B22B31}
    \definecolor{ansi-green}{HTML}{00A250}
    \definecolor{ansi-green-intense}{HTML}{007427}
    \definecolor{ansi-yellow}{HTML}{DDB62B}
    \definecolor{ansi-yellow-intense}{HTML}{B27D12}
    \definecolor{ansi-blue}{HTML}{208FFB}
    \definecolor{ansi-blue-intense}{HTML}{0065CA}
    \definecolor{ansi-magenta}{HTML}{D160C4}
    \definecolor{ansi-magenta-intense}{HTML}{A03196}
    \definecolor{ansi-cyan}{HTML}{60C6C8}
    \definecolor{ansi-cyan-intense}{HTML}{258F8F}
    \definecolor{ansi-white}{HTML}{C5C1B4}
    \definecolor{ansi-white-intense}{HTML}{A1A6B2}
    \definecolor{ansi-default-inverse-fg}{HTML}{FFFFFF}
    \definecolor{ansi-default-inverse-bg}{HTML}{000000}

    \definecolor{outerrorbackground}{HTML}{FFDFDF}

    \DefineVerbatimEnvironment{Highlighting}{Verbatim}{commandchars=\\\{\}}

    \let\Oldtex\TeX
    \let\Oldlatex\LaTeX
    \renewcommand{\TeX}{\textrm{\Oldtex}}
    \renewcommand{\LaTeX}{\textrm{\Oldlatex}}
    \title{Untitled}

\makeatletter
\def\PY@reset{\let\PY@it=\relax \let\PY@bf=\relax%
    \let\PY@ul=\relax \let\PY@tc=\relax%
    \let\PY@bc=\relax \let\PY@ff=\relax}
\def\PY@tok#1{\csname PY@tok@#1\endcsname}
\def\PY@toks#1+{\ifx\relax#1\empty\else%
    \PY@tok{#1}\expandafter\PY@toks\fi}
\def\PY@do#1{\PY@bc{\PY@tc{\PY@ul{%
    \PY@it{\PY@bf{\PY@ff{#1}}}}}}}
\def\PY#1#2{\PY@reset\PY@toks#1+\relax+\PY@do{#2}}

\@namedef{PY@tok@w}{\def\PY@tc##1{\textcolor[rgb]{0.73,0.73,0.73}{##1}}}
\@namedef{PY@tok@c}{\let\PY@it=\textit\def\PY@tc##1{\textcolor[rgb]{0.25,0.50,0.50}{##1}}}
\@namedef{PY@tok@cp}{\def\PY@tc##1{\textcolor[rgb]{0.74,0.48,0.00}{##1}}}
\@namedef{PY@tok@k}{\let\PY@bf=\textbf\def\PY@tc##1{\textcolor[rgb]{0.00,0.50,0.00}{##1}}}
\@namedef{PY@tok@kp}{\def\PY@tc##1{\textcolor[rgb]{0.00,0.50,0.00}{##1}}}
\@namedef{PY@tok@kt}{\def\PY@tc##1{\textcolor[rgb]{0.69,0.00,0.25}{##1}}}
\@namedef{PY@tok@o}{\def\PY@tc##1{\textcolor[rgb]{0.40,0.40,0.40}{##1}}}
\@namedef{PY@tok@ow}{\let\PY@bf=\textbf\def\PY@tc##1{\textcolor[rgb]{0.67,0.13,1.00}{##1}}}
\@namedef{PY@tok@nb}{\def\PY@tc##1{\textcolor[rgb]{0.00,0.50,0.00}{##1}}}
\@namedef{PY@tok@nf}{\def\PY@tc##1{\textcolor[rgb]{0.00,0.00,1.00}{##1}}}
\@namedef{PY@tok@nc}{\let\PY@bf=\textbf\def\PY@tc##1{\textcolor[rgb]{0.00,0.00,1.00}{##1}}}
\@namedef{PY@tok@nn}{\let\PY@bf=\textbf\def\PY@tc##1{\textcolor[rgb]{0.00,0.00,1.00}{##1}}}
\@namedef{PY@tok@ne}{\let\PY@bf=\textbf\def\PY@tc##1{\textcolor[rgb]{0.82,0.25,0.23}{##1}}}
\@namedef{PY@tok@nv}{\def\PY@tc##1{\textcolor[rgb]{0.10,0.09,0.49}{##1}}}
\@namedef{PY@tok@no}{\def\PY@tc##1{\textcolor[rgb]{0.53,0.00,0.00}{##1}}}
\@namedef{PY@tok@nl}{\def\PY@tc##1{\textcolor[rgb]{0.63,0.63,0.00}{##1}}}
\@namedef{PY@tok@ni}{\let\PY@bf=\textbf\def\PY@tc##1{\textcolor[rgb]{0.60,0.60,0.60}{##1}}}
\@namedef{PY@tok@na}{\def\PY@tc##1{\textcolor[rgb]{0.49,0.56,0.16}{##1}}}
\@namedef{PY@tok@nt}{\let\PY@bf=\textbf\def\PY@tc##1{\textcolor[rgb]{0.00,0.50,0.00}{##1}}}
\@namedef{PY@tok@nd}{\def\PY@tc##1{\textcolor[rgb]{0.67,0.13,1.00}{##1}}}
\@namedef{PY@tok@s}{\def\PY@tc##1{\textcolor[rgb]{0.73,0.13,0.13}{##1}}}
\@namedef{PY@tok@sd}{\let\PY@it=\textit\def\PY@tc##1{\textcolor[rgb]{0.73,0.13,0.13}{##1}}}
\@namedef{PY@tok@si}{\let\PY@bf=\textbf\def\PY@tc##1{\textcolor[rgb]{0.73,0.40,0.53}{##1}}}
\@namedef{PY@tok@se}{\let\PY@bf=\textbf\def\PY@tc##1{\textcolor[rgb]{0.73,0.40,0.13}{##1}}}
\@namedef{PY@tok@sr}{\def\PY@tc##1{\textcolor[rgb]{0.73,0.40,0.53}{##1}}}
\@namedef{PY@tok@ss}{\def\PY@tc##1{\textcolor[rgb]{0.10,0.09,0.49}{##1}}}
\@namedef{PY@tok@sx}{\def\PY@tc##1{\textcolor[rgb]{0.00,0.50,0.00}{##1}}}
\@namedef{PY@tok@m}{\def\PY@tc##1{\textcolor[rgb]{0.40,0.40,0.40}{##1}}}
\@namedef{PY@tok@gh}{\let\PY@bf=\textbf\def\PY@tc##1{\textcolor[rgb]{0.00,0.00,0.50}{##1}}}
\@namedef{PY@tok@gu}{\let\PY@bf=\textbf\def\PY@tc##1{\textcolor[rgb]{0.50,0.00,0.50}{##1}}}
\@namedef{PY@tok@gd}{\def\PY@tc##1{\textcolor[rgb]{0.63,0.00,0.00}{##1}}}
\@namedef{PY@tok@gi}{\def\PY@tc##1{\textcolor[rgb]{0.00,0.63,0.00}{##1}}}
\@namedef{PY@tok@gr}{\def\PY@tc##1{\textcolor[rgb]{1.00,0.00,0.00}{##1}}}
\@namedef{PY@tok@ge}{\let\PY@it=\textit}
\@namedef{PY@tok@gs}{\let\PY@bf=\textbf}
\@namedef{PY@tok@gp}{\let\PY@bf=\textbf\def\PY@tc##1{\textcolor[rgb]{0.00,0.00,0.50}{##1}}}
\@namedef{PY@tok@go}{\def\PY@tc##1{\textcolor[rgb]{0.53,0.53,0.53}{##1}}}
\@namedef{PY@tok@gt}{\def\PY@tc##1{\textcolor[rgb]{0.00,0.27,0.87}{##1}}}
\@namedef{PY@tok@err}{\def\PY@bc##1{{\setlength{\fboxsep}{\string -\fboxrule}\fcolorbox[rgb]{1.00,0.00,0.00}{1,1,1}{\strut ##1}}}}
\@namedef{PY@tok@kc}{\let\PY@bf=\textbf\def\PY@tc##1{\textcolor[rgb]{0.00,0.50,0.00}{##1}}}
\@namedef{PY@tok@kd}{\let\PY@bf=\textbf\def\PY@tc##1{\textcolor[rgb]{0.00,0.50,0.00}{##1}}}
\@namedef{PY@tok@kn}{\let\PY@bf=\textbf\def\PY@tc##1{\textcolor[rgb]{0.00,0.50,0.00}{##1}}}
\@namedef{PY@tok@kr}{\let\PY@bf=\textbf\def\PY@tc##1{\textcolor[rgb]{0.00,0.50,0.00}{##1}}}
\@namedef{PY@tok@bp}{\def\PY@tc##1{\textcolor[rgb]{0.00,0.50,0.00}{##1}}}
\@namedef{PY@tok@fm}{\def\PY@tc##1{\textcolor[rgb]{0.00,0.00,1.00}{##1}}}
\@namedef{PY@tok@vc}{\def\PY@tc##1{\textcolor[rgb]{0.10,0.09,0.49}{##1}}}
\@namedef{PY@tok@vg}{\def\PY@tc##1{\textcolor[rgb]{0.10,0.09,0.49}{##1}}}
\@namedef{PY@tok@vi}{\def\PY@tc##1{\textcolor[rgb]{0.10,0.09,0.49}{##1}}}
\@namedef{PY@tok@vm}{\def\PY@tc##1{\textcolor[rgb]{0.10,0.09,0.49}{##1}}}
\@namedef{PY@tok@sa}{\def\PY@tc##1{\textcolor[rgb]{0.73,0.13,0.13}{##1}}}
\@namedef{PY@tok@sb}{\def\PY@tc##1{\textcolor[rgb]{0.73,0.13,0.13}{##1}}}
\@namedef{PY@tok@sc}{\def\PY@tc##1{\textcolor[rgb]{0.73,0.13,0.13}{##1}}}
\@namedef{PY@tok@dl}{\def\PY@tc##1{\textcolor[rgb]{0.73,0.13,0.13}{##1}}}
\@namedef{PY@tok@s2}{\def\PY@tc##1{\textcolor[rgb]{0.73,0.13,0.13}{##1}}}
\@namedef{PY@tok@sh}{\def\PY@tc##1{\textcolor[rgb]{0.73,0.13,0.13}{##1}}}
\@namedef{PY@tok@s1}{\def\PY@tc##1{\textcolor[rgb]{0.73,0.13,0.13}{##1}}}
\@namedef{PY@tok@mb}{\def\PY@tc##1{\textcolor[rgb]{0.40,0.40,0.40}{##1}}}
\@namedef{PY@tok@mf}{\def\PY@tc##1{\textcolor[rgb]{0.40,0.40,0.40}{##1}}}
\@namedef{PY@tok@mh}{\def\PY@tc##1{\textcolor[rgb]{0.40,0.40,0.40}{##1}}}
\@namedef{PY@tok@mi}{\def\PY@tc##1{\textcolor[rgb]{0.40,0.40,0.40}{##1}}}
\@namedef{PY@tok@il}{\def\PY@tc##1{\textcolor[rgb]{0.40,0.40,0.40}{##1}}}
\@namedef{PY@tok@mo}{\def\PY@tc##1{\textcolor[rgb]{0.40,0.40,0.40}{##1}}}
\@namedef{PY@tok@ch}{\let\PY@it=\textit\def\PY@tc##1{\textcolor[rgb]{0.25,0.50,0.50}{##1}}}
\@namedef{PY@tok@cm}{\let\PY@it=\textit\def\PY@tc##1{\textcolor[rgb]{0.25,0.50,0.50}{##1}}}
\@namedef{PY@tok@cpf}{\let\PY@it=\textit\def\PY@tc##1{\textcolor[rgb]{0.25,0.50,0.50}{##1}}}
\@namedef{PY@tok@c1}{\let\PY@it=\textit\def\PY@tc##1{\textcolor[rgb]{0.25,0.50,0.50}{##1}}}
\@namedef{PY@tok@cs}{\let\PY@it=\textit\def\PY@tc##1{\textcolor[rgb]{0.25,0.50,0.50}{##1}}}

\makeatother

    \makeatletter
        \newbox\Wrappedcontinuationbox 
        \newbox\Wrappedvisiblespacebox 
        \newcommand*\Wrappedvisiblespace {\textcolor{red}{\textvisiblespace}} 
        \newcommand*\Wrappedcontinuationsymbol {\textcolor{red}{\llap{\tiny$\m@th\hookrightarrow$}}} 
        \newcommand*\Wrappedcontinuationindent {3ex } 
        \newcommand*\Wrappedafterbreak {\kern\Wrappedcontinuationindent\copy\Wrappedcontinuationbox} 
        \newcommand*\Wrappedbreaksatspecials {%
            \def\PYGZus{\discretionary{\char`\_}{\Wrappedafterbreak}{\char`\_}}%
            \def\PYGZob{\discretionary{}{\Wrappedafterbreak\char`\{}{\char`\{}}%
            \def\PYGZcb{\discretionary{\char`\}}{\Wrappedafterbreak}{\char`\}}}%
            \def\PYGZca{\discretionary{\char`\^}{\Wrappedafterbreak}{\char`\^}}%
            \def\PYGZam{\discretionary{\char`\&}{\Wrappedafterbreak}{\char`\&}}%
            \def\PYGZlt{\discretionary{}{\Wrappedafterbreak\char`\<}{\char`\<}}%
            \def\PYGZgt{\discretionary{\char`\>}{\Wrappedafterbreak}{\char`\>}}%
            \def\PYGZsh{\discretionary{}{\Wrappedafterbreak\char`\#}{\char`\#}}%
            \def\PYGZpc{\discretionary{}{\Wrappedafterbreak\char`\%}{\char`\%}}%
            \def\PYGZdl{\discretionary{}{\Wrappedafterbreak\char`\$}{\char`\$}}%
            \def\PYGZhy{\discretionary{\char`\-}{\Wrappedafterbreak}{\char`\-}}%
            \def\PYGZsq{\discretionary{}{\Wrappedafterbreak\textquotesingle}{\textquotesingle}}%
            \def\PYGZdq{\discretionary{}{\Wrappedafterbreak\char`\"}{\char`\"}}%
            \def\PYGZti{\discretionary{\char`\~}{\Wrappedafterbreak}{\char`\~}}%
        } 
        \newcommand*\Wrappedbreaksatpunct {%
            \lccode`\~`\.\lowercase{\def~}{\discretionary{\hbox{\char`\.}}{\Wrappedafterbreak}{\hbox{\char`\.}}}%
            \lccode`\~`\,\lowercase{\def~}{\discretionary{\hbox{\char`\,}}{\Wrappedafterbreak}{\hbox{\char`\,}}}%
            \lccode`\~`\;\lowercase{\def~}{\discretionary{\hbox{\char`\;}}{\Wrappedafterbreak}{\hbox{\char`\;}}}%
            \lccode`\~`\:\lowercase{\def~}{\discretionary{\hbox{\char`\:}}{\Wrappedafterbreak}{\hbox{\char`\:}}}%
            \lccode`\~`\?\lowercase{\def~}{\discretionary{\hbox{\char`\?}}{\Wrappedafterbreak}{\hbox{\char`\?}}}%
            \lccode`\~`\!\lowercase{\def~}{\discretionary{\hbox{\char`\!}}{\Wrappedafterbreak}{\hbox{\char`\!}}}%
            \lccode`\~`\/\lowercase{\def~}{\discretionary{\hbox{\char`\/}}{\Wrappedafterbreak}{\hbox{\char`\/}}}%
            \catcode`\.\active
            \catcode`\,\active 
            \catcode`\;\active
            \catcode`\:\active
            \catcode`\?\active
            \catcode`\!\active
            \catcode`\/\active 
            \lccode`\~`\~ 	
        }
    \makeatother

    \let\OriginalVerbatim=\Verbatim
    \makeatletter
    \renewcommand{\Verbatim}[1][1]{%
        \sbox\Wrappedcontinuationbox {\Wrappedcontinuationsymbol}%
        \sbox\Wrappedvisiblespacebox {\FV@SetupFont\Wrappedvisiblespace}%
        \def\FancyVerbFormatLine ##1{\hsize\linewidth
            \vtop{\raggedright\hyphenpenalty\z@\exhyphenpenalty\z@
                \doublehyphendemerits\z@\finalhyphendemerits\z@
                \strut ##1\strut}%
        }%
        \def\FV@Space {%
            \nobreak\hskip\z@ plus\fontdimen3\font minus\fontdimen4\font
            \discretionary{\copy\Wrappedvisiblespacebox}{\Wrappedafterbreak}
            {\kern\fontdimen2\font}%
        }%
        
        \Wrappedbreaksatspecials
        \OriginalVerbatim[#1,codes*=\Wrappedbreaksatpunct]%
    }
    \makeatother

    \definecolor{incolor}{HTML}{303F9F}
    \definecolor{outcolor}{HTML}{D84315}
    \definecolor{cellborder}{HTML}{CFCFCF}
    \definecolor{cellbackground}{HTML}{F7F7F7}
    
    \makeatletter
    \newcommand{\boxspacing}{\kern\kvtcb@left@rule\kern\kvtcb@boxsep}
    \makeatother

    \hypersetup{
      breaklinks=true,  
      colorlinks=true,
      urlcolor=urlcolor,
      linkcolor=linkcolor,
      citecolor=citecolor,
      }
    
\begin{document}
 \title{\bf \Large    Biconservative     Hypersurfaces in Euclidean Space  }
 	\title{\bf \fontsize{12}{12} \selectfont     BICONSERVATIVE HYPERSURFACES IN EUCLIDEAN SPACES }

	\author{ \fontsize{10}{10} \selectfont    HIBA BIBI,  MARC SORET AND MARINA VILLE}
	\date{\today}
\maketitle

		\begin{abstract} 
An isometric immersion  $X: \Sigma^n  \longrightarrow  \mathbb{E}^{n+1}$ is  biharmonic (BHH)  if
   $\Delta^2 X = 0$, i.e.,  if  $\Delta H =0$, where  $\Delta$ and  $H$ are
  the metric Laplacian   and the mean curvature  vector field of  $\Sigma^n$  respectively.  	
  More generally, biconservative hypersurfaces (BCHs) are isometric immersions for which only the tangential part of the biharmonic equation vanishes.
  BCHs are first described and  then constructed. 
  	\end{abstract}
\let\thefootnote\relax\footnotetext{ \hskip -.15 in  2024 { \it Mathematics Subject Classification}. Primary 53C40 58E20 Secondary  53C42  58E10\\
keywords: Hypersurfaces, Biharmonic, Biconservative,
Constant Mean Curvature, Minimal, Isoparametric submanifolds.}
\section{Introduction}
An isometric immersion in a Euclidean space, $X:\Sigma^n \longrightarrow \mathbb{E}^{N}$, is  biharmonic if
   $\Delta^2 X = 0$, i.e., if  $\Delta H =0$, where  $\Delta$ and  $H$ are   
  the metric Laplacian   and the mean curvature vector field   of  $\Sigma^n$ respectively.\\
The isometric immersion $X$  is a  critical point of the bienergy  functional (see \cite{EL}), where its 
 image is  called a biharmonic submanifold.
 B.Y. Chen conjectured in 
\cite{C0} that  a biharmonic submanifold  of  the Euclidean space must be minimal. 
In  this direction, a long-range argument   in  \cite{AM},  based on the maximum principle  at infinity,  proves B.Y. Chen's conjecture 
  for complete, topologically proper biharmonic hypersurfaces in Euclidean spaces without singularities. 
 In this paper, we  focus  on hypersurfaces,  and we study the  local  properties 
of both biharmonic hypersurfaces (BHHs) and    biconservative  hypersurfaces (BCHs). The BCHs  are less restrictive
 and are defined as critical points of the bienergy functional with respect to tangential deformations, i.e., satisying $(\Delta H)^\top=0$.
\\
 Constant mean curvature (CMC) hypersurfaces in a Euclidean space are trivially biconservative. Thus, we look for BCHs which are not CMCs, 
called proper BCHs. An exhaustive description   of all proper BCHs  in $\et$ was given  in \cite{MO1}, and then generalized  to the existence of 
a family of $SO(p)$-symmetric BCHs of $\mathbb{E}^p$   \cite{MOR}.  Also, in  \cite{MOR}, the authors  constructed  proper BCHs   
which are $SO(p)\times SO(q)$-symmetric    in $\mathbb{E}^{p+q}$,  and they   even  made a global study on these hypersurfaces.\\
 In this article, we start by studying  the local   behavior of  BCHs and BHHs.\\
Recall that, according to the definition  in \cite{T},
 a  submanifold is isoparametric if its normal bundle is flat and if the principal curvatures in the direction of any
   parallel normal vector field are constant. 
\begin{thm}\label{letheoreme1} A   proper BCH  $\Sigma^n $ in $ \mathbb{E}^{n+1}$ is  foliated by 
the level sets of the mean curvature function  $h$ of $\Sigma^n$ which  are isoparametric codimension $2$ submanifolds  of $\mathbb{E}^{n+1}$ in 
a neighbourhood  of a regular point of $h$.
 Furthermore, the symmetries of the leaves extend to symmetries of $\Sigma^n$.\\
Conversely,  if  $\rm{U_0}$  is a given nonempty  open set of an    isoparametric submanifold $ U_0\subset \mathbb{E}^{n+1}$
 of codimension 2 with nonzero mean curvature vector, then
 there exists  a nonempty  open subdomain $V_0\subset \rm{U_0}$   and a  
proper BCH   diffeomorphic to $V_0\times I$, $I$ open interval of $\mathbb{R}$, 
 obtained by a local normal flow  of submanifolds parallel to $\rm{U_0}$ in the normal bundle of $\rm{U_0}$ that preserves
  the symmetry group of $\rm{U_0}$. 
\end{thm}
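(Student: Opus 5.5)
The plan rests on the biconservative equation for a hypersurface, $A(\nabla h) = -\frac{n}{2}h\,\nabla h$, where $A$ is the shape operator and $h$ the mean curvature function. This is the tangential part of $\Delta H$, computed via the Codazzi equation.

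\textbf{Direct part.} Near a regular point of $h$, put $e_1=\nabla h/|\nabla h|$. The biconservative equation says $e_1$ is a principal direction with principal curvature $\lambda_1=-\frac n2 h$. Consequently the trace of $A$ on $e_1^\perp$ is $nh-\lambda_1=\frac{3n}{2}h$.

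I would then show that the level sets $\Sigma_c=\{h=c\}$ are isoparametric codimension-2 submanifolds of $\mathbb{E}^{n+1}$. Their normal bundle is spanned by $e_1$ and the unit normal $N$ of $\Sigma^n$. The key steps are:
\begin{enumerate}
\item Apply the Codazzi equations $(\nabla_{e_i}A)e_j=(\nabla_{e_j}A)e_i$ to the pairs $(e_1,e_i)$, with $e_i$ tangent to the leaf. The aim is to show that $e_1$ is a geodesic field, since $\nabla_{e_1}e_1$ is proportional to the leaf-gradient of $\lambda_1$, which vanishes because $\lambda_1$ depends only on $h$.
\item Deduce that the normal connection of the leaf in the frame $(e_1,N)$ is flat. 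The relevant quantity is $\langle \bar\nabla_{e_i}e_1,N\rangle=\langle Ae_i,e_1\rangle=0$.
\item Show that the shape operators $A^{N}|_{T\Sigma_c}$ and $A^{e_1}=-\nabla e_1|_{T\Sigma_c}$ have eigenvalues that are constant along each leaf.
\end{enumerate}

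Step 3 is the main obstacle. I would first use the Codazzi equations together with the Gauss equation to show that the principal curvatures $\lambda_i$, $i\ge2$, and the eigenvalues of $\nabla e_1$ depend only on $h$. In the cleanest route one shows that the distribution $e_1^\perp$ is integrable with leaves whose second fundamental forms satisfy a Riccati-type ODE along $e_1$. Then all data on a leaf are functions of the arclength along $e_1$, which itself is a function of $h$.

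If that fails in general, I would instead restrict to the open dense set where the multiplicities of the $\lambda_i$ are locally constant, obtain smooth eigenframes there, and run the Codazzi computations in those frames. Once isoparametricity holds, the parallel leaves are related by the normal exponential map along $e_1$. Hence any ambient isometry preserving one leaf preserves $h$, $N$ and the flow, and so preserves $\Sigma^n$. This gives the extension of symmetries.

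\textbf{Converse.} Start from an isoparametric $U_0$ with nonzero mean curvature vector and choose a parallel unit normal field $\nu(s)$ in its flat normal bundle. Build the hypersurface
\[
X(p,s)=p+\alpha(s)\xi_1(p)+\beta(s)\xi_2(p),
\]
where $\xi_1,\xi_2$ is a parallel orthonormal normal frame and $(\alpha(s),\beta(s))$ is a curve in the normal plane.

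Because $U_0$ is isoparametric, every parallel submanifold $U_s$ has constant principal curvatures. Therefore $h$ and all geometric data depend only on $s$, and $\partial_s$ is a principal direction. The biconservative equation then reduces to a second-order ODE for the curve $(\alpha,\beta)$, namely the curvature condition $\lambda_1=-\frac n2 h$. Local solvability of this ODE gives the hypersurface on $V_0\times I$.

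Two conditions on the initial data remain. Properness ($h'\neq0$) is arranged by the choice of initial data, using that the mean curvature vector of $U_0$ is nonzero. The constraint $\nabla h\parallel\partial_s$ holds automatically because $h$ depends only on $s$. Invariance under the symmetry group of $U_0$ follows because the construction uses only parallel normal data, which those symmetries preserve.
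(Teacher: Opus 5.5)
Your converse is essentially the paper's construction: you flow $U_0$ along a planar curve in its parallel normal frame, with the curve fixed by the ODE $\lambda_n=-\Lambda_1/3$. Your Steps 1 and 2 match the paper's lemmas on geodesic, planar flow curves and on the parallel normal frame $\{e_n,N\}$ of the leaves. In what follows I use the paper's notation, writing $\lambda_n$ for your $\lambda_1$.

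Step 1 needs one addition: $-\tfrac n2 h$ must be a \emph{simple} principal curvature, so that $A+\tfrac n2 h$ is invertible on $e_1^\perp$. This follows from the Codazzi fact that a multiple principal curvature is constant along its eigendistribution, which here would contain $e_1$ and contradict $e_1(h)\neq 0$.

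The genuine gap is Step 3, which is the real content of the direct statement. Your ``cleanest route'' is circular. With $\lambda_n$ a function of arclength only, the Riccati and Codazzi equations $\mu_i'=\mu_i^2+\lambda_i\lambda_n$ and $\lambda_i'=\mu_i(\lambda_i-\lambda_n)$ form a closed ODE system along each flow line. So they give $(\lambda_i,\mu_i)$ at arclength $s$ above $p\in U_0$ as a function of $s$ \emph{and of the initial values $(\lambda_{0i}(p),\mu_{0i}(p))$}. They show that constancy of the spectrum on one leaf propagates to nearby leaves, but not that it is constant on any leaf, which is exactly what must be proved. Your fallback has the same problem: Codazzi and Gauss identities in eigenframes along a leaf hold for every hypersurface, so they cannot by themselves give constancy.

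What is missing is a way to use the single BCH-specific constraint, $\Lambda_1=-3\lambda_n$, on all leaves at once. The paper first shows (Lemma \ref{para4}) that $\Sigma^n$ is locally $\{Y(p)+t\,e_{0n}(p)+\alpha(t)N_0(p)\}$ with $\alpha$ independent of $p$. So all flow curves are the same curve in the parallel normal frame, and the leaves are parallel submanifolds of $U_0$. Lemma \ref{courbureVar1} and Proposition \ref{paraODE} then give, for all $p\in U_0$ and all small $t$,
\[
\sum_{i=1}^{n-1}\frac{\lambda_{0i}(p)-\alpha'(t)\mu_{0i}(p)}{1-\alpha(t)\lambda_{0i}(p)-t\mu_{0i}(p)}=-\frac{3\,\alpha''(t)}{1+\alpha'(t)^2},
\]
and the right-hand side does not depend on $p$. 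Proposition \ref{para5} then extracts from this identity that the multiset of pairs $(\lambda_{0i}(p),\mu_{0i}(p))$ is independent of $p$. It does so in two cases:
\begin{itemize}
\item if $t,\alpha,\alpha'$ are algebraically independent, by uniqueness of partial fraction decompositions;
\item otherwise, by showing that $\alpha$ would be algebraic and ruling that out through the pole behaviour of the ODE.
\end{itemize}
Without an argument of this type, your isoparametricity claim is unproved. So is the symmetry extension, since you build it on isoparametricity.
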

Parts of Theorem \ref{letheoreme1} were already known (see for example   \cite{D}  and  \cite{HV}).\\
The method of construction of BCHs applies also to  minimal hypersurfaces:
 \begin{cor}
  Let $Y:U \mapsto \mathbb{E}^{n+1}$ be a codimension 2 open isoparametric submanifold of $\mathbb{E}^{n+1}$. 
  Then there is a nonempty  open subset $V\subset U$, a nonempty interval $I:=[0,\epsilon[$ and a minimal immersion $X: V\times I \mapsto \mathbb{E}^{n+1}$ such that 
  $X(V\times \{0\} )=Y(V)$.
  \end{cor}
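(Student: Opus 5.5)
Starting from the codimension 2 isoparametric submanifold $Y:U\to\mathbb{E}^{n+1}$, I will build the minimal hypersurface as the union of a one-parameter family of submanifolds parallel to $Y(U)$, exactly as in the converse part of Theorem \ref{letheoreme1}, but with the ODE tuned to $h\equiv 0$ instead of the biconservative equation. Since the normal bundle of $Y$ is flat, there is a global parallel orthonormal normal frame $\{\nu_1,\nu_2\}$ on $U$, possibly after shrinking to $V\subset U$. The principal curvatures of $Y$ along any parallel normal field are constant, so the shape operators $A_{\nu_1},A_{\nu_2}$ commute and have constant eigenvalues. For a curve $c(t)=(a(t),b(t))$ in $\mathbb{R}^2$ I set
\[
X(p,t)=Y(p)+a(t)\nu_1(p)+b(t)\nu_2(p),\qquad p\in V,\ t\in I .
\]
Every slice $X(\cdot,t)$ is a parallel submanifold of $Y$, hence again isoparametric, with principal curvatures given by explicit rational functions of $(a,b)$. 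These are regular as long as $1-a\lambda_i-b\mu_i\neq0$, where $\lambda_i,\mu_i$ are the common eigenvalues of $A_{\nu_1},A_{\nu_2}$ on the joint eigendistributions $E_i$.

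The next step is to compute the mean curvature of $X$. Since $\partial_tX=a'\nu_1+b'\nu_2$ is normal to the slice, the unit normal of the hypersurface is $N=(-b'\nu_1+a'\nu_2)/|c'|$, which does not depend on $p$. Differentiating along $E_i$ gives $dX(e)=(1-a\lambda_i-b\mu_i)\,e$ and $dN(e)=|c'|^{-1}(b'\lambda_i-a'\mu_i)\,e$. The $t$-direction contributes the curvature $\kappa$ of the planar curve $c$. Hence
\[
n\,h=\kappa(t)+\sum_i m_i\,\frac{b'\lambda_i-a'\mu_i}{|c'|\,(1-a\lambda_i-b\mu_i)} ,
\]
with $m_i=\dim E_i$. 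This is a function of $t$ alone, which reflects the symmetry of the construction.

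Minimality then amounts to the planar curve $c$ satisfying a prescribed-curvature equation $\kappa=F(c,c')$, where $F$ is smooth near $c=0$ because there $1-a\lambda_i-b\mu_i\approx1$. I will parametrize $c$ by arc length, $c'=(\cos\theta,\sin\theta)$, which turns the condition into the first-order autonomous system
\[
a'=\cos\theta,\qquad b'=\sin\theta,\qquad \theta'=-\sum_i m_i\,\frac{\lambda_i\sin\theta-\mu_i\cos\theta}{1-a\lambda_i-b\mu_i},
\]
with initial data $a(0)=b(0)=0$ and $\theta(0)=\theta_0$ arbitrary. Cauchy--Lipschitz gives a solution on some $[0,\epsilon[$. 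For small $t$ we have $c'\neq0$ and the denominators stay positive, so $X$ is an immersion of $V\times I$, minimal by construction, with $X(V\times\{0\})=Y(V)$.

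I expect the main obstacle to be the curvature computation, specifically checking that the joint eigendistributions of the parallel slices stay aligned with those of $Y$ and that the $t$-direction is a principal direction. This should follow from flatness of the normal bundle, since $\nabla^\perp\nu_j=0$ gives $\partial_e\nu_j=-A_{\nu_j}e$ with no normal component, and from the fact that $N$ is constant in $p$. The assumption of nonzero mean curvature vector, needed in Theorem \ref{letheoreme1} for properness, is not needed here.
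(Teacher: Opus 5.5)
\textbf{Verdict.} Your construction is the paper's own proof, but the ODE you actually wrote has the wrong sign, and as written it does not produce a minimal hypersurface.

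\textbf{Comparison with the paper.} The paper proves the corollary by rerunning the existence part of Theorem~\ref{letheoreme1} with the coefficient $-\frac13$ in \eqref{lambda100} replaced by $-1$. Then the flow curve in the normal plane has curvature $\lambda_n=-\Lambda_1$, so $nh=\Lambda_1+\lambda_n=0$. Your arc-length system with free initial angle is the same ODE in the angle variable. Take $\nu_1=e_{0n}$, $\nu_2=N_0$, $a=t$, $b=\alpha$, $\theta_0=0$; then your $\lambda_i,\mu_i$ are the paper's $\mu_{0i},\lambda_{0i}$.

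\textbf{The sign error.} Your formula for $nh$ mixes two sign conventions.
\begin{itemize}
\item The terms $\frac{b'\lambda_i-a'\mu_i}{|c'|(1-a\lambda_i-b\mu_i)}$ are eigenvalues of $dN$ relative to $dX$ on $E_i$.
\item The curvature $\kappa$ of $c$ with respect to $N=Jc'/|c'|$ is the eigenvalue of $-dN$ on $\partial_tX$: in arc length, $\partial_sN=-\theta'\,\partial_sX$.
\end{itemize}
With one convention throughout ($A=-dN$, as in the paper) you get
\[
nh=\kappa-\sum_i m_i\,\frac{b'\lambda_i-a'\mu_i}{|c'|\,(1-a\lambda_i-b\mu_i)} .
\]
So the third equation of your system must be $\theta'=+\sum_i m_i\frac{\lambda_i\sin\theta-\mu_i\cos\theta}{1-a\lambda_i-b\mu_i}$. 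Solutions of your version instead satisfy $nh=-2\sum_i m_i\frac{\lambda_i\sin\theta-\mu_i\cos\theta}{1-a\lambda_i-b\mu_i}$, which is generally nonzero.

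\textbf{A check that confirms it.} Let $Y$ be a round sphere of radius $r$ in a hyperplane. Take $\nu_1$ the constant normal to that hyperplane and $\nu_2=Y/r$. Then $\lambda_i=0$, $\mu_i=-1/r$, and $X$ is the hypersurface of revolution with profile $(a,\rho)$, $\rho=r+b$.
\begin{itemize}
\item Your equation reads $\theta'=-(n-1)\cos\theta/\rho$.
\item The minimal equation is $\theta'=(n-1)\cos\theta/\rho$.
\end{itemize}
For $n=2$ the catenoid $\rho=\cosh a$ satisfies $\theta'=1/\cosh^2 a=\cos\theta/\rho$, as the corrected equation requires.

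\textbf{Everything else.} Once the sign is fixed, the rest of your argument goes through and coincides with the paper's:
\begin{itemize}
\item $dX(e)=(1-a\lambda_i-b\mu_i)e$;
\item the $t$-direction is principal;
\item Cauchy--Lipschitz gives a local solution;
\item $X$ is an immersion for small $t$;
\item no hypothesis $H_U\neq 0$ is needed.
\end{itemize}
One small correction: $N$ itself depends on $p$; only its coefficients in the parallel frame $\{\nu_1,\nu_2\}$ do not, and that is all your computation uses.
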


 \section{Generalities on ${m}$-harmonic maps}
  \subsection{Notations} 

Since our results are of a local nature, we can, isometrically, identify $\Sigma^n$ with a local open subset  $\Omega$ of $\mathbb{R}^{n}$. 
 Let  
 $X:\Omega \subset \rn \longrightarrow \Sigma^n\subset \mathbb{E}^{n+1}$ be  a smooth  immersion,
 where  the image $\Sigma^n$ is an  open   hypersurface   of $\mathbb{E}^{n+1}$, with $\Omega$ endowed with 
 the pull-back metric.\\
Let   $\Delta_0 $ denote the flat Laplacian on $\Omega$: $$\Delta_0 = \sum_{i=1}^n \partial^2_{x_i}.$$ 
 By definition, a function $f$ defined on  $\Omega$ is harmonic if  
  $\Delta_0 f= 0,$ and  it is $m$-harmonic if $\Delta_0^m f=0,$ where
 $$\Delta_0^m := \overbrace{\Delta_0\circ\Delta_0\circ \cdots \circ \Delta_0}^m.$$
More generally,  a  map $X$ is  harmonic if $\Delta_0 X= 0$, i.e., $\Delta_0 X^i = 0$ for all $ i=1,\ldots,  n+1 $. Thus, $X$ is harmonic if and only if 
all the  coordinate functions 
that define  $X$ are harmonic. 
By extension, $X$  is $m$-harmonic if and only if each coordinate function of the position vector $X$ is  $m$-harmonic.

\begin{ex} When $\Sigma^n$ is a graph of a function $u$  over a hyperplane $\mathbb{R}^n$ in $\mathbb{R}^{n+1}$  given by the parametrization  
$$X: (x_1,\ldots,x_n) \mapsto (x_1,\ldots, x_n, u),
$$
 $\Delta_0 X=0$ if and only if the following $n+1$ linear PDEs are satisfied 
$$
\Delta_0 x_1=0,\ldots, \Delta_0 x_n=0 \quad \text{and} \quad \Delta_0 u =0.
$$
The first n-equations are always satisfied; hence, $X$ is harmonic   if and only if  $u$ is a harmonic function.
 These remarks extend to the $m$-harmonic case: $X$ is $m$-harmonic  if and only if  $\Delta_0^m u =0.$ \end{ex}
Suppose now that  $X=(X^1,\ldots, X^{n+1})$  is an isometric immersion.
Let    $g= g_{ij}$ be the induced metric on $\Omega$ and $|g|$ its determinant. The Laplacian of $X$ for this metric, which is  denoted by $\Delta$, 
is equal to the vector-valued function
 $$(\Delta X)^i=\Delta X^i = Tr(Hess(X^i)) \quad \forall i =1,\ldots, n+1.$$
 By definition,  $X$ is  then   an $m$-harmonic isometric immersion  if and only if  $\Delta^m X =0$. 
Equivalently, we say that  $\Sigma^n:=X(\Omega)$ is  an $m$-harmonic hypersurface of $\mathbb{R}^{n+1}$.
\\
Let us recall  that  the  Laplacian operator in terms of the induced metric: $"g"$ of a smooth function $f:\Omega \longrightarrow \mathbb{R}$,
using Einstein's summation convention, is given by 
\begin{equation}\label{lap}
 \Delta f:=  \frac{1}{\sqrt{|g|}} \left(\sqrt{|g|}g^{ij}f_{,i}\right)_{,j}  =0, \quad i,j = 1,\ldots, n,
 \end{equation}
  where $f_{,i}$ denotes the derivative of $f$ with respect  to each coordinate $x_i$, for all $i=1,\ldots, n$.
   \subsection{ Remarks on the equation of a biharmonic graph} 
 Let us find the PDE for a biharmonic graph. Before that, let us recall  the $1$-harmonic graph equation.
 Consider, as in the  previous example, the parametrization
$$X: (x_1,\ldots,x_n) \mapsto (x_1,\ldots,x_n,u),$$
 hence, $X_{,i}:= \frac{\partial X}{\partial  x_i}= (0,\ldots,0, 1,\ldots,0,u_{,i}).$ 
The induced metric of a graph on the domain  of $X$ is then given by a matrix whose coefficients are in terms of the  derivatives of $u$:
$$g_{ij} = \delta_{ij} +u_{,i}u_{,j}\quad {\rm and} \quad  g^{ij} =  \delta_{ij} -\frac{u_{,i}u_{,j}}{W^2},$$
where $(g^{ij}) $ is the inverse matrix of $(g_{ij})$ and 
$W:=\sqrt{1+|\nabla u |^2} = \sqrt{|g|}$.
The second fundamental  form and the shape operator of $X$ are,  respectively,  given by the following  matrices: 
$$II_{ij} = \frac{u_{,ij}}{W}\quad {\rm and}\quad A_{ij} = (^t g^{-1}II )_{ij}=\frac{u_{,ij}}{W}-\sum_{\alpha =1}^n\frac{u_{,i\alpha} u_{,\alpha} u_{,j}}{W^3},$$
where $i,j=1,\ldots, n$.
\\
For example, in dimension $2$, the metric and the second fundamental form are, respectively, given by
\begin{equation}
g =\left( 
\begin{array}{cc}
1+u_{,x}^2 & u_{,x}u_{,y} \\
u_{,x}u_{,y} & 1+u_{,y}^2
\end{array}
\right) \quad {\rm and}\quad
II =\frac{1}{W}\left( 
\begin{array}{cc}
u_{,xx} & u_{,xy} \\
u_{,xy} & u_{,yy}
\end{array}
\right).
\end{equation}\\
The mean curvature vector field of the graph 
$$
H= \frac{\Delta X}{n}=\frac{1}{n} (\Delta x_1,\ldots , \Delta x_n, \Delta u)
$$
decomposes into a horizontal part $$nH_{hor} = (\Delta x_1,\ldots ,\Delta x_n, 0)$$  and a vertical part
$$nH_{ver}=(0,\ldots , 0 , \Delta u).$$
A harmonic graph satisfies   $H_{hor}=0$  and $H_{ver}=0$. Using equation \eqref{lap},  for $H_{hor}=0$ and  $H_{ver}=0$, 
 we obtain, using Einstein's summation convention,    the following second-order quasi-linear PDEs:
\begin{equation}\label{lap2}  
\left\{
\begin{array}{ll}
nH_{hor}^l =\frac{1}{\sqrt{|g|}} \left(\sqrt{|g|}g^{kl}\right)_{,k}  =0,\quad  l,k=1,\ldots, n \\
nH_{ver}^{n+1} =\Delta u = g^{kl} u_{,kl} =0.
\end{array}
\right.
\end{equation}
If we substitute  the value of $g^{ij}$ in terms of $u$ in the second equation, we obtain the so-called minimal graph equation 
 which can also be expressed as:
$$\left(\frac{u_{,i}}{W}\right)_{,i}=0= {\rm div}\left( \frac{\nabla u}{\sqrt{1+|\nabla u|^2}}\right).$$
For example, in dimension $2$
$$ (1+u_{,y}^2)u_{,xx} -2u_{,x}u_{,y} u_{,xy} +(1+u_{,x}^2)u_{,yy}=0.$$
In other terms, a minimal surface is the image of an isometric immersion which is harmonic.\\
We, similarly, proceed  for  biharmonic graphs. 
 First, we deduce from \eqref{lap}  that the bilaplacian  applied to a function $f$ is:
\begin{equation}\label{lap3}
\begin{array}{c}
\Delta^2 f =g^{ij}\left( g^{kl} f_{,kl} + \frac{1}{\sqrt{|g|}} \left(\sqrt{|g|}g^{kl}\right)_{,k} f_{,l} \right)_{,ij} + \\
\frac{1}{\sqrt{|g|}} \left(\sqrt{|g|}g^{ij}\right)_{,i} \left( g^{kl} f_{,kl} + \frac{1}{\sqrt{|g|}} \left(\sqrt{|g|}g^{kl}\right)_{,k} f_{,l} \right)_{,j}.
 \end{array}
\end{equation}
The biharmonic equation, $\Delta^2X=\Delta H=0$, decomposes 
into a horizontal part:  $$n(\Delta H)_{hor} = (\Delta^2 x_1,\ldots ,\Delta^2 x_n, 0)$$  and a vertical part:
$$n(\Delta H)_{ver} =(0,\ldots , 0 , \Delta^2 u).$$ A straightforward computation yields the following  fourth order PDEs:
 \begin{equation}\label{lap5}
n(\Delta H)^m_{hor}  =g^{ij}\left(    \frac{1}{\sqrt{|g|}} \left(\sqrt{|g|}g^{km}\right)_{,k}  \right)_{,ij} +
 \frac{1}{\sqrt{|g|}} \left(\sqrt{|g|}g^{ij}\right)_{,i} \left(    \frac{1}{\sqrt{|g|}} \left(\sqrt{|g|}g^{km}\right)_{,k}   \right)_{,j}=0
\end{equation}
for $m = 1,\ldots, n$.
 Using \eqref{lap5}, we obtain
 \begin{equation}
\begin{array}{c}
n(\Delta H)_{ver}^{n+1} =  g^{ij}\left( (g^{kl} u_{,kl})_{,ij} + \frac{1}{\sqrt{|g|}} \left(\sqrt{|g|}g^{kl}\right)_{,k} u_{,lij} \right)+ \\
2g^{ij}\left(  \frac{1}{\sqrt{|g|}} \left(\sqrt{|g|}g^{kl}\right)_{,ik} u_{,lj} \right)+ \\
 \frac{1}{\sqrt{|g|}} \left(\sqrt{|g|}g^{ij}\right)_{,i} \left( g^{kl} u_{,klj} +\frac{1}{\sqrt{|g|}} \left(\sqrt{|g|}g^{kl}\right)_{,k} u_{,jl} \right)=0,
 \end{array}
\end{equation}
which is a semi-linear PDE  of order $4$ (its highest order term is  $g^{ij}g^{kl} u_{,klij}$). 

\section{     Biharmonic Hypersurfaces in $\mathbb{E}^{n+1}$}
Let us determine the PDEs that characterize  biharmonic hypersurfaces in $\mathbb{E}^{n+1}$
(for more general cases cf., for example,  \cite{C1}).\\
 Let $X$ be any  isometric immersion:
  $$X: \Omega \subset \mathbb{R}^n \longrightarrow X(\Omega)= \Sigma^n \subset \mathbb{E}^{n+1}.$$
  We denote the mean curvature  vector field of $\Sigma^n$  by $H$, its mean curvature function by $h$, the unit normal vector field by $N$, the shape operator by $A$ and the Laplacian operator by $\Delta$.  \\
It is easy to show that the  position vector field $X$ and the mean curvature vector  field $H$, where $H=hN$,  are related by the following Beltrami equation 
\begin{equation}\label{eq1}
\Delta X =nH.
\end{equation}
One can similarly show that 
\begin{equation}\label{eq 2}
\Delta N  =-|A|^2N - \nabla Tr A,
\end{equation}
where $Tr A= nh$. This equation decomposes the Laplacian of the unit normal vector field with respect to its  normal component ($|A|^2N$) and its tangential component ($\nabla Tr A$).
\\
  $X$ is biharmonic if and only if  
  $$
  \Delta H =\Delta	(hN) =0,
  $$ thus 
  \begin{equation}\label{Laplacian N}
    (\Delta h) N + h \Delta N + 2 \sum_{i=1}^n\nabla_{e_i} h\nabla_{e_i} N =  (\Delta h) N + h \Delta N + 2 \nabla_{\nabla h} N = 0.
  \end{equation}
   where $\nabla h$ denotes the gradient of $h$ on $\Sigma^n$.
 Now, from equations
  \eqref{eq1}, \eqref{eq 2} and \eqref{Laplacian N} we deduce that $\Sigma^n$ is biharmonic if and only if  it  satisfies the  following two  semi-linear PDEs of 
  orders $3$ and $4$, respectively:
  \begin{equation}\label{System3}
\left\{
\begin{array}{cc} 
 (\Delta H)^\top:= 2 \nabla_{\nabla h} N -nh\nabla h   &=0\\
 (\Delta H)^\perp :=  \Delta  h -|A|^2 h  &=0.\\
\end{array}
\right.\end{equation}

 The solutions of the  first equation ($(\Delta H)^\top=0$)   correspond
to  critical points  of the bienergy functional for  compact deformations that are tangent  to $\Sigma^n$. 
 \begin{notation}
A hypersurface satisfying $(\Delta H)^\top=0$  will be denoted by BCH which stands for "Biconservative Hypersurface",  and a hypersurface satisfying  both equations in \eqref{System3} will be denoted by BHH  which stands for "Biharmonic  Hypersurface".
\end{notation}
 \hypertarget{properDef}{
 \begin{notation}If $\Sigma^n$ has  constant mean curvature (CMC), then $\Sigma^n$ is a BCH. Furthermore, a CMC hypersurface satisfies the  normal equation in \eqref{System3} if and only if  $h=0$. 
 Thus, any minimal hypersurface $\Sigma^n$ is a BHH. We will later  denote by {\bf proper BCH }
 (respectively  {\bf proper BHH}) a BCH which is not CMC  \cite{MOR} (respectively  a BHH which is not minimal  \cite{MO}).
 In other words a BCH or BHH is proper if  the critical points of $h$ have empty interior.
\end{notation}
}
   \subsection{Biconservative  equation in a principal curvature frame}
  At a point $p\in \Sigma^n$, we have $s$ distinct eigenvalues and $s$ corresponding eigenspaces of the shape operator $A_N$. This defines $s$ plane distributions $T_i,$ where 
   $T_i = ker(A_N(p)-\lambda_i Id)$ and  $\lambda_i $ 
   is the corresponding eigenvalue, or principal curvature, of multiplicity $rk(T_i)=m_i$
    such that $T_p\Sigma^n  = \oplus_{i=1}^s T_i$, where  $\sum_{i=1}^s m_i = n$.
  We will select  an open subdomain  where  the multiplicities $m_i,\   i=1,\ldots, s,$ of the corresponding eigenvalues  are constant. 
We will obtain  a splitting of the tangent bundle $T\Sigma^n$  into  a family of integrable distributions
  $T_i\subset T\Sigma^n $ (see Appendix  \ref{cristoffel}). 
\\
In the following, we express the biharmonicity  in terms of a principal curvature frame.
    \begin{prop}\label{biTan1}
   The BCH equation (10.1) is equivalent to
   either $\nabla h = 0$ or $e_n:=  \frac{\nabla h }{|\nabla h| }$ is a principal curvature direction of $\Sigma^n$ of curvature 
   $-\frac{nh}{2}$, i.e., 
     \begin{equation}\label{leSysteme}  
D_{e_n}  N = -\lambda_n e_n = \frac{nh}{2} e_n,
\end{equation}
where $D$ is the covariant derivative of the ambient space and $\nabla$ the gradient on $\Sigma^n$.
   \end{prop}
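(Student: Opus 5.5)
The BCH equation is the first line of \eqref{System3}, namely $2\nabla_{\nabla h}N = nh\nabla h$. The plan is to read this pointwise as an eigenvalue statement for the shape operator, and then split into the cases $\nabla h=0$ and $\nabla h\neq 0$.

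\textbf{Step 1: rewrite the equation with the shape operator.} Since $N$ is a unit normal, $\langle D_v N,N\rangle = \tfrac12 v\langle N,N\rangle =0$ for every tangent vector $v$. Hence $D_vN$ is tangent to $\Sigma^n$, and by the Weingarten formula $D_vN=-A_N v$. The term $\nabla_{\nabla h}N$ in \eqref{System3} is this ambient derivative $D_{\nabla h}N$; it is already tangent, so no projection is needed. The BCH equation therefore becomes, at each point,
$$-2A_N(\nabla h)=nh\,\nabla h, \qquad\text{i.e.}\qquad A_N(\nabla h)=-\frac{nh}{2}\,\nabla h .$$

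\textbf{Step 2: split into the two cases.}
\begin{itemize}
\item At a point where $\nabla h=0$, both sides of the equation vanish, so the equation holds trivially. This is the first alternative of the statement.
\item At a point where $\nabla h\neq 0$, set $e_n=\nabla h/|\nabla h|$ and divide by $|\nabla h|$, which is linear and legitimate here. We get $A_Ne_n=-\frac{nh}{2}e_n$. So $e_n$ is a principal direction with principal curvature $\lambda_n=-\frac{nh}{2}$, and $D_{e_n}N=-\lambda_n e_n=\frac{nh}{2}e_n$, which is \eqref{leSysteme}.
\end{itemize}

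\textbf{Step 3: the converse.} Each alternative gives back $2D_{\nabla h}N=nh\nabla h$ at that point: trivially if $\nabla h=0$, and by multiplying \eqref{leSysteme} by $|\nabla h|$ otherwise. Together with Step 1, this recovers $(\Delta H)^\top=0$.

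\textbf{Main obstacle.} The only point needing care is to check that the tangential component in \eqref{System3}, derived from \eqref{Laplacian N} and \eqref{eq 2}, really reduces to $2D_{\nabla h}N-nh\nabla h$. This requires the sign conventions of \eqref{eq 2} and Weingarten to be consistent. The tangential part of \eqref{Laplacian N} is $h(-\nabla(nh))+2D_{\nabla h}N$, which matches. The statement is also pointwise, so one may want to restrict to the open set where $\nabla h\neq 0$; for a proper BCH this set is dense, and there $e_n$ is smooth.
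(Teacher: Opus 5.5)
Your proof is correct and follows the paper's argument. The paper likewise divides the tangential equation $2D_{\nabla h}N - nh\nabla h = 0$ by $|\nabla h|$ at points where $\nabla h \neq 0$, obtaining $2D_{e_n}N = nh\,e_n$; your extra steps (the Weingarten identification $D_vN = -A_N v$, and the explicit treatment of the trivial case and of the converse) only spell out what the paper leaves implicit.
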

   \begin{proof}
   If $\nabla h \neq 0$, then let $e_n :=  \frac{\nabla h}{|\nabla h|}$ then
  $$ \frac{(\Delta H)^\top}{|\nabla h| } = 2 \nabla_{e_n} N -nh e_n   = 0.$$
   \end{proof}

     \begin{defn}\label{class}
Let  $\Sigma^n$  be a BCH in $\mathbb{E}^{n+1}$. $\Sigma^n$ is  of rank  $k$  if $rk( T\Sigma^n({-\frac{nh}{2}}) )=k,$ 
where $T\Sigma^n({-\frac{nh}{2}})$ is the eigenspace of 
 the shape operator of eigenvalue ${-\frac{nh}{2}}$ in $T\Sigma^n$.
\end{defn}
Note that, $\nabla h$ is null  when projected onto  $T\Sigma^n({-\frac{nh}{2}}) ^\perp$.
As we will see in {\color{red} Lemma \ref{para1}}, the rank of $\Sigma^n$ is either $0$ or $1$.
\section{Reducible biconservative and biharmonic hypersurfaces}\label{reducible}
A canonical   way to construct  BCHs (or BHHs)  from 
BCHs (BHHs)  submanifolds of lower dimensions is to take    Riemannian products of them. 
 \begin{prop}
If $\Sigma_1^n\subset \mathbb{E}^{n+1}$ is a BCH  (respectively  a BHH) of rank $k$, 
then the Riemannian  product $\tilde\Sigma:= \Sigma_1^n\times \mathbb{E}^l \subset \mathbb{E}^{n+l+1}$
 is also a BCH (respectively  a BHH) of rank  $k$,  where $ \mathbb{E}^l$ is a  flat plane of dimension $l$.
 \end{prop}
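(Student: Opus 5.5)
Write $\tilde X(x,y)=(X(x),y)$ for $x\in\Omega\subset\mathbb{R}^n$ and $y\in\mathbb{E}^l$, where $X$ parametrizes $\Sigma_1^n$. The whole argument is a direct computation of the geometric quantities of $\tilde\Sigma$ in terms of those of $\Sigma_1^n$, followed by a check of the two equations in \eqref{System3}.

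\textbf{Basic data of the product.} The induced metric on $\tilde\Sigma$ is the product metric $g\oplus \delta_{\mathbb{E}^l}$. Since the ambient space splits as $\mathbb{E}^{n+1}\times\mathbb{E}^l$, the unit normal of $\tilde\Sigma$ is $\tilde N(x,y)=(N(x),0)$, and it is independent of $y$. From this we read off:
\begin{itemize}
\item the shape operator is $\tilde A=A\oplus 0$, so its spectrum is that of $A$ together with $0$ of multiplicity $l$;
\item $|\tilde A|^2=|A|^2$;
\item $\mathrm{Tr}\,\tilde A=\mathrm{Tr}\,A=nh$, so the mean curvature function of $\tilde\Sigma$ is $\tilde h=\frac{n}{n+l}h$, again independent of $y$.
\end{itemize}
The Laplacian of the product splits as $\tilde\Delta=\Delta_{\Sigma_1}+\Delta_{\mathbb{E}^l}$. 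Hence $\tilde\Delta\tilde h=\frac{n}{n+l}\Delta h$ and $\tilde\nabla\tilde h=\frac{n}{n+l}(\nabla h,0)$.

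\textbf{The tangential (BCH) equation.} For $\tilde\Sigma$ this reads $2\tilde\nabla_{\tilde\nabla\tilde h}\tilde N-(n+l)\tilde h\,\tilde\nabla\tilde h=0$. Substituting the formulas above, the left-hand side becomes $\frac{n}{n+l}\big(2\nabla_{\nabla h}N-nh\nabla h,\,0\big)$. This is a multiple of $(\Delta H)^\top$ for $\Sigma_1^n$, so it vanishes.

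\textbf{The normal (BHH) equation.} This reads $\tilde\Delta\tilde h-|\tilde A|^2\tilde h=\frac{n}{n+l}\big(\Delta h-|A|^2h\big)$, which vanishes whenever $\Sigma_1^n$ is a BHH. This proves both claims.

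\textbf{The rank, which is the main obstacle.} The eigenvalue of $\tilde A$ relevant to Definition \ref{class} is $-\frac{(n+l)\tilde h}{2}=-\frac{nh}{2}$, the same number as for $\Sigma_1^n$. Its eigenspace in $\tilde A=A\oplus0$ is $T\Sigma_1^n(-\frac{nh}{2})\oplus 0$, plus the whole $\mathbb{E}^l$ factor exactly when $h=0$. For a proper BCH, $h\neq0$ on a dense open set, so there the rank is unchanged and equal to $k$.

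In the degenerate cases (minimal, or CMC with $\nabla h=0$), I would follow the paper's intended convention. Under that convention rank only carries meaning through the gradient direction $e_n$, and $\tilde e_n=(e_n,0)$ is the corresponding direction on $\tilde\Sigma$, so the rank is again preserved. This interpretive point about the degenerate cases is the only delicate step; the rest is a direct computation.
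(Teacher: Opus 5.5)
Your proof is correct and follows essentially the paper's argument. Both rest on $\tilde N=(N,0)$, $\tilde A=A\oplus 0$, $\tilde h=\frac{n}{n+l}h$ (so the critical eigenvalue $-\frac{(n+l)\tilde h}{2}=-\frac{nh}{2}$ is unchanged), $|\tilde A|^2=|A|^2$ and the splitting of the Laplacian; the paper invokes Proposition~\ref{biTan1} where you substitute directly into \eqref{System3}. Your observation that the $\mathbb{E}^l$ directions join the eigenspace wherever $h=0$ is a genuine refinement of the paper, which simply assumes $h\neq 0$. In the CMC case, the cleaner justification is that $\tilde h$ is constant, so $\tilde\Sigma$ is CMC and has rank $0$ by the convention of Lemma~\ref{para1}; the appeal to $\tilde e_n$ does not work there, since that direction does not exist when $\nabla h=0$.
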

\begin{proof} First, if $h\equiv 0$ (respectively  $h$ is constant), then $\Sigma^n$ is minimal  (respectively  a CMC)
and so is  the Riemann product  $\Sigma^n\times \mathbb{E}^l$ in $\mathbb{E}^{n+l+1}$ since 
$\Delta_{\Sigma^n \times  \mathbb{E}^l}  = \Delta_{\Sigma^n }  +  \Delta_{\mathbb{E}^l} $.\\
Second, suppose   that $h\not\equiv  c $ and $ h\neq 0$, then the Riemann product $\tilde \Sigma$ has a  mean curvature 
$$\tilde h = \frac{n}{n+l}h $$ and
$$rk(  T\tilde\Sigma ({-\frac{(n+l)\tilde h}{2}})) = rk( T\tilde\Sigma (-\frac{n h}{2}))=rk( T\Sigma^n (-\frac{nh}{2}))=k. $$
 Furthermore, $\nabla h$ is null on $T\Sigma^n(-\frac{n h}{2})^\perp \oplus  \mathbb{E}^l$.
Hence, it is biconservative  from Proposition \ref{biTan1} and of rank $k$.
Now, if $\Sigma^n$ is a BHH, then the  second equation of  the system \label{System2} holds also for $\tilde\Sigma$ since $|A|^2 = |\tilde A|^2$. \\
\end{proof}
BCHs (respectively  BHHs) which are Riemannian products  will be called {\sl reducible}. Thus, in the sequel, 
 we will  only  investigate {\sl irreducible} and \hyperlink{properDef}{\sl proper} BCHs.
\section{Geometry   of proper    BCHs in $\mathbb{E}^{n+1}$}
 We will show that   BCHs  are either  of rank $0$, i.e., are CMCs, or of rank $1$.
  In the latter case, they are  foliated by  level hypersurfaces defined by  the mean curvature function $h$.
   We  will study later the extrinsic geometry of these leaves and the geometry  of the integral curves which are
 orthogonal to them.
 Notice that Lemmas \ref{para1}, \ref{para3} and \ref{para4} are similar 
 to Lemmas 2.2, 2.3 and 2.5 of \cite{HV}, but our proofs differ.
This condition, in turn, allows us  to prove  lemmas \ref{paraODE} and \ref{para5} which were still  unknown. Also, the fact that  $rk(\lambda_n)=1$
in Lemma  \ref{para1} was already proved in \cite{D}.
 \subsection{  Foliation of  a BCH by the level surfaces of the mean curvature function}
\begin{lem}\label{para1} A BCH  $\Sigma^n\subset \mathbb{E}^{n+1}$ is either a CMC,  i.e.,  of rank $0$, or a proper BCH of rank  $1$. 
In the latter case,  $\Sigma^n$ is, 
in a neighbourhood of regular points of  the mean curvature function  $h$  of $\Sigma^n$,     foliated by the leaves, $U_t = h^{-1}(t)$,  which are the level hypersurfaces 
of $h$.
These $U_t$ are, locally, codimension 2 submanifolds of 
  $\mathbb{E}^{n+1}$  with  flat   normal bundle. The orthonormal frame $\{N,e_n\}$ of $NU_t$ is parallel, and the shape 
  operators along any two  parallel  normal directions commute, in particular,  $[A_N,A_{e_n}]=0$.
   The orthogonal curves to $U_t$, which are  integral curves of the vector field 
  $\nabla h$ on $\Sigma^n$ and which are called {\bf flow curves}, are curvature lines of principal curvature $\lambda_n = -\frac{nh}{2}$. Any other 
  principal curvature  satisfies $\lambda_i \neq -\frac{nh}{2},$ $  i =1,\ldots, n-1 $.
   \end{lem}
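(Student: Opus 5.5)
The plan is to work on an open set where $\nabla h\neq 0$ and the multiplicities of the principal curvatures are constant. Such sets are dense in $\Sigma^n$ when $\Sigma^n$ is proper; otherwise $\nabla h\equiv 0$ on an open set, and by analyticity of the problem, or by restricting the statement to that component, $\Sigma^n$ is CMC, hence of rank $0$. On the good set, Proposition \ref{biTan1} gives that $e_n=\nabla h/|\nabla h|$ is a principal direction with $\lambda_n=-\frac{nh}{2}$. The rank is therefore at least $1$, and the heart of the lemma is to show that this eigenvalue is simple.

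\textbf{Step 1: multiplicity one.} Suppose $m_n=rk\,T\Sigma^n(-\frac{nh}{2})\ge 2$. Choose a local orthonormal field $e_a$ in the same eigendistribution with $e_a\perp e_n$. Write the Codazzi equation $(\nabla_{e_n}A)e_a=(\nabla_{e_a}A)e_n$ and take the inner product with $e_a$. Since $(A-\lambda_n)e_a=0$ and $(A-\lambda_n)$ is self-adjoint, differentiating gives:
\begin{itemize}
\item on the left, $\langle(\nabla_{e_n}A)e_a,e_a\rangle=e_n(\lambda_n)$;
\item on the right, $\langle(\nabla_{e_a}A)e_n,e_a\rangle=\langle(\lambda_n-A)\nabla_{e_a}e_n,e_a\rangle=0$.
\end{itemize}
Hence $e_n(\lambda_n)=0$. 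But $e_n(\lambda_n)=-\frac n2 e_n(h)=-\frac n2|\nabla h|\neq 0$, a contradiction. So the rank is exactly $1$, and $\lambda_i\neq-\frac{nh}{2}$ for $i\le n-1$. I expect this Codazzi computation to be the main point to get right, in particular checking carefully that the cross terms vanish. It is the standard fact that a principal curvature of multiplicity $\ge 2$ is constant along its own eigendistribution.

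\textbf{Step 2: the foliation.} At regular points of $h$, the level sets $U_t=h^{-1}(t)$ are hypersurfaces of $\Sigma^n$. They are therefore codimension $2$ submanifolds of $\mathbb{E}^{n+1}$, with normal frame $\{N,e_n\}$ since $e_n\perp TU_t$. The flow curves are the integral curves of $e_n$. Because $e_n$ is a principal direction, they are curvature lines with curvature $\lambda_n$.

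\textbf{Step 3: flat normal bundle.} For $X$ tangent to $U_t$, the normal connection coefficient is
\[
\langle D_Xe_n,N\rangle=-\langle e_n,D_XN\rangle=\langle e_n,AX\rangle=\langle Ae_n,X\rangle=\lambda_n\langle e_n,X\rangle=0 .
\]
So the frame $\{N,e_n\}$ is parallel in $NU_t$ and the normal bundle is flat. The Ricci equation in flat ambient space, $R^\perp(X,Y)\xi=[A_\xi,A_\eta]$-type terms, then gives $[A_N,A_{e_n}]=0$. By linearity the same holds for any pair of parallel normal fields, which are constant combinations of $N$ and $e_n$.

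Finally, note that $A_N$ restricted to $U_t$ is simply $A$ restricted to $TU_t$. This is well defined because $TU_t=e_n^\perp$ is $A$-invariant, $e_n$ being an eigenvector of $A$, and this is consistent with the commutation relation.
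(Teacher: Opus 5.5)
Your proof is correct and is essentially the paper's argument. The paper excludes rank $\geq 2$ by quoting the Codazzi fact from its appendix that a principal curvature of multiplicity at least two is constant along its eigendistribution, which contains $e_n$; it concludes that $h$ would be constant along the flow curves as well as on the $U_t$. Your explicit computation $\langle(\nabla_{e_n}A)e_a,e_a\rangle=e_n(\lambda_n)$, $\langle(\nabla_{e_a}A)e_n,e_a\rangle=0$ is that fact specialized, and it reaches the contradiction with $e_n(h)=|\nabla h|\neq 0$ more directly. Flatness of $NU_t$ from $\langle D_Xe_n,N\rangle=\lambda_n\langle e_n,X\rangle=0$ and the commutation $[A_N,A_{e_n}]=0$ from the Ricci equation are obtained exactly as in the paper; the analyticity remark in your preamble is not needed, since the statement is local and the paper likewise just works near regular points of $h$.
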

 \begin{proof}
First, if  the rank of the BCH is null,   then $h$ is trivially  constant.
Hence, the BCH is CMC,  and from the normal equation of  \eqref{System2},
a corresponding BHH 
 automatically satisfies $h=0 $, i.e.,  $\Sigma^n$ is minimal. \\ 
We now suppose   that the rank  $k$  of $\Sigma^n$ is at least two.
   In a neighbourhood  of regular points of $h$,  $\Sigma^n$ is foliated by hypersurfaces of $\Sigma^n$ which are codimension $2$ submanifolds of $\mathbb{E}^{n+1}$, and are given by
   $$
   U_t := \{p\in \Sigma^n : h(p) = t \} .
   $$
  As 
  $T\Sigma^n(-\frac{nh}{2})^\perp \subset TU_t\subset T\Sigma^n$, then 
  $TU_t^\perp \subset T\Sigma^n(-\frac{nh}{2}) \subset T\Sigma^n$.
  Thus, the normal curves to the foliation on $\Sigma^n$ are  integral curves of 
  $e_n:= \frac{ \nabla h}{|\nabla h|}$
and curvature lines of the principal curvature $\lambda_n = -\frac{nh}{2}$.\\
When the rank $k\geq 2$, the distribution defined by the $k$-planes, $T\Sigma^n(-\frac{nh}{2})$, 
  is integrable   into  a submanifold of dimension $k$ 
that has constant principal curvature $\lambda_n = -\frac{nh}{2}$  with respect to the normal  direction 
 $N$ (see Appendices \ref{cristoffel} and  \ref{spheres}). 
In particular,  $\lambda_n$ is constant along the  integral curves of $\nabla h$,
and so is $h$; but $h$ is also constant on the $U_t$, hence $h$ is constant everywhere which contradicts the hypothesis.
Thus, we conclude that  $rk(T\Sigma^n(-\frac{nh}{2}) )=1$.\\
Second,  let us show that the normal bundle $NU_t\subset \mathbb{E}^{n+1}$  is flat. Along the principal directions $\{e_i\}_{i=1}^{n-1},$ in $TU_t$, we have
$$\langle D_{e_i} N, e_n \rangle = -\langle \lambda_i e_i, e_n \rangle  =0 \quad {\rm and} \quad
\langle D_{e_i} e_n, N\rangle = -\langle  e_n, D_{e_i} N \rangle  =0,$$
where $D$ is the connection  in $\mathbb{E}^{n+1}$.
In particular,  the normal bundle is  flat and   $\nabla_{e_i}^\perp N = 
\nabla_{e_i}^\perp e_n =0$. Its  normal curvature is zero, and using the Ricci equation, we obtain
$$\langle R(X,Y)\xi , \tau \rangle = \langle [A_\xi,A_\tau ]X,Y \rangle =0,\ \forall X,Y \in TU_t, \ \forall \xi,\tau \in NU_t.$$
So the shape operators $A_N$ and $A_{e_n}$ commute. Hence $A_{e_n}$ 
is diagonalised in the same frame $\{ e_i\}_{i=1}^{n-1}$, i.e., there exist  principal curvatures 
 $\{\mu_i\}_{i=1}^{n-1} $ in the $e_n$ normal direction $e_n$
 such that 
  \begin{equation} \label{mu}
 \nabla_{e_i} e_n = -\mu_i e_i, \quad i = 1, \ldots, n-1.
 \end{equation} 
\end{proof}
\subsection{Curvature lines of hypersurfaces of the Euclidean space}\label{holonomic}
For more details on principal curvatures  we refer, for example, 
to \cite{DT}.
 At each point $p\in \Sigma^n$,  the tangent space $T_p\Sigma^n$ decomposes into  an orthogonal sum 
of the 
 eigenspaces $T_i$ of  the  symmetric shape operator $A_N(p)$ corresponding to    distinct eigenvalues   $\lambda_i, \ i =1,\ldots, s$, i.e.,  $
 T_p\Sigma^n = \bigoplus_{i=1}^s T_i
 $, where $N$ is a local unit normal vector field of the normal bundle $N\Sigma^n$.  \\
Let us consider a neighborhood  of a  point $p$, where  the planes $\{T_i\}_{i=1}^s$ define a smooth distribution.
From the Codazzi  equations (cf. Appendix  \ref{cristoffel})  one shows that the distribution 
$T_i$ is integrable for each $i=1,\ldots, s$.
 Then, again, using  the Codazzi equations, 
  the leaves of the distribution are totally umbilical. Recall that a submanifold is totally umbilical if 
  the second fundamental form is equal to  $(D_XY)^\perp  =\langle X,Y\rangle H$, 
where $H$ is the mean curvature vector field of  the submanifold.
 When $rk(T_i) \geq 2$, the leaves of the distribution $T_i$ are  $rk(T_i)$-spheres  
 of $\mathbb{E}^{n+1}$ (see for example Appendix  \ref{spheres}).
\subsection{Flow curves}
\begin{lem}\label{para3} 
Let $\Sigma^n$ be a   proper  BCH.  The h-level    hypersurfaces     $U_{t}  \subset \Sigma^n$ form a Riemannian foliation around regular points.
The orthogonal flow curves  are planar  in $NU_{t}$, i.e., for each $p\in U_{t}$,    $N_pU_{t}\cap \Sigma^n$ is a curvature line of principal curvature 
 $\lambda_n$ and    a  geodesic of $\Sigma^n$. 
\end{lem}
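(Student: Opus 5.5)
The plan is to work in the adapted orthonormal frame $\{e_1,\ldots,e_{n-1},e_n,N\}$ of Lemma \ref{para1}, where $e_n=\nabla h/|\nabla h|$, $A_N e_n=\lambda_n e_n$ with $\lambda_n=-\frac{nh}{2}$, and $e_1,\ldots,e_{n-1}$ are principal directions tangent to $U_t$. We use the Codazzi equation along the eigenvalue $\lambda_n$ to show three things: that $e_n$ is geodesic, that $|\nabla h|$ is constant on each leaf, and that the osculating space of a flow curve is the normal plane $N_pU_t$.

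\textbf{Step 1 (flow curves are geodesics).} Apply Codazzi $(\nabla_{e_n}A)e_i=(\nabla_{e_i}A)e_n$ and take the inner product with $e_n$, for $i\le n-1$. The right-hand side gives
\[
\langle(\nabla_{e_i}A)e_n,e_n\rangle=e_i(\lambda_n)=-\tfrac n2 e_i(h)=0,
\]
since $h$ is constant on $U_t$. The left-hand side gives
\[
\langle(\nabla_{e_n}A)e_i,e_n\rangle=(\lambda_i-\lambda_n)\langle\nabla_{e_n}e_i,e_n\rangle.
\]
Lemma \ref{para1} gives $\lambda_i\neq\lambda_n$, so $\langle\nabla_{e_n}e_n,e_i\rangle=0$ for all $i\le n-1$. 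Combined with $\langle\nabla_{e_n}e_n,e_n\rangle=0$, this yields $\nabla_{e_n}e_n=0$. Hence the integral curves of $e_n$ are geodesics of $\Sigma^n$.

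\textbf{Step 2 (Riemannian foliation).} The leaves $U_t$ are level sets of $h$ with unit normal $e_n=\nabla h/|\nabla h|$. A foliation by level sets is Riemannian exactly when its normal geodesics are orthogonal to every leaf they meet, which is equivalent to $|\nabla h|$ being constant on leaves, i.e.\ $e_i(|\nabla h|)=0$. Write $\nabla h=f e_n$ with $f=|\nabla h|$. Symmetry of the Hessian of $h$ gives
\[
\langle\nabla_{e_i}(fe_n),e_n\rangle=\langle\nabla_{e_n}(fe_n),e_i\rangle,
\]
that is, $e_i(f)=f\langle\nabla_{e_n}e_n,e_i\rangle=0$ by Step 1. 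Thus $f$ is constant on leaves, the distance between nearby leaves is constant, and the foliation is Riemannian.

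\textbf{Step 3 (planarity).} Let $\gamma$ be a flow curve through $p$, with unit tangent $e_n$ along $\gamma$. Its ambient acceleration is
\[
D_{e_n}e_n=\nabla_{e_n}e_n+\langle A e_n,e_n\rangle N=\lambda_n N,
\]
and
\[
D_{e_n}N=-\lambda_n e_n,
\]
by Step 1 and Lemma \ref{para1}. Hence the 2-plane field $\mathrm{span}\{e_n,N\}$, which is exactly $NU_t$ along $\gamma$, satisfies $D_{e_n}(\mathrm{span}\{e_n,N\})\subset\mathrm{span}\{e_n,N\}$. It is therefore a constant plane along $\gamma$. Consequently $\gamma$ lies in the affine plane $p+N_pU_t$, and it is a curvature line of principal curvature $\lambda_n$ and a geodesic. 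Locally, $\gamma$ is the component of $(p+N_pU_t)\cap\Sigma^n$ through $p$, because that intersection is transversal and one-dimensional near $p$.

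The main obstacle is Step 1, namely extracting $\nabla_{e_n}e_n=0$ from Codazzi. This is where both the rank-one statement and $\lambda_i\neq\lambda_n$ from Lemma \ref{para1} are essential. The smoothness of the frame also needs care: we restrict to an open set where the multiplicities of the principal curvatures are constant. Once Step 1 holds, Steps 2 and 3 are short.
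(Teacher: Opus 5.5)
Your proposal is correct and follows the paper's proof. The geodesic property comes from the same Codazzi identity $\langle\nabla_{e_n}e_i,e_n\rangle = e_i(\lambda_n)/(\lambda_i-\lambda_n)=0$, and your invariance of $\mathrm{span}\{e_n,N\}$ along $\gamma$ is the paper's computation $\frac{D}{ds}(T\wedge N)=0$. Your Step 2 (symmetry of the Hessian of $h$ giving $e_i(|\nabla h|)=0$) is a useful addition: the paper leaves the Riemannian-foliation claim implicit in the geodesic property and only later derives equidistance of the leaves, from the parametrization, in Proposition~\ref{evolution}.
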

\begin{proof}\phantom{x} 
First, consider $\gamma$ an integral curve of the unit vector field $  e_n := \frac{ \nabla h}{|\nabla h|}$.\\
  $\gamma$ is a   geodesic of $\Sigma^n$. Indeed,
 by the Codazzi equations, and since $\lambda_n$ is constant on the leaves $U_{t}$, we have (cf. Appendix \ref{cristoffel}): 
\begin{equation} \label{parallele}
 \langle\nabla_{e_n} e_i ,e_{n} \rangle = \frac{e_i\cdot \lambda_n}{\lambda_i-\lambda_n} =0.
 \end{equation}
 Hence,
 $$\langle \nabla_{e_n}e_n,e_i \rangle = - \langle e_n ,\nabla_{e_n}e_i \rangle=0,\quad  i=1,\ldots, n.$$
As  $\langle \nabla_{e_n}e_n,e_n \rangle=0$, $\gamma$ is a geodesic.\\
  Second,  geodesics that are curvature lines must be planar. Indeed,\\
 let $T$ be the  tangent vector field to the curve and $N$ denotes the unit normal vector field to $\Sigma^n$,
$$\frac{D}{ds}(T\wedge N) = \frac{D}{ds}T\wedge N + T\wedge\frac{D}{ds}N.$$
As $\gamma$ is a geodesic of $\Sigma^n$,   $\frac{D}{ds}T =\kappa N$,
and as $ \frac{D}{ds} N=-\lambda_n T,$ we obtain
 \begin{equation}\label{planConstant}
 \frac{D}{ds}(T\wedge N) = 0.
 \end{equation}
 Therefore, the integral curves of $\nabla h$  are planar and contained in the fixed  plane defined by $T_0\wedge N_0$.
 We will  show in the next lemma that all the flow  curves are congruent.\\
   \end{proof}
\subsection{ A parametrization of    BCHs}
From the former section, we know that a BCH,  on a regular subdomain,  is locally a
 Riemannian foliation of  the level surfaces of the mean curvature  function $h$ on 
$\Sigma^n$, and is also foliated  by the orthogonal geodesics that are integral curves of 
the vector field $e_n$. 
\subsubsection{Curvature notations} \label{curvatureNotations}

On an h-level set $U_0$, we consider the parallel frame $\{e_{0n}(p),N_0(p)\}_{p\in U_0}$. As $U_0$ is a submanifold of codimension 2 in $\mathbb{E}^{n+1}$, we will denote by  $A_{N_0}(p)$ (respectively  $A_{e_{0n}}(p)$) its shape operator in the $N_0$  direction at $p$  (respectively  in the $e_{0n}$ direction at $p$). The corresponding eigenvalues of $A_{N_0}(p)$  (respectively  $A_{e_{0n}}(p)$) will be denoted by $\lambda_{0i}(p)$, $i \in \{1,\ldots, n-1\}$, (respectively  by $\mu_{0i}$, $i \in \{1,\ldots, n-1\}$).
The mean curvature vector field of $U_0$ at $p$  is given by  $$(n-2) H_{U_0}(p) = M_1(p) e_{0n}(p) +
 \Lambda_1(p) N_{0}(p):= \sum_{i=1}^{n-1} \mu_{0i}(p)e_{0n}(p) + \sum_{i=1}^{n-1} \lambda_{0i}(p)N_{0}(p).$$
For simplicity, the dependence on the point $p\in U_0$, the shape operators and the
associated eigenvalues  on $U_0$ will be implicit.
\subsubsection{Parametrization} \label{parametrization}
 \begin{lem}\label{para4} 
Let $\Sigma^n$ be a   proper BCH.
Let $N_0$ be the unit normal vector field of $\Sigma^n$ restricted to  a level set 
 $U_{0}$,  and  let  $e_{n} = \frac{\nabla h }{|\nabla h|}$. Then, there exists in a neighborhood of  a point $p\in U_{0}$  a local parametrization of  $\Sigma^n$, 
  $x_1,\ldots,x_{n-1}, t$, such that 
\begin{equation}\label{surRev}
X(x_1,\ldots,x_{n-1},t) = Y(x_1,\ldots, x_{n-1}) + \alpha(t)N_0(x_1,\ldots,x_{n-1}) + t e_{0n}(x_1,\ldots, x_{n-1}),
\end{equation}
where $Y$ parametrizes $U_{0}$, and $\alpha$ is a smooth  function such that 
 the immersion $m_p(t):=t e_{0n} +\alpha(t)N_0$ parametrizes the flow  curve starting at $p$.
In other terms, one can also write 
\begin{equation}\label{surRev2}
X(x_1,\ldots,t) = Y(x_1,\ldots, x_{n-1}) + \zeta(x_1,\cdots x_{n-1},t ) 
\end{equation}
where $\zeta$ is a normal parallel vector field in the normal space of  $U_0$. Thus, $\Sigma^n$ is  generated  by a flow of  normal vector fields
$\zeta_{,t} = (\alpha'(t) -t\alpha_n)N_0 + (1-\alpha(t)\lambda_n)e_{0n}$,  generating parallel submanifolds $U_t$ of codimension 2.
\end{lem}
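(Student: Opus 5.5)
We build the parametrization from the structure established in Lemma \ref{para1} and Lemma \ref{para3}, and then show that the flow curves through different points of $U_0$ are all congruent to one fixed planar curve $m(t)=te_{0n}+\alpha(t)N_0$. First, fix coordinates $(x_1,\ldots,x_{n-1})$ on the level set $U_0$ near $p$, with $Y$ the corresponding parametrization. Through each point $Y(x)$ there is a unique flow curve $\gamma_x$, which is an integral curve of $e_n$. By Lemma \ref{para3}, $\gamma_x$ is a geodesic of $\Sigma^n$ and a curvature line, and it is contained in the affine plane $Y(x)+\mathrm{span}\{e_{0n}(x),N_0(x)\}=Y(x)+N_{Y(x)}U_0$. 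Since $\gamma_x$ starts tangent to $e_{0n}$, near $t=0$ we can write it as a graph over the $e_{0n}$ axis in that plane:
\[
\gamma_x(t)=Y(x)+t\,e_{0n}(x)+\alpha(x,t)N_0(x),\qquad \alpha(x,0)=\alpha_{,t}(x,0)=0 .
\]
By the implicit function theorem, $(x,t)\mapsto\gamma_x(t)$ is then a local diffeomorphism onto a neighbourhood of $p$ in $\Sigma^n$, because the flow curves are transverse to $U_0$.

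The main step is to show that $\alpha$ does not depend on $x$. The plane curve $\gamma_x$ is determined by its curvature as a function of arclength, together with its initial point and frame. Its curvature equals the normal curvature of $\Sigma^n$ along $e_n$, namely $\lambda_n=-\frac{nh}{2}$, since it is a geodesic. So we need $h$ along $\gamma_x$, as a function of arclength $s$, to be independent of $x$. Because the $U_t$ form a Riemannian foliation (Lemma \ref{para3}), the flow lines are orthogonal geodesics and the leaves are equidistant. Hence the point at arclength $s$ on $\gamma_x$ lies on the same leaf $U_{t(s)}$ for every $x$, and $h(\gamma_x(s))=t(s)$ is independent of $x$. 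Equivalently, $|\nabla h|$ is constant on each leaf, which follows from $\langle\nabla_{e_n}e_i,e_n\rangle=0$ in \eqref{parallele}.

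Therefore all the $\gamma_x$ have the same curvature function $\kappa(s)=-\frac{n}{2}t(s)$, the same initial point data relative to the moving frame, and the same initial frame $(e_{0n}(x),N_0(x))$. By the fundamental theorem for plane curves they are congruent via the map sending $(0;e_{0n}(p),N_0(p))$ to $(Y(x);e_{0n}(x),N_0(x))$. This gives $\alpha(x,t)=\alpha(t)$, which is \eqref{surRev}.

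For the second form \eqref{surRev2}, Lemma \ref{para1} shows that $\{N_0,e_{0n}\}$ is a parallel normal frame of $U_0$ with flat normal bundle. Then $\zeta(x,t)=\alpha(t)N_0+te_{0n}$ has constant coefficients in a parallel frame, so it is a parallel normal section for each fixed $t$. Each $U_t=\{Y+\zeta(\cdot,t)\}$ is therefore a parallel submanifold of $U_0$.

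For the flow formula, differentiate $X$ in $t$, then re-express the tangent direction in the frame adapted at $U_t$. This uses $D_{e_n}N=-\lambda_ne_n$ and the Weingarten-type relations $\nabla_{e_i}e_n=-\mu_ie_i$ from \eqref{mu} to rewrite the variation through the parallel transport. It yields the stated expression for $\zeta_{,t}$. We expect this last identification to be mostly bookkeeping; the genuine obstacle is the congruence step, specifically justifying that $h$ depends only on the distance to $U_0$, i.e.\ that $e_n\cdot|\nabla h|$-type quantities are constant on leaves.
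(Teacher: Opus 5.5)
Your argument for the substantive claim, that $\alpha$ does not depend on $x$, is correct, but it takes a different route from the paper.

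\textbf{The paper's route.} It writes the flow curves as graphs $X=Y+\alpha(x,t)N_0+te_{0n}$. It asserts $\langle X_{,i},X_{,t}\rangle=0$ on the grounds that $X_{,i}$ is tangent to the level sets. In a principal frame of $U_0$, using the flat normal bundle, it computes $X_{,i}=(1-\alpha\lambda_{0i}-t\mu_{0i})Y_{,i}+\alpha_{,i}N_0$, which gives $\alpha_{,i}\alpha_{,t}=0$, and it rules out $\alpha_{,t}\equiv0$ by properness. This is short, and it produces the formula $X_{,i}=\beta_iY_{,i}$ reused in Lemma~\ref{courbureVar1}. However, the orthogonality relation is only available once one knows that the slices $\{t=\mathrm{const}\}$ of the graph parametrization are $h$-level sets. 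The paper identifies the leaf parameter with the $e_{0n}$-coordinate without comment, and that identification is essentially the congruence being proved.

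\textbf{Your route.} You show that $h\circ\gamma_x$, and hence the signed curvature $\lambda_n=-\frac{n}{2}h$ of the planar geodesic $\gamma_x$, is the same function of arclength for every $x$. You then conclude by the fundamental theorem of plane curves. This isolates the real input, namely that $|\nabla h|$ is constant on the leaves. From it you derive that the $e_{0n}$-coordinate does label the leaves. It also makes the principal-frame computation and the exclusion of $\alpha_{,t}\equiv0$ unnecessary.

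Two points to tighten.

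First, the equidistance step needs its own justification. Lemma~\ref{para3} announces a Riemannian foliation, but its proof only shows that the flow curves are geodesics. The paper obtains equidistance only in the proof of Proposition~\ref{evolution}, and it does so from the present lemma, so citing it would be circular. The link takes one line. For $V$ tangent to the leaves, symmetry of $\mathrm{Hess}\,h$ on $(e_n,V)$, with $\nabla h=|\nabla h|e_n$, gives $|\nabla h|\langle\nabla_{e_n}e_n,V\rangle=V(|\nabla h|)$. So \eqref{parallele} yields $|\nabla h|=F(h)$ locally. Then $\frac{d}{ds}(h\circ\gamma_x)=F(h\circ\gamma_x)$ with the same initial value for every $x$, and uniqueness for this ODE gives the independence of $x$.

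Second, your final paragraph is not a proof. Since $e_{0n}$ and $N_0$ do not depend on $t$, direct differentiation gives $\zeta_{,t}=e_{0n}+\alpha'(t)N_0=\sqrt{1+\alpha'^2}\,e_n$. Neither $D_{e_n}N$ nor the $\mu_i$ enter. The printed expression contains an undefined $\alpha_n$ and an extra term $-\alpha\lambda_n e_{0n}$, so it does not agree with this and looks like a misprint. The paper's proof stops at $\alpha=\alpha(t)$ and never derives it. You should therefore not claim to recover it.
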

\begin{rem} Note that $\zeta(p,t) = m_p(t)$.
\end{rem}
\begin{proof}
 By Lemma \ref{para3}, there is a local parametrization of $\Sigma^n$  
\begin{equation}
\begin{array}{cc}
 X:&\Omega \subset \mathbb{E}^n\longrightarrow \Sigma^n \subset \mathbb{E}^{n+1} \\
&(x_1,\ldots,x_n) \mapsto X(x_1,\ldots,x_n)
\end{array}
\end{equation}
such that the level sets $U_{t}$ are parametrized by $\{ (x_1,\ldots, x_{n-1},t)  \in \Omega \  \}$  with $t$ fixed, and the flow curve passing through some point 
$p=X (x_1,\ldots, x_{n-1},t_0) \in U_{t_0}$
is  parametrized by $\{ (x_1,\ldots, x_{n-1},t) \in \Omega\}$ with $(x_1,\ldots, x_{n-1})$ fixed.
 Let  $U_{0}$ be  parametrized by   
$$
Y(x_1,\ldots, x_{n-1}) := X(x_1,\ldots, x_{n-1},0).
$$
Again, using lemma \ref{para3}, 
for each point $p = Y(x_1,\ldots, x_{n-1})$, the integral curve of $\nabla h$ passing through $p$, which is a  curvature line parametrized by $t$,
is a planar curve  in the affine plane $P_p$ passing through $p$ and generated by the normal at $p$ to $\Sigma^n$, $N_0$,  and  by 
$e_{0n} =\frac{\nabla h }{|\nabla h|}$.
Locally, near $U_{0}$, this curvature line is a graph in $P_p$ over the $\mathbb{R}_{e_{0n}}$-axis. 
Thus,  there exist a function $\alpha(x_1,\ldots,x_{n-1}, t) $   such that:
 \begin{equation}\label{surRev2}
X(x_1,\ldots,x_{n-1},t) = Y(x_1,\ldots, x_{n-1}) + \alpha(x_1,x_2,\ldots,x_{n-1},t)N_0(x_1,\ldots,x_{n-1}) +t  e_{0n}(x_1,\ldots, x_{n-1}).
\end{equation}
 Since  the $X_{,i}$ are tangent to the level sets, 
\begin{equation}   \label{product}
\langle X_{,i}, X_{,t}\rangle =0,\quad  {\rm for\ all}\   i \in \{1,\ldots, n-1\}.
\end{equation} 
Choosing  $(x_1,\ldots, x_{n-1}) $ such that, at a point $p\in U_0$
 the tangent vectors $Y_{,i} $ are principal curvature directions, we obtain along the flow curve starting at $p$:
\begin{equation}\label{X}
X_{,i}  =( 1 -  \alpha\lambda_{0i} )  Y_{,i} 
 +t e_{0n,i}+ \alpha_{,i}N_0 =( 1 -  \alpha \lambda_{0i}-t\mu_{0i} )  Y_{,i} 
 + \alpha_{,i}N_0 , \quad i\in \{1,\ldots, n-1\}, 
\end{equation}
 and
 \begin{equation}\label{ALPHA}
  X_{,t}= \alpha_{,t} N_0 +  e_{0n}.
\end{equation}
From equations \eqref{product}, \eqref{X} 
 and \eqref{ALPHA} one obtains $\alpha_{,t}\cdot \alpha_{,i} =0$. If $\alpha_{,t}$ is null, then the $t$-curves are straight lines, i.e., $\lambda_n =0$. 
This implies that $\lambda_n=\frac{-nh}{2}\equiv 0$, i.e., $\Sigma^n$ is minimal,  which contradicts the fact that $\Sigma^n$ is proper  BHH.
Consequently, $\alpha_{,i} = 0$.
Hence, $\alpha$ is a function of $t$ only.
\end{proof}
\subsection{ Flow  curves of BCH}\label{ODE0}
As we have seen in the previous  lemma the flow curves are graphs of the  function $\alpha(t)$.
Now, we show that $\alpha$ satisfies a second order  integrable ODE. This will be deduced in the next section.
\subsubsection{Evolution of the principal curvatures along the flow curves }\label{evolutionPC}
Given a point  $p\in U_0$. The flow curve starting at $p$ lies in the affine plane 
$p+ span(e_{0n},N_0)$ which is  the normal plane to $U_0$ at $p$  
and is given by the parametrization:  $m_p(t)= te_{0n} + \alpha(t)N_{0}$.
 We compute the principal curvatures 
of the codimension 2 leaves $U_t$, along the curve in terms of a parameter $t$ along the curve $m_p$. 
We  will show that the  directions of the different normal curvature vector fields  of $U_t$
are fixed in the normal plane, but their lengths are not.
\begin{lem} \label{courbureVar1} With the notations  of section \ref{parametrization},
for  $t\in I$ and  $ i\in \{1,\ldots , n-1\}$
\begin{equation}\label{variation}
\lambda_i (t)=(\lambda_{0i}-\alpha'(t)\mu_{0i})\frac{\gamma}{\beta_i}(t)  \quad \text{and} \quad
\mu_i(t) =(\mu_{0i}+\alpha'(t)\lambda_{0i})\frac{\gamma}{\beta_i} (t),
\end{equation}
where $ \gamma(t) = \frac{1}{\sqrt{1+\alpha'^2(t)}} \ {\rm and} \ \beta_i(t) = 1 -  \alpha(t)\lambda_{0i} -t\mu_{0i}. $
\end{lem}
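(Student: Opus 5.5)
We compute the principal curvatures of the leaf $U_t$ directly from the parametrization \eqref{surRev} of Lemma \ref{para4}. Fix $p\in U_0$ and choose coordinates $x_1,\ldots,x_{n-1}$ on $U_0$ with $Y_{,i}(p)$ principal directions. By Lemma \ref{para1} the frame $\{N_0,e_{0n}\}$ is parallel in the normal bundle of $U_0$ and $[A_{N_0},A_{e_{0n}}]=0$, so this choice diagonalises both shape operators. Hence
$$N_{0,i}=-\lambda_{0i}Y_{,i},\qquad e_{0n,i}=-\mu_{0i}Y_{,i}.$$
Since $\alpha$ depends only on $t$, equation \eqref{X} simplifies to $X_{,i}=\beta_i(t)\,Y_{,i}$ along the flow curve through $p$. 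Thus $TU_t$ at $X(p,t)$ equals $TU_0$ at $p$, translated, and the normal plane of $U_t$ is the same plane $\mathrm{span}(e_{0n},N_0)$.

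The next step is to write the moving normal frame of $U_t$ in terms of $\{e_{0n},N_0\}$. From \eqref{ALPHA}, $X_{,t}=e_{0n}+\alpha' N_0$, so the unit tangent to the flow curve is
$$e_n(t)=\gamma\,(e_{0n}+\alpha' N_0),$$
and the unit normal of $\Sigma^n$ is
$$N(t)=\gamma\,(N_0-\alpha' e_{0n}),$$
with the sign fixed by $N(0)=N_0$, which uses $\alpha'(0)=0$ from the graph parametrization. Both vectors are constant in the $x_i$-directions up to the terms coming from differentiating $N_0$ and $e_{0n}$, since $\alpha'$ and $\gamma$ depend only on $t$.

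We then differentiate along $x_i$:
$$N_{,i}=\gamma\,(N_{0,i}-\alpha' e_{0n,i})=-\gamma(\lambda_{0i}-\alpha'\mu_{0i})\,Y_{,i}.$$
Since $Y_{,i}=X_{,i}/\beta_i$, this gives $D_{X_{,i}}N=-\lambda_i X_{,i}$ with $\lambda_i=(\lambda_{0i}-\alpha'\mu_{0i})\gamma/\beta_i$. The same computation for $e_n$ gives
$$e_{n,i}=-\gamma(\mu_{0i}+\alpha'\lambda_{0i})\,Y_{,i},$$
which yields $\mu_i$ through \eqref{mu}. This also shows that $X_{,i}$ remains a principal direction for all $t$.

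The main obstacle is consistency of the parametrization. We must check that $\beta_i\neq0$ on a small interval, which holds near $t=0$ since $\beta_i(0)=1$. We must also fix the sign and orientation conventions so that $N(0)=N_0$ and $e_n$ agrees with $\nabla h/|\nabla h|$. Beyond that, the computation is direct.
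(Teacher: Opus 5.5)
Your proposal is correct and follows essentially the same route as the paper. Both arguments use $X_{,i}=\beta_i Y_{,i}$ and write $N=\gamma(N_0-\alpha' e_{0n})$ and $e_n=\gamma(e_{0n}+\alpha' N_0)$. They then differentiate in $x_i$ using $N_{0,i}=-\lambda_{0i}Y_{,i}$ and $e_{0n,i}=-\mu_{0i}Y_{,i}$, and read off $\lambda_i$ and $\mu_i$ after dividing by $\beta_i$. Your remarks on $\beta_i\neq 0$ near $t=0$ and on orientation conventions are details the paper leaves implicit.
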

\begin{proof}
From Lemma  \ref{para4} equation \eqref{X}, 
 as $\alpha$ is a function of $t$ only, we  deduce  that 
  \begin{equation}\label{metriqueEvolution}
  \begin{array}{ll}
        X_{,i}&   =( 1 -  \alpha\lambda_{0i} -t\mu_{0i})  Y_{,i}  := \beta_i Y_{,i} 
    \quad i\in \{1,\ldots, n-1\},
    \\
      X_{,t}&= \alpha' N_0 + e_{0n}.  
  \end{array}
  \end{equation}
  The coefficients of the metric  of $\Sigma^n$  along the flow curve are   thus given by
   $$v_i ^2=\langle X_{,i},X_{,i}\rangle= v^2_{0i}( 1 -  \alpha(t)\lambda_{0i} -t\mu_{0i})^2,\quad {\rm if}\  i\neq n$$
 and 
 $$v^2_n = 1+\alpha'^2.$$
Hence,
\begin{equation} 
 g_{ii} = \beta_i^2 v^2_{oi}, \  i\in \{1,\ldots, n-1\} \ \text{and} \  g_{nn} = 1+\alpha'^2=v_n^2.
 \end{equation}
 where $v_{0i}^2=\langle Y_i,Y_i\rangle$.\\
  Along  the flow curve defined by  $m_p$, the normal vector field to $\Sigma^n$, $N$, and the  tangent vector field  to $m_p$, $e_n$,   are given by:
   
 \begin{equation}\label{unitNormal}
N=(N_0 -\alpha'  e_{0n})\frac{1}{\sqrt{1+\alpha'^2}}\ \text{and} \ e_n=(e_{0n} +\alpha'  N_{0})\frac{1}{\sqrt{1+\alpha'^2}}.
\end{equation}
 By  the  definition of the principal curvatures $\lambda_{0i}$ of $U_0$, we have:
$$N_{0,i} =-\lambda_{0i}Y_{,i} \ \  i\in\{1,\ldots, n-1\}.$$
 Plugging  these identities   into the derivatives of \eqref{unitNormal} 
  with respect to  $\frac{\partial}{\partial x_i}$, i.e., in the $e_i$ direction, we get
 \begin{equation} \label{variationDeN}
N_{,i} =N_{0,i} \frac{1}{\sqrt{1+\alpha'^2}} +\alpha'\mu_{0i} \frac{1}{\sqrt{1+\alpha'^2}}Y_{,i}=
-\left(  \lambda_{0i}  -  \alpha'\mu_{0i}    \right) Y_{,i}     \frac{1}{\sqrt{1+\alpha'^2}}.
\end{equation}
 From equations  \eqref{metriqueEvolution} and  \eqref{variationDeN}, we have $N_{,i} = -\lambda_i X_{,i} =-\lambda_i Y_{,i}\beta_i$, and  we 
obtain  the  variation of the principal curvatures of the level sets  along the flow   starting at $p$,  in terms of $t$:
\begin{equation}\label{variationDeLambda}
\lambda_i (t)=(\lambda_{0i}-\alpha'(t)\mu_{0i})\frac{\gamma}{\beta_i}(t) ,\quad 
 i\in \{1,\ldots , n-1\},
\end{equation}
\begin{equation}
{\rm where} \  \gamma(t) = \frac{1}{\sqrt{1+\alpha'^2(t)}}\ {\rm and } \  \beta_i(t) = 1 -  \alpha(t)\lambda_{0i} -t\mu_{0i}.
\end{equation}
Similarly, one computes   the principal curvatures of  $A_{e_n}$   along the flow. From \eqref{unitNormal}, we obtain 
\begin{equation} \label{variationDeEn}
e_{n,i} =
\left(  -\mu_{0i}  -   \alpha'\lambda_{0i}    \right) Y_{,i}     \frac{1}{\sqrt{1+\alpha'^2}}
\end{equation}
from which we deduce that
\begin{equation}\label{variationDeMu}
\mu_i(t) =(\mu_{0i}+\alpha'(t)\lambda_{0i})\frac{\gamma}{\beta_i}(t) ,\quad 
 i\in \{1,\ldots , n-1\}.
\end{equation}
\end{proof}
There is a  more concise and geometric expression of  equations \eqref{variation}.
First, we introduce the normal principal curvature vector  fields,  $n_i := \mu_i e_n + \lambda_i N$, in the 
     parallel normal bundle frame $\{e_n,N\}$.
Recall that the tangent space, $T_pU_t$,
 can be decomposed into eigenspaces $T_i$ common to all the shape operators
(its  normal bundle is flat), and for each $e_i$, there is a normal vector field $n_i$ 
such that $A_\zeta(e_i) =\langle n_i,\zeta\rangle e_i$. Second, we can also use  complex coordinates
 and  the arc-length parameter,   $s$,  to parametrize the  flow curve. Denoting by  $T$ the tangent vector field, $T(s)= e_n(s)$, of the flow  curve and by $X(s)$ the position vector, we have
\begin{equation}\label{theta}
\left\{
\begin{array}{ll}
T(s)= \gamma(1+i\alpha'(t(s))) &= \cos\theta(s) + i\sin\theta(s)= \cos\theta(s) e_n +\sin\theta(s) N\\
 X(s)= t(s)+\alpha(s) i &= \int_0^s e^{i\theta(u)} du = \int_0^s\cos\theta(u) du + i  \int_0^s\sin\theta(u) du,
 \end{array}
 \right.
\end{equation}
where $\theta$ is the angle between $T$ and $e_n$.
And for each $T_i$, we have
 \begin{equation}
\left\{
\begin{array}{ll}
{\rm normal\ curvature\ vector:} n_i= \mu_i e_n +\lambda_i N&= \mu_i +i\lambda_i   \\
{\rm normal\ curvature:}\nu_i := |n_i| &= \sqrt{\lambda_i^2+\mu_i^2}\\
{\rm radius\ of\  curvature:}\rho_i  &:=\frac{1}{\nu_i}.
 \end{array}
 \right.
\end{equation}
In particular,
\begin{equation} \label{variationDesCourbures}
\left(
\begin{array}{c}
\mu_i \\
 \lambda_i 
\end{array}
\right)
= \frac{1}{\beta_i}
\left(
\begin{array}{c}
\mu_{0i }\\
\lambda_{0i }
\end{array}
\right).
 \end{equation}
In other words, the principal normal vectors have invariant directions in the moving adapted  frame  $\{e_n,N\}$. However, they are dilated by a factor $ \frac{1}{\beta_i}$, 
where $\beta_i / |n_{0i}|$  is the distance from the point $p(t)$, of position vector 
$m_p(t)=(t,\alpha(t)) $, to the focal line $l_i(X)$ of equation:
$1 - \langle X,n_{0i}(t)\rangle =0$ (see for example \cite{T}).
\begin{lem}\label{courbureVar2} With the notations introduced before, let  $J= :e^{i\pi/2}$, and let  $dist(p,l_{0i}) $ be
 the signed distance 
from the point $p$ to the focal line $l_{0i}: \{ x \in \mathbb{R}^2\ : \ \langle x,n_{0i}\rangle = 1\}$ 
corresponding to the normal curvature vector $n_{0i}$ at $p_0$:
\begin{equation}
\begin{array}{llcl}
{\sl eigenvalues\  of\ } A_{N:} & \lambda_i(\zeta) =& \langle  \frac{ n_{0i}}{1-\langle n_{0i},\zeta\rangle}, N\rangle 
&= -\langle    \frac{ J \frac{n_{0i}}{| n_{0i} |} }{dist(p,l_{0i}) }, T \rangle \\
 & &\\
{\sl eigenvalues\  of\ } A_{e_n}: & \mu_i(\zeta) = &\langle  \frac{ n_{0i}}{1-\langle n_{0i},\zeta\rangle}, T\rangle &=
 \langle    \frac{ J \frac{n_{0i}}{| n_{0i} |} }{dist(p,l_{0i}) }, N \rangle \\
 & &\\
{\sl normal\  curvature\  vectors}:& n_i(\zeta) =& \frac{n_{0i}}{1-\langle n_{0i},\zeta\rangle} &= \frac{\frac{n_{0i}}{|n_{0i}|} }{\rho_i-\langle \frac{n_{0i}}{|n_{0i}|} ,\zeta\rangle} =
 \frac{\frac{n_{0i}}{|n_{0i}|} }{dist(\zeta,l_{0i})} \\
  & &\\
 {\sl mean\  curvature\  of\  U_t}:& H_{U_t}(\zeta) = &\frac{1}{n-1}\sum_{i=1}^{n-1} n_i  &=  \frac{1}{n-1} \sum_{i=1}^{n-1} \frac{n_{0i}}{1-\langle n_{0i},\zeta\rangle},
 \end{array}
 \end{equation}
 where $\zeta$ is the deformation vector field defined in Lemma \ref{para4}.
 \end{lem}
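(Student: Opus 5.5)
Lemma \ref{courbureVar2} is a reformulation of Lemma \ref{courbureVar1}, so I will derive every line from formulas \eqref{variation} and \eqref{variationDesCourbures}, using the identification of the normal plane $p+\mathrm{span}(e_{0n},N_0)$ with $\mathbb{C}$ given in \eqref{theta}. Write the deformation vector as $\zeta=m_p(t)=te_{0n}+\alpha(t)N_0$ (Lemma \ref{para4}). The first observation is that
\[
\beta_i=1-\alpha\lambda_{0i}-t\mu_{0i}=1-\langle n_{0i},\zeta\rangle,
\]
since $n_{0i}=\mu_{0i}e_{0n}+\lambda_{0i}N_0$. Substituting this into \eqref{variationDesCourbures} gives the third line directly:
\[
n_i(\zeta)=\frac{n_{0i}}{1-\langle n_{0i},\zeta\rangle}.
\]
Here the key point, to be justified from \eqref{variation}, is that $\{e_n,N\}$ is obtained from $\{e_{0n},N_0\}$ by the rotation of angle $\theta$ in \eqref{theta}. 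Consequently the components $(\mu_i,\lambda_i)$ in the moving frame equal $\frac{1}{\beta_i}(\mu_{0i},\lambda_{0i})$ read in the rotated frame. This is exactly what \eqref{variation} expresses, with $\gamma=\cos\theta$ and $\gamma\alpha'=\sin\theta$.

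Next I factor out the curvature radius. Dividing numerator and denominator by $|n_{0i}|=\nu_{0i}=1/\rho_i$ gives
\[
n_i=\frac{n_{0i}/|n_{0i}|}{\rho_i-\langle n_{0i}/|n_{0i}|,\zeta\rangle}.
\]
The denominator is the signed Euclidean distance from $\zeta$ to the focal line $l_{0i}=\{x:\langle x,n_{0i}\rangle=1\}$, because that line has unit normal $n_{0i}/|n_{0i}|$ and lies at distance $\rho_i$ from the origin $p$. The mean curvature line then follows by summing over $i$ and dividing by $n-1$, using the definition $(n-1)H_{U_t}=\sum_i n_i$ from \S\ref{curvatureNotations}, applied on $U_t$ with $\dim U_t=n-1$.

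The first two lines are the components of $n_i$: $\lambda_i=\langle n_i,N\rangle$ and $\mu_i=\langle n_i,e_n\rangle=\langle n_i,T\rangle$, since $T=e_n$ along the flow curve. For the rightmost expressions I use that $J$ is the rotation by $\pi/2$, so $\langle Ju,v\rangle=-\langle u,Jv\rangle$. Together with $JT=N$ and $JN=-T$ this gives $\langle u,N\rangle=\langle u,JT\rangle=-\langle Ju,T\rangle$ and $\langle u,T\rangle=-\langle u,JN\rangle=\langle Ju,N\rangle$. Applying these with $u=n_{0i}/(|n_{0i}|\,\mathrm{dist})$ yields the stated formulas.

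The main obstacle is the frame bookkeeping. The vector $n_{0i}$ is naturally written in the fixed frame at $p$, while $\lambda_i,\mu_i$ are taken against the moving frame $\{e_n,N\}$. I must check that the inner products in the lemma use the actual vectors $T,N$ at $m_p(t)$, that the sign of $\alpha'$ in \eqref{unitNormal} matches the rotation convention of \eqref{theta}, and that the sign convention for $J$ matches the orientation $JT=N$. I would verify this on a single example, such as a round cylinder, before finalising the signs.
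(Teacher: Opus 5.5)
Your proposal is correct and is essentially the paper's argument. Both rest on Lemma \ref{courbureVar1}, on $\beta_i=1-\langle n_{0i},\zeta\rangle$, on the fact that $\{e_n,N\}$ is $\{e_{0n},N_0\}$ rotated by $\theta$ (so that $\gamma(\lambda_{0i}-\alpha'\mu_{0i})=\langle n_{0i},N\rangle$ and $\gamma(\mu_{0i}+\alpha'\lambda_{0i})=\langle n_{0i},T\rangle$), and on the skew-symmetry of $J$ with $JT=N$. The only difference is order: you assemble $n_i=n_{0i}/\beta_i$ first and read off components, while the paper computes the components first and then gets the vector identity from parallelism of $U_t$ and $U_0$. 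Your sign conventions ($J^2=-\mathrm{Id}$, $JN=-T$, $N=\gamma(N_0-\alpha' e_{0n})=JT$) are consistent, so the rightmost expressions come out exactly as stated without needing the cylinder check. They are in fact cleaner than the paper's, which writes $J^2=Id$.
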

\begin{proof} 
As $\gamma (\lambda_{0i}-\alpha'\mu_{0i})= \langle n_{0i}, N\rangle,$ $\beta_i = 1-\langle n_{0i},\zeta\rangle$, $J^2=Id$ and $JT=N$, we obtain
\begin{equation}\label{variationDeLambda}
\begin{array}{lll}
\lambda_i &=(\lambda_{0i}-\alpha'(t)\mu_{0i})\frac{\gamma}{\beta_i}  
 &= \frac{\langle n_{0i}, N\rangle }{1-\langle n_{0i},\zeta\rangle} = -\langle    \frac{ J \frac{n_{0i}}{| n_{0i} |} }{dist(p,l_{0i}) }, T \rangle . \\ 
 \hskip -1,5 in {\rm Similarly,}& \phantom{x}\hfill\\
 \mu_i &=(\mu_{0i}+\alpha'(t)\lambda_{0i})\frac{\gamma}{\beta_i} 
 &= \frac{\langle n_{0i}, T\rangle }{1-\langle n_{0i},\zeta\rangle} =
 \langle    \frac{ J \frac{n_{0i}}{| n_{0i} |} }{dist(p,l_{0i}) }, N \rangle.
 \end{array}
\end{equation}
Note that $U_t$ is parallel to $U_0$  since $U_t = U_0 + \zeta$, where 
$\zeta= te_{0n} + \alpha(t) N_0 $. It follows directly that $n_i = \frac{n_{0i}}{1-\langle n_{0i},\zeta\rangle} $.
\end{proof} 
Third, we give below the variation of certain symmetric functions of the curvatures in terms of the coordinates 
in $span(e_{0n},N_0)$ and in terms of the normal curvature vectors $n_{0i}$.
We will use the following notations in the sequel for symmetric functions of the principal curvatures of the level sets.
\begin{defn} \label{defSym} Let 
$$\Lambda_k := \sum_{i=1}^{n-1} \lambda_i^k,\quad  M_k := 
\sum_{i=1}^{n-1} \mu_i^k\quad \text{and} \quad  N_{kl}  := \sum_{i=1}^{n-1} \lambda_i^k\mu_i^l.$$
\end{defn}
\begin{lem} 
We can expand the coefficients of the former  ODE's using either cartesian coordinates,  polar coordinates or using  the arc-length.
This yields the following different expressions for the coefficients $\mu$ and $\Lambda_2$.
First, let us introduce the series in $u$, $\pi_k(u) := \left(\frac{1}{1-u}\right)^{(k-1)}$, which is 
the formal $(k-1)$ derivative of $\frac{1}{1-u}$.\\
For $M_1$, we have
 \begin{equation}
M_1  :=
\sum_{i=1}^{n-1}\mu_{i=1} =\sum_{i=1}^{n-1}(\mu_{0i}+\alpha'\lambda_{0i})\frac{\gamma}{\beta_i} =
\frac{1}{\sqrt{1+\alpha'^2}} \sum_{i=1}^{n-1} \frac{\mu_{0i}+\alpha'\lambda_{0i}}{ 1 -\alpha\lambda_{0i}-t\mu_{0i}},
\end{equation}
 or
 \begin{align*}
    M_1 & = \frac{d}{ds}\left(  \sum_{i=1}^{n-1}\frac{1}{dist(p,l_{0i})  }\right) =
\frac{d}{ds}\left(  \sum_{i=1}^{n-1} \frac{|n_{0i}|}{ 1-\langle n_{0i},X\rangle}\right) 
\\
& = \frac{d}{ds}\left(  \sum_{i=1}^{n-1}  |n_{0i}|\pi_1(\langle  n_{0i},X \rangle  )   \right) = 
 \sum_{i=1}^{n-1} |n_{0i}|\langle n_{0i},T    \rangle\pi_2(\langle  n_{0i},X \rangle  ). 
 \end{align*}
  For $\Lambda_2$, we have 
 $$\Lambda_2:= \sum_{i=1}^{n-1} \lambda_i^2=  \frac{1}{1+\alpha'^2}  \sum _{i=1}^{n-1} 
  \frac{(\lambda_{0i}-\alpha'\mu_{0i})^2}{(1 -\alpha\lambda_{0i}-t\mu_{0i})^2 }, 
 $$
  or in terms of $e_{0i}:=\frac{n_{0i}}{|n_{0i}|}$
  $$\Lambda_2=    \sum _{i=1}^{n-1} 
 \left(  \frac{\langle n_{0i},N\rangle}{1 -\langle n_{0i},X\rangle } \right)^2=
   \sum _{i=1}^{n-1} 
 \left(  \frac{\langle e_{0i},N\rangle}{\rho_i -\langle e_{0i},X\rangle } \right)^2
  =    \sum _{i=1}^{n-1} 
 \left(  \frac{\langle e_{0i},N\rangle}{ dist(p,l_{0i}) } \right)^2:= \sum_{i=1}^{n-1}\tilde \pi_2(\langle n_{0i},X\rangle).$$
\end{lem}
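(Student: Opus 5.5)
The plan is to obtain every expression by summing over $i$ the pointwise formulas already established for the individual principal curvatures. For the cartesian forms I will start from Lemma \ref{courbureVar1}. Summing $\mu_i(t)=(\mu_{0i}+\alpha'\lambda_{0i})\gamma/\beta_i$ over $i=1,\ldots,n-1$ gives the first expression for $M_1$ immediately, with $\gamma=(1+\alpha'^2)^{-1/2}$ and $\beta_i=1-\alpha\lambda_{0i}-t\mu_{0i}$. Squaring $\lambda_i=(\lambda_{0i}-\alpha'\mu_{0i})\gamma/\beta_i$ and summing gives the cartesian expression for $\Lambda_2$, the factor $\gamma^2=1/(1+\alpha'^2)$ coming out of the sum. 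No further work is needed here.

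For the intrinsic forms I will pass to the arc-length parametrization \eqref{theta} of the flow curve, so that $X(s)=\zeta$ and $T=dX/ds$ are expressed in the frame $\{e_n,N\}$. I will then use the identities from the proof of Lemma \ref{courbureVar2}:
\[
\beta_i=1-\langle n_{0i},X\rangle,\qquad \gamma(\lambda_{0i}-\alpha'\mu_{0i})=\langle n_{0i},N\rangle,\qquad \gamma(\mu_{0i}+\alpha'\lambda_{0i})=\langle n_{0i},T\rangle .
\]
Substituting these into the summed formulas gives
\[
\Lambda_2=\sum_i\Bigl(\frac{\langle n_{0i},N\rangle}{1-\langle n_{0i},X\rangle}\Bigr)^2 .
\]
Dividing numerator and denominator by $|n_{0i}|$, with $\rho_i=1/|n_{0i}|$ and $e_{0i}=n_{0i}/|n_{0i}|$, turns this into $\sum_i\bigl(\langle e_{0i},N\rangle/(\rho_i-\langle e_{0i},X\rangle)\bigr)^2$. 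The denominator $\rho_i-\langle e_{0i},X\rangle$ is exactly the signed distance from the point to the focal line $l_{0i}$, which yields the $dist(p,l_{0i})$ form. The notation $\tilde\pi_2$ is then just a name for the summand.

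For the derivative form of $M_1$ the key computation is the chain rule along the curve: since $dX/ds=T$ and $n_{0i}$ is constant in the plane,
\[
\frac{d}{ds}\,\pi_1(\langle n_{0i},X\rangle)=\langle n_{0i},T\rangle\,\pi_2(\langle n_{0i},X\rangle),
\]
where $\pi_1(u)=1/(1-u)$ and $\pi_2=\pi_1'$. This shows directly that the last two expressions in the statement agree.

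The main obstacle is matching this derivative form with the summed form $M_1=\sum_i\langle n_{0i},T\rangle/(1-\langle n_{0i},X\rangle)$ obtained above. That form equals $-\frac{d}{ds}\sum_i\log\bigl(1-\langle n_{0i},X\rangle\bigr)$, whereas $|n_{0i}|\,\langle n_{0i},T\rangle\,\pi_2$ carries a squared denominator and an extra factor $|n_{0i}|$. I will therefore carefully recheck the normalisations: whether $d/ds$ is meant with respect to a different parameter, and how the factors $|n_{0i}|$ and $\rho_i$ are absorbed when one passes from $n_{0i}$ to $e_{0i}$. If the two genuinely differ, I will state and prove the identity in the logarithmic-derivative form, which is what the computation actually gives. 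I will then keep the $\pi_k$ series version as the expansion $\pi_k(u)=(k-1)!\,(1-u)^{-k}$ for later use in the ODE for $\alpha$.
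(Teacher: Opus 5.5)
Your approach is the natural one, and the paper gives no argument beyond it: the formulas come from summing Lemma~\ref{courbureVar1} over $i$ and rewriting with $\beta_i=1-\langle n_{0i},\zeta\rangle$, $\gamma(\lambda_{0i}-\alpha'\mu_{0i})=\langle n_{0i},N\rangle$ and $\gamma(\mu_{0i}+\alpha'\lambda_{0i})=\langle n_{0i},T\rangle$, as in Lemma~\ref{courbureVar2}. The cartesian formulas and all the forms of $\Lambda_2$ go through exactly as you describe. In the $\Lambda_2$ formula you must read $\rho_i$ as $1/|n_{0i}|$, as you do, and not as the time-dependent $1/\nu_i$ defined earlier in the paper.

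Your suspicion about $M_1$ is right, and you can stop looking for a normalisation or a different length parameter that reconciles the two: the first equality $M_1=\frac{d}{ds}\sum_i 1/dist(p,l_{0i})$ is false. The other equalities in that chain are correct among themselves, by your chain-rule computation.

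To see the failure, note that $1/dist(p,l_{0i})=|n_{0i}|/\beta_i=\nu_i$ and $\frac{d}{ds}\beta_i=-\langle n_{0i},T\rangle$. Hence $\frac{d}{ds}\nu_i=\nu_i\mu_i$, so the right-hand side equals $\sum_i\nu_i\mu_i$, not $\sum_i\mu_i$. The claimed identity is not even scale-invariant: under a homothety of ratio $c$, $M_1$ scales like $c^{-1}$, while $\frac{d}{ds}(1/dist)$ scales like $c^{-2}$.

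A concrete check is the rotational case $\mu_{0i}=0$, $\lambda_{0i}=\lambda_0$. There the right-hand side equals $\frac{|\lambda_0|}{1-\lambda_0\alpha}M_1$, which differs from $M_1$.

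What you should state and prove in place of that equality is your logarithmic form:
\[
M_1=\frac{d}{ds}\sum_{i=1}^{n-1}\log\nu_i=-\frac{d}{ds}\sum_{i=1}^{n-1}\log dist(p,l_{0i})=-\frac{d}{ds}\log\prod_{i=1}^{n-1}\beta_i .
\]
The logarithms make sense because all $\beta_i>0$ near $t=0$.
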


\subsubsection{Equation of the flow curve of BCHs}
We use the notations of subsection \ref{curvatureNotations}.
\begin{prop}\label{paraODE}
A flow curve starting at $p\in U_0$ is a curve $\zeta(p,t) =te_{0n}(p)+\alpha(t)N_0(p), t\in I,$ in the  fixed normal  affine 
 plane
$p+NU_0(p) = p + span( e_{0n},N_0)$. The function  $\alpha$ satisfies the  second order  ODE:
\begin{equation}\label{lambda100}
  \frac{\alpha''}{1+\alpha'^2}(t) = - \frac{1}{3}  \sum _{i=1}^{n-1} 
  \frac{\lambda_{0i}-\alpha'(t)\mu_{0i}}{1 -\alpha(t)\lambda_{0i}-t\mu_{0i} }        
  \end{equation} 
  with initial conditions $\alpha(0)=\alpha'(0)=0$.\\
  Equivalently, in terms of $\zeta$ and its arc-length $s$, we have
\begin{equation}\label{lambda120}
  \left\langle \zeta_{,ss}+ \frac{n-1}{3} H_U(\zeta) ,J\zeta_{,s}\right\rangle = 0  \end{equation} 
  with initial conditions $\zeta(0)=p, \zeta_{,s}(0)=e_{0n}$.
\end{prop}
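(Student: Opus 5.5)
The plan is to combine three facts already established: the flow curve is a planar curve lying in $p+{\rm span}(e_{0n},N_0)$ (Lemmas \ref{para3} and \ref{para4}); it is a curvature line of principal curvature $\lambda_n=-\frac{nh}{2}$ (Lemma \ref{para1}); and the principal curvatures $\lambda_i$ of the leaves evolve as in Lemma \ref{courbureVar1}. The ODE should then be nothing more than the biconservative condition of Proposition \ref{biTan1}, rewritten as an identity between the curvature of the planar curve $t\mapsto te_{0n}+\alpha(t)N_0$ and the trace of the shape operator.

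\textbf{Step 1: the trace condition.} Since $nh={\rm Tr}A_N=\sum_{i=1}^{n-1}\lambda_i+\lambda_n=\Lambda_1+\lambda_n$, the condition $\lambda_n=-\frac{nh}{2}$ gives $2\lambda_n=-\Lambda_1-\lambda_n$. Hence
$$\lambda_n=-\tfrac13\Lambda_1 .$$
This is where the factor $\frac13$ comes from.

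\textbf{Step 2: $\lambda_n$ from $\alpha$.} Along the flow curve, by \eqref{unitNormal}, $T=e_n=\gamma(e_{0n}+\alpha'N_0)$ and $N=\gamma(N_0-\alpha'e_{0n})$, with $\gamma=(1+\alpha'^2)^{-1/2}$ and $e_{0n},N_0$ fixed. Differentiating in $t$ gives $\frac{dT}{dt}=\alpha''\gamma^3(N_0-\alpha'e_{0n})=\alpha''\gamma^2N$. Dividing by $ds/dt=1/\gamma$ yields $D_TT=\alpha''\gamma^3 N$. Because the curve is a curvature line, $D_TN=-\lambda_nT$, so
$$\lambda_n=-\langle D_TN,T\rangle=\langle N,D_TT\rangle=\alpha''\gamma^3 .$$

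\textbf{Step 3: the ODE and initial conditions.} By Lemma \ref{courbureVar1},
$$\Lambda_1=\gamma\sum_{i=1}^{n-1}\frac{\lambda_{0i}-\alpha'\mu_{0i}}{\beta_i}.$$
Inserting this and Step 2 into Step 1 and dividing by $\gamma$ gives $\alpha''\gamma^2=-\frac13\sum_{i=1}^{n-1}(\lambda_{0i}-\alpha'\mu_{0i})/\beta_i$, which is \eqref{lambda100}. The initial conditions follow from the parametrization \eqref{surRev}. At $t=0$ we have $X=Y$, so $\alpha(0)=0$. At points of $U_0$ the tangent $e_n$ equals $e_{0n}$, and this forces $\alpha'(0)=0$.

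\textbf{Step 4: the intrinsic form.} With $\zeta_{,s}=T$ and $J\zeta_{,s}=N$, Step 2 reads $\langle\zeta_{,ss},J\zeta_{,s}\rangle=\lambda_n$. By Lemma \ref{courbureVar2}, $(n-1)H_U(\zeta)=\sum_i n_i$ with $\langle n_i,N\rangle=\lambda_i$, so $\langle (n-1)H_U(\zeta),J\zeta_{,s}\rangle=\Lambda_1$. Step 1 then becomes exactly \eqref{lambda120}, and the initial data $\zeta(0)=p$, $\zeta_{,s}(0)=e_{0n}$ translate the conditions $\alpha(0)=\alpha'(0)=0$.

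\textbf{Main obstacle.} The only delicate point is getting the sign conventions consistent. The orientation of $N$ relative to $e_{0n}$, together with $D_TN=-\lambda_nT$, must agree with the convention $N_{0,i}=-\lambda_{0i}Y_{,i}$ used in Lemma \ref{courbureVar1}. I would check this carefully at $t=0$, where $N=N_0$ and $T=e_{0n}$; everything else is a direct substitution.
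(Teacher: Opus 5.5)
Your proposal is correct and follows the paper's route. Both arguments use the trace identity $\Lambda_1+3\lambda_n=0$, the planar-curve formula $\lambda_n=\alpha''/(1+\alpha'^2)^{3/2}$, substitution of the evolution formula for $\Lambda_1$ from Lemma~\ref{courbureVar1}, and the rewriting via $JT=N$ and $(n-1)H_U=\sum_i n_i$. The only cosmetic differences are that you obtain $\lambda_n$ from $\langle D_TT,N\rangle$, while the paper differentiates $N=\gamma(N_0-\alpha' e_{0n})$ in $t$ and compares with $N_{,t}=-\lambda_nX_{,t}$, and that you justify the initial conditions $\alpha(0)=\alpha'(0)=0$, which the paper only states.
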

\begin{proof}
From Lemma \ref{courbureVar1}, we have 
\begin{equation}\label{lambda10}
\Lambda_1:=\sum_{i=1}^{n-1} \lambda_i  =  \frac{1}{\sqrt{1+\alpha'^2}}  \sum _{i=1}^{n-1} 
  \frac{\lambda_{0i}-\alpha'\mu_{0i}}{1 -\alpha\lambda_{0i}-t\mu_{0i} } . 
\end{equation}
As   $nh+2\lambda_n=0,$ or equivalently, as  $\sum_{i=1}^{n-1} \lambda_i +3\lambda_n = \Lambda_1 +3\lambda_n =0$, we get
\begin{equation}\label{lambda1}
\lambda_n = - \frac{1}{3\sqrt{1+\alpha'^2}}  \sum _{i=1}^{n-1} 
  \frac{\lambda_{0i}-\alpha'\mu_{0i}}{1 -\alpha\lambda_{0i}-t\mu_{0i} } . 
\end{equation} 
 Differentiating   $N$  in the $\frac{\partial}{\partial t}  $  direction gives  
$$N_{,t} =  (\frac{1}{\sqrt{1+\alpha'^2}})' N_0 -  e_{0n}(\frac{\alpha'}{\sqrt{1+\alpha'^2}})' = -\lambda_n X_{,t} =-\lambda_n ( \alpha' N_0 + e_{0n}).$$
 Thus, we simply obtain the standard expression    for the curvature of the planar  flow curve  in terms of $\alpha$
 \begin{equation}\label{lambda2}
 \lambda_n =  \frac{\alpha''}{(1+\alpha'^2)^{3/2}}  \left(= \frac{d\theta}{ds}\right),
\end{equation}
where $\theta$ is defined in equations \eqref{theta}.
Plugging equation  \eqref{lambda2} into equation \eqref{lambda10} gives equation  \eqref{lambda100}.
Equivalently,  from Lemma \ref{courbureVar2}, equations   \eqref{lambda10}  and   \eqref{lambda100} can be also written as
\begin{equation}\label{courbeIntegraleTer}
\begin{array}{ll}
\Lambda_1:=\sum_{i=1}^{n-1} \lambda_i  &= \left\langle     \sum _{i=1}^{n-1} 
  \frac{n_{0i} }{1-\langle n_{0i}, \zeta\rangle}, J\zeta_{,s} \right\rangle  \\
  \quad {\rm and}\quad & \\
   \left\langle\zeta_{,ss}+ \frac{n-1}{3} H_U ,J\zeta_{,s}\right\rangle &= 0.
 \end{array}
 \end{equation}
Since  $\zeta_{,s}$ is the tangent vector field to the flow curve,   $J\zeta_{,s}$ is the normal to the curve, and 
 $H_{U_{t}}(\zeta)$ is  the mean curvature vector field at $\zeta(t)$ given in Lemma \ref{courbureVar2}.
\end{proof}
$$
\begin{tikzpicture}[scale = 1]
		\draw [thick,color = red ] (-3,-2.32)--(3,3.68);
		\draw [thick,color = green ] (3,.9)--(1.5,2.2);

		 \draw[color=blue,domain = 0:5,very thick]    plot (\x,\x^2/10)             node[right] {$\alpha$};
		\draw [dashed] (0,0) -- (0,6);
		\draw [dashed] (0,0) -- (8,0);
		\draw [very thick,->] (0,0)--(0,2);
		\draw [very thick,->] (0,0)--(2,0);
		\draw [very thick,->] (0,0)--(-2,2);
		\draw[-stealth, thick] (3,.9) -- (4,1.6);
		\draw [very thick] node at (-2,1.5) {$n_{0i}$};
		\draw [very thick] node at (.45,1.9) {$N_0$};
		\draw [very thick] node at (0,-.3) {O};
		\draw [very thick] node at (3,.6) {P};
		\draw [very thick] node at (2.4,1.6) {\tiny ${\rho_i-\langle \frac{n_{0i}}{|n_{0i}|},\vec{OP}(t)}\rangle$};
		\draw [very thick] node at (2.7,3) {$l_i$};
		\draw [very thick] node at (1.7,-.3) {$e_{0n}$};
		\draw [very thick] node at (3.7,1) {T};
		\foreach \Point in {(0,0),(3,.9),(1.5,2.20) }{ \node at \Point {\textbullet};}
\end{tikzpicture}$$
\begin{center}
{\small Fig 1.  A Flow curve and a focal line }
\end{center}

\subsection{ The level  sets of the mean curvature function $h$ on a BCH are isoparametric }\label{algebre}
There are at least three different definitions of isoparametric   submanifolds. We first show that 
any BCH  $\Sigma^n$  is foliated by  the 
level sets of the mean curvature function $h$, which are  isoparametric codimension 2 submanifolds 
of $\mathbb{E}^{n+1}$, as defined by Terng  \cite{T}.
Second, these h-level sets can be also seen   as  hypersurfaces of   $\Sigma^n$. As such,  
they  are constant mean curvature  hypersurfaces of $\Sigma^n$, 
as  well as   locally parallel hypersurfaces (which are also $h$-level hypersurfaces of $\Sigma^n$).
Third, and from the dual perspective, the function $h$ is an isoparametric function on $\Sigma^n$, in the sense that  $|\nabla h|$ and $\Delta h$ are constant 
on the level sets of $h$ (see the beginning of the proof of Proposition \ref{evolution}).
\begin{prop}\label{para5}Suppose $\Sigma^n$ is proper.  The level sets of its  mean curvature function,
  $h,$  i.e., the leaves $U_t$,
are isoparametric within the meaning  of Terng (cf. \cite{T} and the introduction). 
Also, these level sets are parallel and are   of constant mean curvature as hypersurfaces of $\Sigma^n$.
 The mean curvature function, $h$, defined on $\Sigma^n$, is  also an isoparametric map, i.e.,
  $|\nabla h|$ and $\Delta h$ are constant on each leaf of $h$. 
 \end{prop}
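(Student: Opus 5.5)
The plan is to show that the principal curvatures of each leaf $U_t$, in the parallel normal directions, are constant along the leaf. The starting point is that $h$, and hence $\lambda_n=-\frac{nh}{2}$, is constant on each $U_t$ by definition, and that $\Lambda_1=\sum_{i=1}^{n-1}\lambda_i=nh-\lambda_n=-3\lambda_n$ is therefore constant on $U_t$ too.

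\textbf{Step 1: constancy of the curvature of the flow curve.} Fix the leaf $U_0$ and use the parametrization of Lemma \ref{para4}. There $\alpha$ depends on $t$ only, and the same function $\alpha$ works for every $p\in U_0$. The parameter $t$ is the $e_{0n}$-component of $\zeta$, and the leaf $U_t$ is $\{t=\text{const}\}$. Since $h$ is constant on $U_t$, the curvature $\lambda_n(t)=\alpha''/(1+\alpha'^2)^{3/2}$ does not depend on $p$, and neither does $\Lambda_1$. By Lemma \ref{courbureVar1}, for every $p\in U_0$ and every $t\in I$,
$$\frac{1}{\sqrt{1+\alpha'^2}}\sum_{i=1}^{n-1}\frac{\lambda_{0i}(p)-\alpha'(t)\mu_{0i}(p)}{1-\alpha(t)\lambda_{0i}(p)-t\mu_{0i}(p)}=-3\lambda_n(t).$$
The left side depends on $p$ only through the multiset of normal curvature vectors $n_{0i}(p)=\mu_{0i}(p)+i\lambda_{0i}(p)$. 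The right side is independent of $p$.

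\textbf{Step 2: the key algebraic step.} Expand the left side in powers of $t$ around $t=0$, using $\alpha(0)=\alpha'(0)=0$ and writing $\alpha=\frac{a}{2}t^2+\dots$. The order-$t^k$ coefficients involve the power sums $\Lambda_{k+1}(p)$ and mixed sums $N_{jl}(p)$ at $t=0$, together with the fixed Taylor coefficients of $\alpha$. Inductively, this shows that a sequence of symmetric functions of the $n_{0i}(p)$ is independent of $p$.

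A cleaner route uses Lemma \ref{courbureVar2}. Write $\sum_i \frac{\langle n_{0i},J T\rangle}{1-\langle n_{0i},\zeta\rangle}=F(\zeta,T)$. The argument works better if we vary the leaf rather than only $t$: each leaf $U_t$ carries the same structure, with its own flow curve (the same curve, translated). So the identity holds on an open set of the normal plane swept by $\zeta$ and the tangent direction. Since $F$ is a rational function of $\zeta$ whose poles lie on the focal lines $l_{0i}$, one could try to read off the multiset $\{n_{0i}\}$ from it.

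This is where I expect the main obstacle. Only along the single curve $(\zeta(t),T(t))$ do we know that the value is $p$-independent, so a one-parameter family of rational identities must determine $n$ pairs of unknowns. I would push the Taylor expansion to high order. Coefficient matching should give $p$-independence of $\sum_i \langle n_{0i},v\rangle^{k}\langle n_{0i},w\rangle$-type sums for enough $k$ to recover the multiset by a Newton-identity or Vandermonde argument. The structure of the expansion, which is a derivative in $s$ of $\sum \pi_1(\langle n_{0i},\zeta\rangle)$ as in the lemma on $M_1$, suggests that the $k$-th coefficient is linear in $\sum_i \langle n_{0i},T_0\rangle^{k}$-type power sums plus lower-order known terms. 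If that fails, I would instead also use the parallel-leaf identity at every $t_0$ as a new base leaf. This gives the power sums of the $n_i(t_0)$ along the $e_n$ and $N$ directions, i.e.\ $\Lambda_k$ and $M_k$ for all $k$, which determine the $\lambda_i,\mu_i$ separately.

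\textbf{Step 3: conclusion.} Once the $\lambda_{0i},\mu_{0i}$ are constant on $U_0$ (with multiplicities constant on the chosen open set), $U_0$ has flat normal bundle by Lemma \ref{para1}. Its principal curvatures along the parallel frame $\{N_0,e_{0n}\}$, hence along any parallel normal field, are constant, so $U_0$ is isoparametric in Terng's sense. By Lemma \ref{courbureVar2}, $U_t=U_0+\zeta$ is a parallel submanifold, so it is isoparametric as well. As a hypersurface of $\Sigma^n$ with unit normal $e_n$, the mean curvature of $U_t$ is $M_1$, which is constant on $U_t$; the leaves are parallel in $\Sigma^n$ because the flow curves are unit-speed geodesics orthogonal to them (Lemma \ref{para3}).

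\textbf{Step 4: $h$ is isoparametric.} Because the foliation is Riemannian, $|\nabla h|=|dh/ds|$ is the derivative of $-\frac{2}{n}\lambda_n$ along the common arc-length. This is the same for all $p$, so $|\nabla h|$ is constant on leaves. Finally, $\Delta h=h_{ss}+M_1 h_s$ up to sign convention, since the $U_t$ are parallel hypersurfaces with mean curvature $M_1$. This is constant on each leaf.
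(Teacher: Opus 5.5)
Your Steps 1, 3 and 4 agree with the paper's framework. Congruent flow curves give the identity $\tilde R(p,t)=\tilde R(p',t)$. Once the $\lambda_{0i},\mu_{0i}$ are known to be constant, the rest follows: flat normal bundle (Lemma \ref{para1}), parallel leaves, equidistance of the leaves for $|\nabla h|$, and $M_1$ constant for $\Delta h$.

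But Step 2 is the whole content of the proposition, and you do not carry it out. Neither of the two routes you sketch works as stated. Each Taylor coefficient of $\tilde R(p,t)$ at $t=0$ is a single scalar that mixes $\Lambda_k$, $M_k$, $N_{kl}$ with the Taylor coefficients of $\alpha$. Order $1$ gives only $N_{11}-\alpha''(0)M_1$; order $2$ gives $N_{12}+\tfrac{\alpha''(0)}{2}\Lambda_2-\alpha''(0)M_2-\tfrac{\alpha'''(0)}{2}M_1$; and so on. You never isolate a family of power sums $\sum_i\langle n_{0i},v\rangle^k\langle n_{0i},w\rangle$ with $v,w$ fixed, so no Newton or Vandermonde argument applies. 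Proving that this single sequence of constraints separates multisets of $n-1$ pairs is precisely the nontrivial point.

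Your fallback of re-basing at each leaf $U_{t_0}$ adds no information. By Lemma \ref{courbureVar1}, the curvature data of $U_{t_0}$ are determined by those of $U_0$, so the re-based identity is the same one-variable identity expanded at $t_0$. It only says that $\Lambda_1$ and its flow derivatives ($\Lambda_{1,s}=N_{11}+\tfrac{1}{3}M_1\Lambda_1$, $\Lambda_{1,ss}$, \dots, as in the last Appendix) are constant on leaves. It does not give $\Lambda_k$ and $M_k$ separately.

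The missing idea is the one you brush against when you note that the identity is known only along a single curve. The paper views $\tilde R(p,t)$ as the restriction of a rational function $A_p(x,y)-B_p(x,y)z$ to the curve $(t,\alpha(t),\alpha'(t))$, and splits into two cases:
\begin{itemize}
\item If $t,\alpha,\alpha'$ are algebraically independent, equality on the curve forces equality of rational functions over $\mathbb{C}(l,k)$. Uniqueness of the partial fraction decomposition (the denominators $lt^2+bt-c$ are pairwise coprime) then recovers the pairs $(\mu_{0i}/\lambda_{0i},1/\lambda_{0i})$ up to permutation (Lemma \ref{casTranscendant}).
\item Otherwise, non-constant curvatures force a polynomial relation $P(t,\alpha)=0$ (Lemma \ref{reductionAuCasAlgebrique}). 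Algebraic solutions of \eqref{lambda100} are then excluded by examining poles at finite points and at infinity, after perturbing the base leaf so that $r_0=\sum_i\mu_{0i}/\lambda_{0i}$ and $s_0=\sum_i\lambda_{0i}/\mu_{0i}$ are nonzero (Lemma \ref{casAlgebrique}).
\end{itemize}
Without this transcendence dichotomy, or some substitute argument that the single function $t\mapsto\tilde R(p,t)$ determines the multiset $\{n_{0i}(p)\}$, your proof does not establish constancy of the principal curvatures of $U_0$. Everything in Steps 3 and 4 rests on that constancy.
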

\begin{proof}   First, we  show that the principal curvatures of the $h$-level set, $U_0$,  are constant. Indeed, by Lemma  \ref{paraODE},  the 
flow curves are congruent,   in particular,
by Proposition \ref{paraODE},  for any two points $s,s'\in U_{0}$  and $x_n=t$, in a small neighborhood of $0$, we have
\begin{equation} \label{paraODEbis}
 \tilde R(s,t):= \sum_{i=1}^{n-1}\frac{\lambda_{0i}(s)-\alpha' (t)\mu_{0i}(s)}{1-\lambda_{0i}(s)\alpha(t)-t\mu_{0i}(s)}=
 \tilde R(s',t):=
  \sum_{i=1}^{n-1}\frac{\lambda_{0i}(s')-\alpha' (t)\mu_{0i}(s')}{1-\lambda_{0i}(s')\alpha(t)-t\mu_{0i}(s')}.
\end{equation}
\begin{rem} We may as well  sum over the different normal curvature vectors, weighted by the multiplicity of the corresponding 
eigenspaces, and write equivalently
\begin{equation} \label{paraODEbis}
 \tilde R(s,t):= \sum_{i=1}^{r}\mathfrak{d}_i \frac{\lambda_{0i}(s)-\alpha' (t)\mu_{0i}(s)}{1-\lambda_{0i}(s)\alpha(t)-t\mu_{0i}(s)}=
 \tilde R(s',t):=
  \sum_{i=1}^{r}\mathfrak{d}_i \frac{\lambda_{0i}(s')-\alpha' (t)\mu_{0i}(s')}{1-\lambda_{0i}(s')\alpha(t)-t\mu_{0i}(s')},
\end{equation}
where the multiplicities are locally invariant.
\end{rem}
Let us consider  $c_i(s):=\frac{1}{\lambda_{0i}}(s)$  and  $b_i(s) =\frac{\mu_{0i}}{\lambda_{0i}}(s)$. 
First, we show the uniqueness of the coefficients $(b_i,c_i)$
in the  case  where $\alpha,\alpha'$ and $t$ are algebraically independent. Let us see 
$\alpha $ as a (convergent) power series of $\mathbb{C}[[t]]$. From its properties, $\alpha(0)=\alpha'(0)=0$
and $\alpha"(0) \neq 0$, since $h\neq 0$. Hence, 
$\alpha = l.t^2$ and  $\alpha' = k.t$, where $l$ and $k$ are invertible elements of   $\mathbb{C}[[t]]$, as  their zero order coefficient 
is nonzero (cf. equation \eqref{lambda2}).  Then, equation  \eqref{paraODEbis} becomes an equality 
  \begin{equation} \label{ODE5}
\sum_{i=1}^{n-1}\frac{-tkb_i(s)+1}{t^2l +tb_i(s) -c_i(s)} =
\sum_{i=1}^{n-1}\frac{-tkb_i(s')+1}{t^2l +tb_i(s') -c_i(s')} = \frac{f}{g}.
 \end{equation}
$\frac{f}{g}$ is a rational function of $t$ in the extension field $ K=\mathbb{C}(l,k)$, 
where $f$ and $g$ are polynomials in $t$ 
 with coefficients in the field  $K$ and $deg(f)<deg(g)$. Equation \eqref{ODE5} then provides two decompositions of the same $\frac{f}{g}$.
\begin{lem}\label{casTranscendant} If there are  two  points $s_1,s_2\in U_0$,  such that 
$\tilde R(s_1,t) = \tilde R(s_2,t)$, and there is no nonzero polynomial $P$ such that $P(t,k,l)=0$, then 
 $\left(b_i\left(s_1\right)),c_i\left(s_1\right)\right) = (b_{\sigma(i)}(s_2),c_{\sigma(i)}(s_2))$ for $i= 1,\ldots,n-1$ and some  permutation 
 $\sigma \in \mathfrak{S}_{n-1}$.
\end{lem}
\begin{proof}
Suppose that $\tilde R(s_1,t) = \tilde R(s_2,t)$, but with $\left(b_i\left(s_1\right)),c_i\left(s_1\right)\right) 
\neq  (b_{\sigma(i)}(s_2),c_{\sigma(i)}(s_2))$ for some $i\in \{1,\ldots, n-1\}$.\\
Consider any two different polynomials of the form $lt^2+bt-c$ and $lt^2+b't-c'$, 
  i.e., such that   $(b,c) \ne q (b',c')$, thus  
\begin{enumerate}
    \item 
    If $b=b'$, then 
  $c\neq c'$,  and the polynomials are coprime. 
  \item If $ b\neq b'$, then the Euclidean algorithm  yields,  at the second step,  the  remainder 
 $$r= -c'+\frac{c-c'}{(b-b')}\left(b' + \frac{(c-c')l}{(b-b')}  \right).$$
 \begin{enumerate}
 \item If $c=c'$,  then $r=-c'$, which is nonzero.
 \item  If $c\neq c'$, then the $l$ coefficient is nonzero; hence, $r\neq 0$. 
 \end{enumerate}
\end{enumerate}  
In all these cases, the remainder is a nonzero element of $K$; hence,  the denominators are coprime.
 In conclusion,   any  two polynomials of the form $lt^2+bt-c$ are  either equal (iff $(b,c)=(b',c')$),   or coprime (iff $(b,c)\neq (b',c')$).\\
 Now,  we consider the following  term 
  \begin{equation} 
 \frac{-tkb+1}{t^2l +tb -c}, 
 \end{equation}
 then,  either the fraction is irreducible, or
 $l= b^2k(-1+kc)$, and the fraction reduces to an irreducible fraction,
which cannot  happen since $l,k$  are algebraically  independent.
The denominator is  either irreducible in $K$  or a product of two  linear forms with 
distinct roots, because  the discriminant $b^2+4cl\neq 0$ since $c\neq 0$. 
  In the latter case, the corresponding fraction can   be explicitly and uniquely  split  as a sum of two 
fractions of the form:
 \begin{equation} \label{ODE7}
 \frac{-tkb+1}{t^2l +tb -c}  = \frac{\delta}{t-\rho_1} +  \frac{\epsilon}{t-\rho_2}
\qquad  \delta,\epsilon,\rho_1,\rho_2 \in K. \end{equation}
 In short, grouping the fractions  having 
 identical denominators,  we obtain a partial fraction decomposition of $\frac{f}{g}$, i.e., a sum  of irreducible fractions
 with coprime denominators  of order one or two. By the uniqueness of the partial fraction
 decomposition (cf. for example \cite{VDW}), the partial fraction
 decomposition of  $\tilde R(s_1,t)$ is 
  identical to  the partial fraction decomposition  of   $\tilde R(s_2,t)$, which 
  implies $\left(b_i\left(s_1\right)),c_i\left(s_1\right)\right) = (b_{\sigma(i)}(s_2),c_{\sigma(i)}(s_2))$, for $i= 1,\ldots,n-1$,
  for some permutation of the indices $\sigma$, which contradicts the hypothesis.
  \end{proof}
Hence,  we know that in the case of non-constant principal curvatures of $U_0$, there is a nonzero polynomial $P$, such that, $P(t,\alpha,\alpha')=0$. In fact, we have the following.
\begin{lem}\label{reductionAuCasAlgebrique} Suppose that some principal curvatures are non-constant in a neighborhood  $U_0$. Then, 
 there is a nonzero polynomial $P_1$ such that $P_1(t,\alpha)=0$, and 
$\alpha$  extends to a  non-constant  multi-valued holomorphic algebraic function on $\mathbb{C}.$ 
\end{lem}
\begin{proof}
To each point $q\in U_0$, denote by $\{\lambda_i(q)\}_{i=1,\ldots, r}$ the (ordered) spectrum  values  of respective multiplicities
$\{\mathfrak{d}_i\}_{i=1,\ldots, r}$.
For each given value  $\lambda_i(q),$ there may correspond possible different values of the spectrum, $\{\mu_{i_j}(q),\ j=1,\ldots, r_i\}$, associated to 
  the   normal curvature vectors $\{n_{i_j}(q)\}_{i_j=1,\ldots, r_i}=\{(\lambda_i(q),\mu_i(q))\}_{i=1,\ldots, r}$.
In that case, we  perturb the level set,
replacing  $U_0$ by the h-level set, $U_t$ for small $t>0$. Then, from Lemma \ref{courbureVar1}, each  $\lambda_i(q_t)$ takes different values  $\lambda_{i_j}(q_t)$. We can then suppose  that 
there are $r$ normal principal vectors, $n_i(q) := (\lambda_i(q),\mu_i(q))$, of multiplicities $\mathfrak{d}_i$, with 
 $r\leq n-1$ and $ \sum_1^r \mathfrak{d}_i =n-1$, and  where the $\lambda_i$'s (respectively  $\mu_i$'s) are pairewise distinct.\\
Suppose there is  a  curve  $\gamma : I=[0,s_1[ \longrightarrow U_0 $  such that $\gamma(0)=p$ and
$\lambda_i(s):=\lambda_i(\gamma(s))\neq \lambda_{0i}:= \lambda_i(p)$ for some indices $i$.
Also, from  \eqref{paraODEbis} let us write    $\tilde R(s,t)$  as
$$\tilde R(s,t)   := A_s(t,\alpha) - B_s(t,\alpha)\alpha',$$
where $s\in ]0,s_1[$, and let us denote their respective underlying rational functions as
$$\tilde R_s(x,y,z) =  A_s(x,y) - B_s(x,y)z.$$
Then,  from  \eqref{paraODEbis}
\begin{equation}\label{egalite}
 A_{0}(t,\alpha) -B_{0}(t,\alpha) \alpha' = A_{s}(t,\alpha) -B_{s}(t,\alpha) \alpha' . 
 \end{equation}
Thus, let 
$$R_s(x,y):= \frac{A_s(x,y)-A_0(x,y)}{B_s(x,y)-B_0(x,y)}$$
for $s>0$.\\
 $R_s(x,y)$ is well-defined, i.e.,   $A_s-A_0$ and $B_s-B_0$ are nonzero  in $\mathbb{C}(x,y)$
for small positive $s$. Indeed, 
$$ A_s(x,y)-A_0(x,y) =\sum_{i=1}^r \left(\mathfrak{d}_i\frac{\lambda_i(s)}{1-\lambda_i(s)y-\mu_i(s)x}-
\mathfrak{d}_i\frac{\lambda_{0i}}{1-\lambda_{0i}y-\mu_{0i}x}\right),$$
in particular, 
$$ A_s(0,y)-A_0(0,y) =\sum_{i=1}^r \left(\mathfrak{d}_i\frac{\lambda_i(s)}{1-\lambda_i(s)y}-
\mathfrak{d}_i\frac{\lambda_{0i}}{1-\lambda_{0i}y}\right).$$
Since the $\lambda_{0i}$ are mutually  distinct, then  so are the  $\lambda_{i}(s)$ for small $s$.  Thus,  by the uniqueness of the partial fraction decomposition, 
and since at least one term of the RHS sum is nonzero, the numerator $ A_s(x,y)-A_0(x,y)$ of $R_s(x,y)$ is nonzero.
If $ B_s(x,y)=B_0(x,y)$,  
then, from equation \eqref{egalite}, $A_s(t,\alpha)-A_0(t,\alpha) =0$, which yields  a non-trivial polynomial $P$  such that $P(t,\alpha)=0$. 
If, on the other hand, the denominator
 $ B_s(x,y)-B_0(x,y)$ is a nonzero  rational function, then,  $R_s(x,y)$  is a  nonzero rational function. Hence, for $s'$ close to $s$, 
the numerator $N(x,y)$ of  $R_s(x,y)-R_{s'}(x,y)$ when reduced to a common denominator. $N$ is not trivial. 
Indeed,  in terms of the curvature radii in $R_s$,   $\rho_i := \frac{1}{\lambda_i}$ and ratios  $\tau_i:= \frac{\mu_i}{\lambda_i}$
(respectively  $\rho'_i := \frac{1}{\lambda'_i}$ and ratios  $\tau'_i:= \frac{\mu'_i}{\lambda'_i}$ in $R_{s'}$), we have 
\begin{align*}
    N(0,y)&=\left[\sum_{i=1}^r \left( \frac{\mathfrak{d}_i}{\rho_i -y}  -  \frac{\mathfrak{d}_i}{\rho_{0i} -y} \right)\right]
\left[\sum_{i=1}^r \left( \frac{\mathfrak{d}_i\tau'_i}{\rho'_i -y}  -  \frac{\mathfrak{d}_i\tau_{0i}}{\rho_{0i} -y} \right)\right]
\\
&-
\left[\sum_{i=1}^r \left( \frac{\mathfrak{d}_i}{\rho'_i -y}  -  \frac{\mathfrak{d}_i}{\rho_{0i} -y} \right)\right]
\left[\sum_{i=1}^r \left( \frac{\mathfrak{d}_i\tau_i}{\rho_i -y}  -  \frac{\mathfrak{d}_i\tau_{0i}}{\rho_{0i} -y} \right)\right].
\end{align*}
If $N(0,y)$ is zero, then, as the $\rho_i, \rho_{0i},\rho'_i,\rho'_{0i}$ are mutually distinct, $\tau_i=\tau'_j =\tau_{0k} $  for any $i,j,k,$ i.e., 
all the $\tau$s are equal to some constant $c$. Then,  $R_s(x,y)=c$ which implies $R_s(t,\alpha)=\alpha' = c$, i.e.,   the flow curve is a line 
which is ruled out.
 In conclusion, there is  a non-trivial polynomial, $N$, such that $N(t,\alpha)=0$;
$\alpha$ is then an algebraic function   (see for example \cite{FK}), which  extends to a $d$-valued holomorphic algebraic function still denoted by $\alpha$
on $\mathbb{C}$  and on  $\mathbb{S}^2:=\mathbb{C}\cup \{\infty\}$, where $d\geq 1$ is the degree of the polynomial of $P(t,\alpha)$ with respect to  $\alpha$
 defining a ramified cover of  $\mathbb{C}$ or   $\mathbb{S}^2$ of degree $d$. 
  \end{proof}
In this final paragraph, we show that $\alpha$ cannot be algebraic  by  investigating the behavior, at infinite points,  of the 
 solution on $\mathbb{C}\cup\{\infty\}$ of  the ODE  \eqref{lambda100}. Together  with the previous lemma, this will lead to 
a contradiction and will show that the principal curvatures along the level sets $U_t$ must be  constant.
  \begin{lem}\label{casAlgebrique}  There exists no  nonzero  algebraic function 
 solution of the ODE and initial conditions  \eqref{lambda100}
 with a pole at some finite point if $r_0 :=\sum_{ i=1}^{n-1}\frac{\mu_{0i}}{\lambda_{0i}}\neq 0.$
 Similarly, there is no pole at infinity if $s_0 :=\sum_ {i=1}^{n-1}\frac{\lambda_{0i}}{\mu_{0i}}\neq 0.$
\end{lem}
\begin{proof}
Notice first that we can choose  $\mu_i$ to be non zero by a small perturbation of the level set
(see Lemma \ref{courbureVar1}).
 Suppose that $\alpha$ is a (possibly multi-valued) algebraic 
meromorphic  map on $\mathbb{C}$ which satisfies the   ODE   \eqref{lambda100}.
Consider   the behavior of $\alpha$ at a point $t_0\in \mathbb{C}$, where $\alpha$ is infinite. Then,
$\alpha \sim  a u^\nu$ and  $\nu <0,$ when  $t\mapsto t_0$, i.e.,  when $u:=t-t_0 \mapsto 0$. 
At $u=0$, the LHS of the ODE \eqref{lambda100} behaves like $\frac{(\nu-1)}{\nu a u^\nu}$;   hence, it is finite, and even zero. 
However,   the Puiseux  expansion of the  RHS   yields
$$ r_0\nu u^{-1}   +O( u^{-\nu}).$$
 Thus,  a  meromorphic solution to \eqref{lambda100}  which has a pole on $\mathbb{C}$  must satisfy $r_0=0$. 
 \end{proof}
 To end  the proof of Proposition \ref{para5}, let us show that, 
by  performing  a small perturbation along the flow curve,  $r_0$ can be chosen nonzero. This shows that there is 
no non-trivial algebraic solution to equation \eqref{lambda100} with the given initial data.\\
For $\epsilon$ sufficiently small, consider the flow curve  starting at the point $p_1:=(t_0,\alpha(t_0))$, 
for a small $|t_0| <\epsilon$,  and expressed in the rotated  frame 
$\{e_{t_0},N_{t_0}\}$. The flow curve is then given by the position vector $m_{p_1} (u) = ue_{t_0} +\alpha_1(u)N_{t_0}$
with the initial conditions $\alpha_1(0) = \alpha_1'(0) = 0$, where $\alpha_1$ is the solution of the
ODE \eqref{lambda100} in which   the set of coefficients  $\{\lambda_{0i},\mu_{0i}\}_{i =1,\ldots, n-1}$ are  now replaced
by the set of coefficients  $\{\lambda_{t_0i},\mu_{t_0i}\}_{i =1,\ldots, n-1}$, and with the same initial conditions as before.\\
Also, applying the equations given in Lemma \ref{courbureVar1},
 we can measure the variation of the coefficient $r_t$ along the flow curve. We have
 \begin{equation}
r_t =\sum_{\lambda_{0i}\neq 0} \frac{\mu_{ti}}{\lambda_{ti}} = 
\sum_{\lambda_{0i}\neq 0} \frac{\mu_{0i}+\alpha'(t)\lambda_{0i}}{\lambda_{0i}-\alpha'(t)\mu_{0i}}.
\end{equation}
As $\alpha'(t)=a_1t+o(t)$, where $ a_1\neq 0$ since $\alpha''(0)\neq 0$, 
the expansion of $r_t$  for small $t$ yields
\begin{equation}
r_t =\sum_{\lambda_{0i}\neq 0}\frac{\mu_{0i}}{\lambda_{0i}} +
 \alpha'(t)\sum_{\lambda_{0i}\neq 0}\left( \frac{\lambda_{0i}^2+\mu_{0i}^2}{\lambda_{0i}^2}    \right) +o(t),
\end{equation}
or, in our case, since $r_0=0$,
\begin{equation}
r_t = 
a_1t\sum_{\lambda_{0i}\neq 0}\left( \frac{\lambda_{0i}^2+\mu_{0i}^2}{\lambda_{0i}^2}    \right)+o(t),
\end{equation}
which is nonzero for a sufficiently small positive $t_0$. Applying  Lemma \ref{casAlgebrique},
we deduce that, by a small perturbation of the starting level set, 
the solution $\alpha$ is holomorphic on $\mathbb{C}$.\\
Now, we consider the behavior of  a solution  at infinity of  the same    ODE  with the same  initial conditions   \eqref{lambda100}.
$\alpha$ has necessarily a pole at infinity; otherwise,  $\alpha$ would be bounded and hence constant,  which cannot be
the case, thus $\alpha \sim  a x^\nu, \nu>0$ as  $x \mapsto \infty$.
Comparing  the LHS and the RHS of the ODE \eqref{lambda100} at infinity,
 we deduce that the only possible behavior is a pole of order one for each branch, where the value of $\alpha$  is infinite.
There, using the  change of variable $u= \frac{1}{x}$, we have  $\alpha \sim a u$ as $u\mapsto 0$. The LHS 
behaves like $\frac{2u^3a}{1+a^2}+O(u^3) $  as  the RHS  
 asymptotic expansion at infinity  yields  
$$ -s_0 u  + O(u^2):= \sum_i \frac{\lambda_{0i}}{\mu_{0i}} u + O(u^2). $$
 Then, we  deduce   that there is no non-trivial algebraic $\alpha$ satisfying the ODE with the initial conditions  \eqref{lambda100}  when 
$r_0\neq 0$ and $s_0\neq 0$.\end{proof}
We  finally conclude  from Lemmas \ref{casTranscendant}, \ref{reductionAuCasAlgebrique} and \ref{casAlgebrique}    that 
the principal curvatures of the $h$-level set $U_0$,
 or possibly for the $h$-level sets $U_\epsilon$ for some small  perturbation ($\epsilon_0>\epsilon>0$)
  must be  constant. If $s_0=0$, then, replacing $(\lambda_i,\mu_i)$ by $(-\mu_i,\lambda_i)$ in the computation of $r_t$ 
  also gives $s_t\neq 0$. Moreover,  the normal bundles are  flat by Lemma \ref{para1}, 
so  $ U_{t}$  is isoparametric (cf. the introduction).   Notice that, once we know that $U_0$ is isoparametric,
 then  $U_t$ is  isoparametric for any sufficiently small $t$, since they are parallel to $U_0$: $U_t = U_0 +\zeta_{t}$, where  $\zeta_{t}$ is a normal parallel vector field (see   Lemma \ref{para4}),  as long as 
no point reaches a focal point on a focal line of $U_0$.
 \section{On    curvatures   of  the $h$- level sets $U_t$   of  BCHs }\label{curvatureLevelSets}
We compute the variation of the curvatures of 
 the isoparametric level sets $U_t$ along the flow curve (see Lemma \ref{para4}).
\begin{prop}\label{evolution}
 Let $\lambda_i, i=1,\ldots n-1$ 
 (respectively  $\mu_i, i=1,\ldots n-1$) 
 be the principal curvatures  of the shape operator $A_N$ (respectively    of the shape operator $A_{e_n}$)
  of an $h$-level surface $U_t$ of a proper BCH $\Sigma^n$.\\
 Then, the  Riccati equation yields 
\begin{equation}\label{Riccati10}
 \mu_{i,n} = \mu_i^2 + \lambda_i\lambda_n, \quad  i\in \{1,\cdots,n-1\},
\end{equation}
  and the  Codazzi equation yields
\begin{equation}\label{Codazzi10}
 \lambda_{i,n}= \mu_{i} (\lambda_i -\lambda_n),  \quad  i\in \{1,\cdots,n-1\}.
\end{equation}
\end{prop}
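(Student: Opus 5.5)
The plan is to reduce both identities to differentiating the explicit closed forms of Lemma \ref{courbureVar2} along a single flow curve, rather than to work directly with the Codazzi and Gauss--Ricci equations on $\Sigma^n$. Fix $p\in U_0$ and consider the flow curve $\zeta(s)$, parametrized by arc length $s$, in the fixed affine plane $p+span(e_{0n},N_0)$; this is legitimate by Lemma \ref{para3} and Proposition \ref{paraODE}. Along this curve $e_n=T=\zeta_{,s}$, and the derivative $f_{,n}$ of any function in the direction $e_n$ is just $\frac{d}{ds}f$. Since the level sets $U_t$ are parallel to $U_0$ by Lemma \ref{para4}, and since by Proposition \ref{para5} the curvatures depend only on the leaf, it suffices to prove both identities along one flow curve.

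The first ingredient is the Frenet-type system of the planar curve inside the moving frame $\{T,N\}$. Lemma \ref{para3} gives $D_T N=-\lambda_n T$, and equation \eqref{lambda2} identifies $\lambda_n=\frac{d\theta}{ds}$, where $T=e^{i\theta}$ and $N=JT$. Hence
$$\frac{dT}{ds}=\lambda_n N,\qquad \frac{dN}{ds}=-\lambda_n T.$$
I will check the sign carefully against $N_{,t}=-\lambda_n X_{,t}$ from the proof of Proposition \ref{paraODE}; this sign consistency is the only delicate point I expect in the whole argument.

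The second ingredient is the evolution of the normal curvature vectors. By Lemma \ref{courbureVar2}, $n_i=\frac{n_{0i}}{1-\langle n_{0i},\zeta\rangle}$, where $n_{0i}$ is a fixed vector in the plane. Using $\zeta_{,s}=T$, differentiation gives
$$\frac{dn_i}{ds}=\frac{\langle n_{0i},T\rangle\, n_{0i}}{(1-\langle n_{0i},\zeta\rangle)^2}=\langle n_i,T\rangle\, n_i=\mu_i\, n_i .$$

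Finally, write $\mu_i=\langle n_i,T\rangle$ and $\lambda_i=\langle n_i,N\rangle$, again by Lemma \ref{courbureVar2}, and apply the product rule. For the first identity,
$$\mu_{i,n}=\langle \mu_i n_i,T\rangle+\langle n_i,\lambda_n N\rangle=\mu_i^2+\lambda_i\lambda_n ,$$
which is \eqref{Riccati10}. For the second,
$$\lambda_{i,n}=\langle\mu_i n_i,N\rangle+\langle n_i,-\lambda_n T\rangle=\mu_i\lambda_i-\lambda_n\mu_i ,$$
which is \eqref{Codazzi10}.

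As a cross-check I would also derive the two identities intrinsically, to justify the names in the statement. The Codazzi equation $(\nabla_{e_n}A)e_i=(\nabla_{e_i}A)e_n$, combined with \eqref{mu} (that is, $\nabla_{e_i}e_n=-\mu_i e_i$) and the fact that $e_n$ is a geodesic field (Lemma \ref{para3}), should reproduce $\lambda_{i,n}=\mu_i(\lambda_i-\lambda_n)$. The Riccati equation for the shape operator $A_{e_n}$ of the level sets of $h$ inside $\Sigma^n$, which reads $\nabla_{e_n}A_{e_n}=A_{e_n}^2+R(\cdot,e_n)e_n$ with the Gauss equation giving sectional curvature $\lambda_i\lambda_n$, should reproduce the first identity.
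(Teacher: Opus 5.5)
Your proof is correct, but it follows a different route from the paper's. The paper argues intrinsically on $\Sigma^n$. For \eqref{Riccati10} it applies the Riccati equation $A_{e_n}'=A_{e_n}^2+R_{e_n}$ to the leaves $U_t$, viewed as hypersurfaces of $\Sigma^n$ with unit geodesic normal $e_n$, and takes $R(e_i,e_n)e_n=\lambda_n\lambda_i e_i$ from the Gauss equation. For \eqref{Codazzi10} it specializes the Codazzi identity \eqref{total} to $\eta=e_n$, $X=e_i$, which gives $\mu_i e_i=-\nabla_{e_i}e_n=\frac{e_n(\lambda_i)}{\lambda_i-\lambda_n}e_i$. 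That is exactly what you relegate to your closing cross-check. Your main argument instead works extrinsically in the fixed normal plane. By Lemma \ref{courbureVar2}, the vector $n_i=n_{0i}/(1-\langle n_{0i},\zeta\rangle)$ is only rescaled along the flow, $\frac{dn_i}{ds}=\mu_i n_i$, while the frame $\{T,N\}$ rotates at rate $\lambda_n$. The two identities are the $T$- and $N$-components of this motion, and your computation is right, signs included.

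The paper's version uses only general pointwise identities together with local facts from Lemmas \ref{para1} and \ref{para3}: $e_n$ is a unit geodesic principal field and $\nabla_{e_i}e_n=-\mu_i e_i$. So it needs none of the explicit formulas of Lemma \ref{courbureVar1}. Your version relies on that heavier structure: planar flow curves, $\alpha=\alpha(t)$ from Lemma \ref{para4}, and the parallel-submanifold formula. In return it is a two-line computation that makes the mechanism transparent, and it shows that \eqref{Riccati10}--\eqref{Codazzi10} are just the infinitesimal form of Lemma \ref{courbureVar1}.

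Three small points:
\begin{itemize}
\item You do not need Proposition \ref{para5}. Your computation runs along every flow curve, so it holds pointwise, and it is better to avoid that proposition since it is the most delicate result in the paper.
\item The sign you flagged is forced: $\frac{dT}{ds}=\lambda_n N$ follows from $\frac{dN}{ds}=-\lambda_n T$ by differentiating $\langle T,N\rangle=0$ inside the plane, with no appeal to \eqref{lambda2}.
\item You should say explicitly that, because $X_{,i}=\beta_i Y_{,i}$, the eigendirections are constant along each flow curve. This is what makes $\frac{d}{ds}\mu_i(\zeta(s))$ equal to $\mu_{i,n}$.
\end{itemize}
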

\begin{proof} Let us first prove the first  order ODE \eqref{Riccati10}.
 The level sets of  $h$  are, equivalently,  the level sets of the principal curvature $\lambda_n$, which is
the principal curvature along a flow  curve.
The length  element along the  curve, $ds = \sqrt{1+\alpha'(t)^2} dt$, depends only on the parameter  $t$. Hence, the level sets of $h$ 
are equidistant, and $|\nabla h|$ is constant on each $h$-leaf. \\
The variation of the principal curvatures will be given by the Riccati and  the Codazzi equations.
The Riccati equation of the second fundamental form $A_{e_n}$ of $U$, as a hypersurface of $\Sigma^n$, is
$$A_{e_n}' = A_{e_n}^2 +R_{e_n},$$ 
where $R_{e_n}(e_i) = R(e_i,e_n)e_n, i =1,\ldots n-1,$ and $R_{e_n}$  is the $e_n$-directional  curvature  operator (see for example \cite{K}). \\
 Gauss equation of  $\Sigma^n$, as  a hypersurface of $\mathbb{E}^{n+1}$, gives: 
$$R(X,Y)Z =(A_NX\wedge A_NY)Z:=\langle A_NY,Z\rangle A_NX -\langle A_NX,Z\rangle A_NY,$$
 (see for example \cite{DT}). Hence, 
$$R_{e_n}(e_i)= R(e_i,e_n)e_n = \lambda_n\lambda_i  e_i. $$
Thus, we get the  evolution of the principal curvature in the $e_n$-direction
   $$\mu_{i,n} = \mu_i^2 + \lambda_i\lambda_n, $$
and so we get also the evolution of   $M_1 := \sum_1^{n-1} \mu_i $ 
$$M_{1,n}=  M_2 -3\lambda_n^2 :=\sum_1^{n-1}\mu_i^2 -3\lambda_n^2.$$
The second ODE \eqref{Codazzi10} follows   from  the  Codazzi equation \eqref{total}
 $$ \mu_{i} e_i = A_{e_n}{e_i} = - \nabla_{e_i} e_n = \frac{\nabla_{e_n} \lambda_i}{\lambda_i -\lambda_n}e_i, 
 $$ 
 so
 $$  \mu_{i} =   \frac{ \lambda_{i,n}}{\lambda_i -\lambda_n}.$$ \end{proof}
  \section{Proof of   Theorem \ref{letheoreme1}  }\label{constructionBCH}
  From the  preceding lemmas  \ref{para1}  to  \ref{paraODE}  and Proposition \ref{para5}, a   proper BCH  $\Sigma^n $ in $ \mathbb{E}^{n+1}$ is  foliated by 
the level sets of the mean curvature function  $h$ of $\Sigma^n$, which  are isoparametric codimension
 two submanifolds  of $\mathbb{E}^{n+1}$ in 
a neighbourhood  of a regular point of $h$.\\
The symmetries of the leaves extend to symmetries on $\Sigma^n$.
Indeed,  Let $G$ be the subgroup  (possibly trivial) of the ambient isometries of $\mathbb{E}^{n+1} $ that preserve $U_0$.
Let $g\in G$  and $p\in \Sigma^n$, then $p $ belongs to an integral curve of $\nabla h$
that cuts $U_0$ at a point $p_0 \in U_0$. Let $q_0 :=g(p_0)\in U_0$, the normal plane,  $N_{p_0}U_0$, is sent by the isometry $g$ 
to $g(N_{p_0}U_0)= N_{q_0}U_0$, and  the normalized mean curvature at $p_0$, $N(p_0)$, 
is sent to   $ N(q_0)$. Hence,  we also have  $g(e_n(p_0)) = e_n( q_0)$. By  the uniqueness of the ODE \eqref{lambda100}, with given
initial conditions and by Lemma \ref{para5},
 the  integral curve $N_{p_0}U_0 \cap \Sigma^n$ is sent to
$g(N_{p_0}U_0 \cap \Sigma^n) = N_{q_0}U_0 \cap \Sigma^n.$ 
Consequently, $g(\Sigma^n)=\Sigma^n$.\\
 Conversely,  let $U_0$  be a given nonempty  open set of an    isoparametric submanifold $ U_0\subset \mathbb{E}^{n+1}$
 of codimension 2 with nonzero mean curvature vector field. Then,
 there exists  a nonempty  open subdomain $V_0\subset U_0$   and a  
proper BCH   diffeomorphic to $V_0\times I$, ($I$ open interval of $\mathbb{R}$), 
 obtained by a local normal flow  of submanifolds parallel to $U_0$ in the normal bundle of $U_0$ that preserves the symmetry group of $U_0$.

 \subsection{Existence of proper BCHs}
Let us construct a  BCH once  an isoparametric submanifold of codimension 2 is given.
The  second order ODE \eqref{lambda100} is of the form 
$ \alpha'' =  R(x_n,\alpha,\alpha')$,  where $R(x,y,z)$ is a  rational function  of its variables, 
 which is analytic  on  $\mathbb{R}^2 \setminus L$, where $L$ is the union of focal lines
 $ \bigcup _{i=1}^{n-1} l_{0i} = \bigcup_{i=1}^{n-1} \{ v=(x,y) \in \mathbb{R}^2 : 1-\langle n_{0i},v\rangle = 0 \}$, and where 
\begin{equation}\label{R}
R(x,y,z) = -\frac{(1+z^2)}{3}\sum_{i=1}^{n-1}\frac{\lambda_{0i}+z \mu_{0i}}{1-\lambda_{0i}y-x\mu_{0i}}.
\end{equation}
Hence, $\alpha$ is  locally the unique   solution with, by construction,
the initial conditions $\alpha(0)=0,$  $\alpha'(0)=0$ and $\alpha"(0)\neq 0$, as  the flow curve starting at the origin $O$, is tangent to $e_n$ at $O$
and $h\neq 0$.\\
 Given an isoparametric immersion, $ Y: D \longrightarrow U_0\subset \mathbb{E}^{n+1}$, of codimension 2. 
 Let $\{e_{0n}, N_0\}$ be the  parallel frame field of its normal bundle, and let $H_U$  denotes its mean curvature vector field.
 We consider the immersion 
 \begin{equation}
 \begin{array}{ll}
 X: D\times I& \longrightarrow \mathbb{E}^{n+1}\\
 (p,t)&\mapsto Y(p) + \zeta(p,t),
 \end{array}
 \end{equation} 
 where 
 $\zeta(p,s) $ is the solution of  ODE \eqref{lambda120} (which is a different way of writing the ODE   \eqref{lambda100}). Let us check that $X$ is a BCH.\\
 It suffices to verify that $e_n$,  the unit normal vector field $\frac{\nabla h}{|\nabla h|}$ on an open subdomain, where  $h\ {\rm and} \nabla h \neq 0$,
 is a principal curvature direction of principal curvature equal to $-\frac{\Lambda_1}{3}$.\\
 Indeed, it is clear that  planar vectors $\zeta_{,ss} \perp \zeta_{,s}$,  so  from  \eqref{lambda120}, we have 
 $$\lambda_n   = \left\langle ( D_{\zeta_{,s} } \zeta_{,s}),  J(\zeta_{,s})   \right\rangle= 
  \left\langle \zeta_{,ss} ,  J(\zeta_{,s})   \right\rangle= -\left\langle \frac{(n-1)}{3}H_U, J(\zeta_{,s})\right\rangle= -\frac{\Lambda_1}{3}.$$
  Notice that, the same reasoning applies to the ODE \eqref{lambda100}, where the coefficient $-1/3$ is replaced by $-1$,
  so that $\lambda_n = -\Lambda_1$, thus, generating  minimal hypersurfaces instead of BCHs.
  \begin{cor}
  Let $Y:U \mapsto \mathbb{E}^{n+1}$ be a codimension 2 open isoparametric submanifold of $\mathbb{E}^{n+1}$. 
  Then, there is a nonempty  open subset $V\subset U$, a nonempty interval $I:=[0,\epsilon[$ and a minimal immersion $X: V\times I \mapsto \mathbb{E}^{n+1}$ such that 
  $X(V\times \{0\} )=Y(V)$.
  \end{cor}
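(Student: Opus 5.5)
Write the candidate hypersurface as
\[
X(p,t)=Y(p)+\zeta(p,t),\qquad \zeta(p,t)=t\,e_{0n}(p)+\alpha(t)N_0(p),
\]
exactly as in Lemma \ref{para4}. The parallel normal frame $\{e_{0n},N_0\}$ of $Y$ exists because the normal bundle of an isoparametric submanifold is flat. The planar profile $\alpha$ is the solution of the ODE \eqref{lambda100} with the coefficient $-\frac13$ replaced by $-1$, i.e. $\alpha''=-(1+\alpha'^2)\sum_i\frac{\lambda_{0i}-\alpha'\mu_{0i}}{1-\alpha\lambda_{0i}-t\mu_{0i}}$, with $\alpha(0)=\alpha'(0)=0$. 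Because $Y$ is isoparametric, the $\lambda_{0i},\mu_{0i}$ are constant along $U$. Hence the right-hand side $R(x,y,z)$ of \eqref{R} (with $\frac13$ replaced by $1$) is the same rational function at every point of $U$, and it is analytic near $(0,0,0)$ because $1-\langle n_{0i},0\rangle=1\neq0$. Cauchy–Lipschitz then gives a unique solution $\alpha$ on some $[0,\epsilon[$, and it is independent of $p$.

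Next we check that $X$ is an immersion. The computation \eqref{metriqueEvolution} gives $X_{,i}=\beta_iY_{,i}$ with $\beta_i=1-\alpha\lambda_{0i}-t\mu_{0i}$, and $X_{,t}=\alpha'N_0+e_{0n}$. For $t$ small we have $\beta_i$ close to $1$, so the vectors are independent. To have $\beta_i$ bounded away from $0$ uniformly, restrict to a relatively compact open $V\subset U$ and shrink $\epsilon$. This is the open set $V$ of the statement, and it also yields $X(V\times\{0\})=Y(V)$.

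Then we identify the principal curvatures. By Lemma \ref{courbureVar1} and Lemma \ref{courbureVar2}, the unit normal is $N=\gamma(N_0-\alpha'e_{0n})$. The slices $U_t=U_0+\zeta_t$ are parallel submanifolds, and along the directions $Y_{,i}$ the normal $N$ satisfies $N_{,i}=-\lambda_iX_{,i}$. So each $e_i$ is a principal direction with $\lambda_i=\gamma(\lambda_{0i}-\alpha'\mu_{0i})/\beta_i$. Differentiating $N$ in $t$ as in the proof of Proposition \ref{paraODE} gives $N_{,t}=-\lambda_nX_{,t}$ with $\lambda_n=\alpha''/(1+\alpha'^2)^{3/2}$. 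Thus $\{e_1,\dots,e_{n-1},e_n\}$ diagonalizes $A_N$, with $e_n=X_{,t}/|X_{,t}|$.

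Finally, $nh=\Lambda_1+\lambda_n$ with $\Lambda_1=\gamma\sum_i(\lambda_{0i}-\alpha'\mu_{0i})/\beta_i$. The modified ODE says precisely $\lambda_n=-\Lambda_1$, hence $h\equiv0$ and $X$ is minimal. Equivalently, in the form \eqref{lambda120} with $\frac{n-1}{3}$ replaced by $n-1$, the ODE reads $\langle\zeta_{,ss}+(n-1)H_U(\zeta),J\zeta_{,s}\rangle=0$, which is exactly the vanishing of $\langle\Delta X,N\rangle$. The only delicate point is that all computations in Lemma \ref{courbureVar1} use only that $\zeta$ is a parallel normal field with $p$-independent profile, not the BCH hypothesis itself. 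We will check this, and it is guaranteed by constancy of $\lambda_{0i},\mu_{0i}$. Note also that $\alpha''(0)=-\Lambda_1(0)$, which may vanish, but no properness is needed here: minimality holds regardless.
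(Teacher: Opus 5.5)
Your proposal is correct and follows the paper's own argument. The paper proves the corollary by remarking that, in the construction $X=Y+\zeta$ used for BCHs, replacing the coefficient $-\frac13$ in \eqref{lambda100} by $-1$ gives $\lambda_n=-\Lambda_1$, hence $h\equiv 0$. You supply the details the paper leaves implicit: $p$-independence of $\alpha$ from the constancy of the curvature normals $n_{0i}$, the condition $\beta_i\neq 0$ for the immersion property, and the fact that Lemma \ref{courbureVar1} never uses the BCH hypothesis. One minor remark: since $\beta_i(t)=1-\alpha(t)\lambda_{0i}-t\mu_{0i}$ does not depend on $p$, you do not need to shrink $V$ for the immersion property. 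You only need to shrink it to have a globally defined parallel normal frame $\{e_{0n},N_0\}$, e.g.\ by taking $V$ a small ball.
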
 
     \subsection{Examples of BCHs}\label{examplesOfBch}
       \begin{prop} \label{SigmaU} Let  $U_0$ be  an  isoparametric  hypersurface of
        $\mathbb{E}^n \subset \mathbb{E}^{n+1}$. There exists a proper   BCH   
containing $U_0$   and whose  $n$-plane containing $U_0$ is a plane of  symmetry.
\end{prop}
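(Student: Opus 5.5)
The plan is to realize $U_0$ as a codimension $2$ isoparametric submanifold of $\mathbb{E}^{n+1}$ and then apply the existence construction of Section \ref{constructionBCH}. The symmetry will come from reflection invariance of the ODE \eqref{lambda100}.

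\textbf{Step 1: $U_0$ as a codimension $2$ isoparametric submanifold.} Let $U_0\subset\mathbb{E}^n=\{x_{n+1}=0\}$ be an isoparametric hypersurface with unit normal $\nu$ in $\mathbb{E}^n$. Regarded in $\mathbb{E}^{n+1}$, its normal bundle is spanned by $\nu$ and the constant vector $E_{n+1}$. Both are parallel: $E_{n+1}$ is constant, and $\langle D\nu,E_{n+1}\rangle=0$. So the normal bundle is flat. The shape operator along $E_{n+1}$ vanishes, and along $\nu$ it has the constant eigenvalues of $U_0$ in $\mathbb{E}^n$. Hence $U_0$ is isoparametric in Terng's sense. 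Its mean curvature vector is nonzero unless $U_0$ is minimal in $\mathbb{E}^n$, i.e.\ a hyperplane; that degenerate case I would treat separately, or exclude as giving no proper BCH.

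\textbf{Step 2: choice of the parallel frame.} This is the key choice. I would take $N_0:=\nu$ as the initial normal of the hypersurface to be built, and $e_{0n}:=E_{n+1}$ as the initial flow direction. Then $\lambda_{0i}$ are the principal curvatures of $U_0$ in $\mathbb{E}^n$, and $\mu_{0i}=0$. The ODE \eqref{lambda100} becomes
$$\frac{\alpha''}{1+\alpha'^2}=-\frac13\sum_{i=1}^{n-1}\frac{\lambda_{0i}}{1-\alpha\lambda_{0i}},\qquad \alpha(0)=\alpha'(0)=0.$$
The right-hand side at $t=0$ is $-\Lambda_1/3\neq0$, so $\alpha''(0)\ne0$. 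The solution exists near $t=0$ and avoids the focal lines. The existence part of Section \ref{constructionBCH} then gives the BCH
$$X(p,t)=Y(p)+\alpha(t)\nu(p)+tE_{n+1}.$$
It contains $U_0$ at $t=0$. It is proper because $\lambda_n=\alpha''/(1+\alpha'^2)^{3/2}$ varies along the flow curve, as in Lemma \ref{courbureVar1}. A short check is needed that $h$ is not constant. I would argue from $\alpha''(0)\neq0$ and $\Lambda_1(t)$ changing through $\beta_i=1-\alpha\lambda_{0i}$; if necessary, perturb $t$ slightly.

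\textbf{Step 3: symmetry.} The ODE is autonomous with $\mu_{0i}=0$ and invariant under $t\mapsto -t$. If $\alpha(t)$ is a solution, so is $\alpha(-t)$, with the same initial data. By uniqueness, $\alpha$ is even. The reflection $x_{n+1}\mapsto -x_{n+1}$ sends $X(p,t)$ to $X(p,-t)$, so the hypersurface is invariant under it, and $\mathbb{E}^n$ is a plane of symmetry.

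\textbf{Main obstacle.} The main difficulty is verifying that the constructed hypersurface is genuinely proper rather than CMC. Away from $t=0$, we need $\nabla h\neq0$ so that the BCH verification via $\lambda_n=-\Lambda_1/3$ applies. At $t=0$ itself, evenness forces $h$ to have a critical point along each flow curve. So properness has to be argued on $t\neq0$ using the expansion $\alpha=\tfrac{a}{2}t^2+\dots$ with $a\neq0$. From this, $h'(t)$ behaves like $c\,t$ with $c$ a positive multiple of $a\Lambda_2$, which is nonzero.
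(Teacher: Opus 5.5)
Your proof is correct, and its setup is the paper's. Both choose $e_{0n}=E_{n+1}$ and $N_0=\nu$, so that $\mu_{0i}=0$ and \eqref{lambda100} becomes the autonomous equation $\alpha''/(1+\alpha'^2)=-\frac13\sum_i\lambda_{0i}/(1-\lambda_{0i}\alpha)$.

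The routes differ in how this equation is used. The paper integrates it by quadrature. Multiplying by $\alpha'$ gives the first integral $1+\alpha'^2=\prod_i(1-\lambda_{0i}\alpha)^{2/3}$, then $t=\pm\int_0^\alpha dx/\sqrt{\prod_i(1-\lambda_{0i}x)^{2/3}-1}$, and the symmetry with respect to $\mathbb{E}^n$ is left implicit in the sign $\pm$. You instead get evenness of $\alpha$ from the invariance of the equation under $t\mapsto -t$ plus uniqueness, and then check the reflection directly on $X(p,t)$.

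What each route buys:
\begin{itemize}
\item Your argument is shorter and states the symmetry explicitly. It also avoids gluing the two branches of $t(\alpha)$ at the turning point $\alpha=0$, where $t\sim\pm c\sqrt{\alpha}$ is not smooth.
\item The quadrature gives closed formulas, such as the elliptic integral \eqref{elliptique} in the rotational case. It also makes properness nearly immediate: $\lambda_n=\alpha''/(1+\alpha'^2)^{3/2}$ becomes an explicit function of $\alpha$ whose derivative at $\alpha=0$ is, up to sign, $\frac13(\Lambda_2+\Lambda_1^2/3)\neq 0$.
\end{itemize}

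Two small corrections to your ``main obstacle'' paragraph.
\begin{itemize}
\item The BCH verification does not need $\nabla h\neq 0$. Since $h$ depends on $t$ only, $\nabla h$ is everywhere a multiple of the principal direction $e_n$, with $\lambda_n=-\Lambda_1/3=-nh/2$. Hence $(\Delta H)^\top=-(2\lambda_n+nh)\nabla h\equiv 0$ on the whole hypersurface, including $U_0$.
\item In your expansion, the factor $\gamma=(1+\alpha'^2)^{-1/2}$ also contributes at order $t^2$. The exact value is $h''(0)=\frac{2a}{3n}(\Lambda_2+\Lambda_1^2/3)$ with $a=\alpha''(0)=-\Lambda_1/3$. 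This is still nonzero, so your conclusion stands.
\end{itemize}
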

 \begin{proof} We refer to the notations of Subsection \ref{ODE0}.
We consider the parametrization \eqref{surRev2}, where $ e_n$ is  a unit normal vector field to the n-plane containing $U_0$. 
The principal curvatures $\mu_i$, i.e., the eigenvalues of the shape operator $A_{ e_n} $, are zero on $U_0$. Hence, 
the second order ODE  \eqref{lambda100}, which characterizes the BCH  becomes
\begin{equation}\label{lambda101}
\frac{\alpha''}{1+\alpha'^2}=-\frac{1}{3}\sum_{i=1}^{n-1}\frac{\lambda_{0i}}{1-\lambda_{0i}\alpha}= R(\alpha).
\end{equation}
Multiplying by $\alpha'$ allows us to obtain a first quadrature 
 $$1+\alpha'^2 = C_2\prod_{i=1}^{n-1} (1-\lambda_{0i}\alpha)^{2/3}.$$
 The initial conditions,
$\alpha(0) = \alpha'(0)=0$, imply $C_2=1$. Choosing the orientation of the normal vector field $N$, such that $h<0$, we then  obtain $ \prod_{i=1}^{n-1}  (1 -\lambda_{0i}\alpha)^{2/3} -1$
is nonnegative for $\alpha$ positive and
\begin{equation}\label{alphaPrime}
 \alpha' = \pm\sqrt{ \prod_{i=1}^{n-1}  (1 -\lambda_{0i}\alpha)^{2/3} -1},
 \end{equation}
  so 
   $$ \frac{d\alpha}{  \sqrt{ \prod_{i=1}^{n-1}  (1 -\lambda_{0i}\alpha)^{2/3} -1}} = \pm dt.$$
 A second integration yields an expression of the inverse  of the function $t = f(\alpha)$, and the value of $\lambda_n$
 from equation \eqref{lambda2} is
 \begin{equation}\label{integrale1}
t = C_3 \pm \int_0^\alpha  \frac{dx}{  \sqrt{ \prod_{i=1}^{n-1}  (1 -\lambda_{0i}x)^{2/3} -1}} \ {\rm and} \
\lambda_n = 
\frac{1}{3}\left(\sum_{i=1}^{n-1} \frac{\lambda_{0i}}{1-\lambda_{0i}\alpha}\right) \prod_{i=1}^{n-1}(1-\alpha\lambda_{0i})^{-1/3}.
 \end{equation}
 
The initial condition
$\alpha(0)  =0$, implies $C_3=0.$
  Notice that, the solution, $x$, of equation \eqref{integrale1}  is an   integral, which is equivalent, near $0$, to $\int_0^\cdot \frac{dx}{  \sqrt{ x}}$. Hence, $x$ is  integrable if 
$\sum_1^{n-1} \lambda_i \neq 0$ (if
$\sum_1^{n-1} \lambda_i= 0$,  then  $\lambda_n=0$ and $\Sigma^n$ would be  minimal).  \end{proof} 
 \begin{figure}[h!]\label{fig1}
\begin{center}
\includegraphics[scale=0.45]{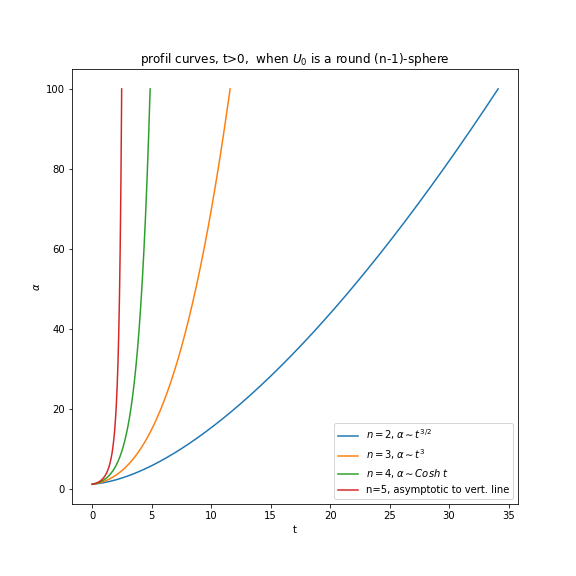} \\
\begin{center}
{\small Fig 2.  Profile curves of rotational BCHs }
\end{center}
\end{center}
\end{figure} 
Let   $U_0$ be  an $(n-1)$-sphere, that is, an orbit of the  
 subgroup $O(n)$ of isometries of the ambient Euclidean space $\mathbb{E}^{n+1}$. Then $U_0$ is isoparametric,  and  its
  principal   curvatures $\lambda_{0i}$'s are all equal to $\lambda_0$. Extending $U_0$ via $\alpha$,  we obtain
 BCHs that are  rotationally   invariant  ( cf. details in \cite{MOR} or \cite{N}, and see Fig. 2 for the profile curves).
 Notice also that  the curve $\alpha$  satisfies an algebraic, first order differential equation   with principal curvature $\lambda_n$, and  whose 
 inverse function   is an elliptic integral:
\begin{equation}\label{elliptique}
(1+\alpha'^2)^3 =(1-\lambda_0 \alpha)^{2(n-1)}, \quad  \lambda_n = 
\frac{(n-1)\lambda_0}{3(1-\lambda_{0}\alpha)^{\frac{n+2}{3}}}\ {\rm and}  \ \alpha^{-1}(t) = \int_0^t \frac{dx}{  \sqrt{    (1 -\lambda_{0}x)^{\frac{2(n-1)}{3}} -1}}.
\end{equation}

\section{Appendix:  Codazzi's equations   and  principal  curvature directions  }\label{cristoffel}
One deduces directly from the Codazzi equations two key properties  related to the
principal curvatures of a hypersurface $\Sigma^n\subset \mathbb{E}^{n+1}$.
 Recall that  the shape operator in the normal direction $N$, $A_N$,  decomposes $T\Sigma^n$ into distributions
 $T_i,$ where   $T_i = ker(A_N(p)-\lambda_i Id)$ and  $\lambda_i $
  is the corresponding eigenvalue (or principal curvature)   of multiplicity $rk(T_i)=m_i.$   We will select  an open
   subdomain  where  the multiplicities $m_i,\   i=1,\ldots, s,$ of the corresponding
  eigenvalues  are constant.  Then, we can split   the tangent bundle $T\Sigma^n$   into  a family of integrable
  distributions  $T_i\subset T\Sigma^n $.
Let
$X,Y \in T\Sigma^n $ two tangent vector fields, and let $A=A_N$ be the shape operator of $\Sigma^n$, then the Codazzi equation is
\begin{equation}\label{codazzi}
 \nabla_X(A)Y  =   \nabla_Y(A)X,
 \end{equation}
  where
 $\nabla_X(A)Y  = \nabla_X(AY) - A(\nabla_X Y).$
\begin{enumerate}
\item   Let  $T_\lambda$ be  the  distribution  which corresponds to the  eigenvalue $\lambda$ of the shape operator $A$  of $\Sigma^n\subset \mathbb{E}^{n+1}$, with locally constant rank larger than 2. Then, it is integrable
  and it defines a curvature surface $\mathcal{E}_\lambda$ on which $\lambda$ is constant.\\
  Indeed, let
$X,Y \in T_\lambda$ be linearly independent  vector fields in the given distribution $T_\lambda$. Then, by  the  Codazzi equation:
\begin{equation}\label{coda}
\nabla_X(A)Y = \nabla_Y(A)X,
\end{equation}
so
$$(X\lambda)Y -(Y\lambda)X = (A-\lambda Id)[X,Y].$$
As the LHS belongs to $ker(A-\lambda Id) $ and the RHS belongs to
$Im(A-\lambda Id)$, we obtain $X(\lambda)=Y(\lambda) = 0$ and $[X,Y] \in T_\lambda$.
 Thus, from Frobenius, the distribution associated to the principal eigenvalue $\lambda$ is indeed integrable into a curvature surface 
 $\mathcal{E}_\lambda$  onto which   $\lambda$ is constant.
\item   $\mathcal{E}_\lambda$  is totally umbilical, i.e., the second fundamental form of $\mathcal{E}_\lambda$  is equal to
$$II(X,Y) = \langle X, Y\rangle H,\quad  \forall X,Y \in TS(\lambda) ,$$
where $ H$ is the mean curvature vector field of $\mathcal{E}_\lambda$.\\
Indeed,
let $X$ be a  vector field in $T\mathcal{E}_\lambda$, and let $\eta\in T\Sigma^n$ be  a vector field normal to $\mathcal{E}_\lambda$
such that $A\eta =\mu\eta$ and $\mu \neq \lambda.$ Then,
 $$\nabla_X(A)\eta = \nabla_\eta(A)X.$$
 Expanding
 $$(\mu Id -A)(\nabla_X\eta) =
 (\eta\cdot\lambda)X- (A-\lambda Id) (\nabla_\eta X) - (X\cdot\mu)\eta.$$
 The projection  operator onto $T\mathcal{E}_\lambda$ commutes with $A$, thus
 $$(\mu Id -A)(\nabla_X\eta)^{T\mathcal{E}_\lambda} =
 (\eta\cdot\lambda)X- (A-\lambda Id) (\nabla_\eta X)^{T\mathcal{E}_\lambda} = (\eta\cdot\lambda)X.$$
Hence,
\begin{equation}\label{total}
(\nabla_X\eta)^{T\mathcal{E}_\lambda} =\frac{(\eta\cdot \lambda) X}{\mu - \lambda}.
\end{equation}
Let   $\{e_k\}_{k=1,\ldots ,l}$ be  an orthonormal  basis of $T\mathcal{E}_\lambda^\perp\cap T\Sigma^n$ in the principal curvature directions,
so that $(\nabla_{e_k} N )^{T\mathcal{E}_\lambda}= -\lambda_k e_k,$
where $N$ is a unit normal  vector  field to  $T\Sigma^n$, then the  second fundamental form of
$\mathcal{E}_\lambda \subset \mathbb{E}^{n+1},$
according to equation \eqref{total}, is  given by
$$II_\lambda(X,Y) = \sum_{k=1}^n - \langle \nabla_{X} e_k,Y\rangle e_k -  \langle \nabla_{X}N,Y\rangle N
=     \sum_{k=1}^n \frac{e_k\cdot\lambda}{\lambda-\lambda_k }\langle X,Y\rangle e_k  - \lambda  \langle X,Y\rangle  N.      $$
So
\begin{equation}    II_\lambda(X,Y)
 = \langle X,Y\rangle H_\lambda \  \text{and} \  H_\lambda = \sum_{i=1}^l  \frac{(e_i \cdot \lambda)}{\lambda_i - \lambda} e_i -
\lambda N,
\end{equation}
 where $H_\lambda$ is the mean vector field of $\mathcal{E}_\lambda\subset \mathbb{E}^{n+1}$.
$\mathcal{E}_\lambda$ is thus totally umbilical (see next Appendix \ref{spheres}).
\end{enumerate}
Let
$\Sigma^n$ be parametrized by  an  immersion
\begin{equation}
\begin{array}{cc}
X:&\Omega \subset \mathbb{R}^n\longrightarrow \Sigma^n \subset \mathbb{R}^{n+1} \\
&(x_1,\ldots,x_n) \mapsto X(x_1,\ldots,x_n),
\end{array}
\end{equation}
where $\Omega$ is an open domain, and
$(x_1,\ldots, x_n)$ form a system of orthogonal coordinates. Then, the metric $g$ in these coordinates is diagonalized as follows
\begin{equation}
g_{ij} =\delta_{ij}E_i= \delta_{ij}v_i^2 .
\end{equation}
The Christoffel Symbols of the metric $g$ are
$$\Gamma^i_{jk} = \frac{g^{ii}}{2}\left( g_{ki,j} +  g_{ij,k} -g_{jk,i}  \right) = \frac{1}{2E_i}\left( \delta_{ki}E_{k,j}  + \delta_{ij}E_{i,k} -\delta_{jk}E_{j,i}  \right). $$
If $i\neq j\neq k$,  then $\Gamma^i_{jk} = 0$.\\
Hence, the only possible  nonzero Christoffel symbols up to permutation on lower indices are
\begin{itemize}
   \item If $i = j \neq k$,  then
   $
   \Gamma^i_{ik} =  \frac{E_{i,k}}{2E_i} =(\log v_i)_{,k}.
   $
   \item If $j= k \neq i$, then $\Gamma^i_{jj} =  -\frac{E_{j,i}}{2E_i}=-(\log v_j)_{,i}\frac{v_j^2}{v_i^2}. $
\item If $j= k = i$, then $\Gamma^i_{ii} =  \frac{E_{i,i}}{2E_i}=(\log v_i)_{,i}$.
\end{itemize}
In the frame of this coordinate system, where
$e_i = \partial_i, \ i=1,\ldots, n$, the connection on $T\Sigma^n$ is given by the Cristoffel symbols
\begin{equation}\label{nabla}
\nabla_{e_i}e_k = \Gamma_{ik}^\alpha e_\alpha =\Gamma_{ik}^i e_i +\Gamma_{ik}^k e_k.
\end{equation}

In the holonomic case, when the curvature lines form a coordinate system,
the shape operator $A$
is diagonalized and related to the second fundamental form $II$  as follows
$$ A_i^j = \delta_i^j \lambda_i\ {\rm and}\ II_{ij} = A^\alpha_i g_{\alpha j}=  \delta_{ij}\mu_i,$$
where $\{\lambda_i\}_{i=1}^n$ are  the  principal curvatures
and  $\mu_i = \lambda_iv_i^2$.
The Codazzi equations   are equivalent to the following  system of
$n(n-1)$ equations:
\begin{equation}\label{codazzi}
\lambda_{i,j}-(\lambda_j -\lambda_i) \Gamma^i_{ij}=0,\quad \forall i\neq j\quad  i,j \in \{1,\ldots, n\}.
\end{equation}

 \section{Appendix: totally umbilical submanifolds of $\mathbb{E}^{n+1}$  }\label{spheres}
Let us recall that 
a submanifold $\Sigma^n$  is umbilical at $p\in \Sigma^n$ if there exist a unit normal  field $N(p)\in N_p\Sigma^n$ and curvature 
$\lambda\in\mathbb{R}$  such that
$$II(p)(X,Y) = \langle X,Y\rangle \lambda(p) N(p),\qquad  \forall X,Y\in T_p\Sigma^n. $$
By definition, $\Sigma^n$ is   totally umbilical if   it is  umbilical at each point of $\Sigma^n$.
 \begin{lem}\label{lemma6}
 A totally umbilical hypersurface  $\Sigma^n \subset \mathbb{E}^{n+1}$ is a piece of a hypersphere or a hyperplane.
 \end{lem}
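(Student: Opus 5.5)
Fix a unit normal field $N$ on $\Sigma^n$, so that total umbilicity reads $A_N = \lambda\, \mathrm{Id}$ with $\lambda$ a smooth function on $\Sigma^n$. The first step shows that $\lambda$ is locally constant, using the Codazzi equation \eqref{codazzi} exactly as in item 1 of Appendix \ref{cristoffel}. When $n\geq 2$, take linearly independent tangent fields $X,Y$. Since $A=\lambda\,\mathrm{Id}$, we have $\nabla_X(A)Y = (X\lambda)Y$, so Codazzi gives $(X\lambda)Y = (Y\lambda)X$. Linear independence then forces $X\lambda = Y\lambda = 0$. Because $X$ and $Y$ are arbitrary, $d\lambda = 0$, and $\lambda$ is constant on each connected component, which we may assume is all of $\Sigma^n$. (When $n=1$ every curve is trivially umbilical, so the statement is understood for $n\geq 2$.)

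The second step splits into two cases.

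\emph{Case $\lambda = 0$.} Here $D_X N = -A_N X = 0$ for every tangent $X$, so $N$ is a constant vector $N_0$. Then $X\langle X_\Sigma, N_0\rangle = \langle X, N_0\rangle = 0$, where $X_\Sigma$ is the position vector. Hence $\langle X_\Sigma, N_0\rangle$ is constant, and $\Sigma^n$ lies in an affine hyperplane orthogonal to $N_0$.

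\emph{Case $\lambda \neq 0$.} Consider the map $c := X_\Sigma + \frac{1}{\lambda} N$. For tangent $X$,
$$D_X c = X + \frac{1}{\lambda} D_X N = X - \frac{1}{\lambda}\lambda X = 0,$$
so $c$ is a fixed point of $\mathbb{E}^{n+1}$. It follows that $|X_\Sigma - c| = \frac{1}{|\lambda|}$, and $\Sigma^n$ is an open piece of the hypersphere of center $c$ and radius $1/|\lambda|$.

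The main (mild) obstacle is the constancy of $\lambda$: it needs $n\geq 2$ and connectedness, and it is precisely what makes the center $c$ genuinely constant in the second case. The rest is a direct computation. The same argument, applied with the umbilical normal direction inside a codimension-$k$ setting, also gives the sphere description of the curvature leaves $\mathcal{E}_\lambda$ used earlier. There one additionally checks, via \eqref{total}, that the mean curvature vector $H_\lambda$ is parallel in the normal bundle, and then runs the same $c = X + H_\lambda/|H_\lambda|^2$ argument.
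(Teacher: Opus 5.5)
Your proof is correct and is essentially the paper's argument. You get constancy of $\lambda$ from the Codazzi equation as in Appendix~\ref{cristoffel}, then constancy of the center $X+\frac{1}{\lambda}N$; the paper obtains the latter by integrating $D_TN=-\lambda T$ along a curve from a base point, which is the integrated form of your $D_Xc=0$. You also make explicit the $\lambda=0$ case and the hypotheses $n\ge 2$ and connectedness, which the paper leaves implicit.

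Your closing aside on the leaves $\mathcal{E}_\lambda$ is incomplete, however. Parallelism of $H_\lambda$ comes from the Codazzi equation of $\mathcal{E}_\lambda$ itself, not from \eqref{total}. Moreover, a constant $c$ only places $\mathcal{E}_\lambda$ inside a hypersphere of $\mathbb{E}^{n+1}$, so the $(k+1)$-plane argument of Lemma~\ref{7} is still needed to conclude it is a round $k$-sphere.
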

 \begin{proof}
 Let $N$ be a unit normal vector field to $\Sigma^n$.
 Let $\gamma(s)$ be  a curve in  $\Sigma^n$ and let $T$ be the unit  vector field tangent  to $\gamma(s)$. Then, since $\frac{DN}{ds} $  and  
  $ \nabla_T N$ are both   orthogonal to $N$, we obtain
 
 \begin{equation}\label{normal1}
D_T N=  \frac{DN}{ds} = \nabla_T N. \end{equation}

And as  any direction in $T\Sigma^n$  is a principal curvature direction, we have 
 \begin{equation}\label{normal2}
 \nabla_T N= -\lambda T.
\end{equation}
From Appendix \ref{cristoffel}.1,   $\lambda$ is constant. Integrating  Equation \eqref{normal2}, we obtain
 $$N(s)-N(0) =-\lambda \int  \frac{DX}{ds}  ds = -\lambda\left( X(s)-X(0)\right).$$
 Hence, if $\lambda \neq 0$, we get 
 $$X(s) = X(0) - \frac{1}{\lambda}(N(s)-N(0) ):= C(0) -\frac{1}{\lambda}N(s). $$
 Thus  $\Sigma^n \subset \mathbb{S}^{n}(C(0),\frac{1}{\lambda}).$
 If $\lambda =0$, $\Sigma^n $ is a piece of a  hyperplane. More generally
 \end{proof}
 
 \begin{lem}\label{7}
 A totally umbilical submanifold   $\Sigma^k \subset \mathbb{E}^{n+1}$ with $k>1$  and with the extra condition  that the 
 mean curvature is parallel,  is a piece of a round sphere.
 \end{lem}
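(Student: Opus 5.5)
The plan is to reduce the statement to Lemma \ref{lemma6} by showing that $\Sigma^k$ lies in an affine $(k+1)$-plane and is totally umbilical there. Write the second fundamental form as $II(X,Y)=\langle X,Y\rangle H$, with $H$ the mean curvature vector, parallel in the normal bundle: $\nabla^\perp H=0$. First I treat the case $H\equiv 0$. Then $II\equiv 0$, so $D_XY=\nabla_XY$ is tangent for all tangent fields. Hence the tangent space is constant along $\Sigma^k$, and $\Sigma^k$ is a piece of an affine $k$-plane. This is a degenerate sphere of infinite radius; otherwise I would simply note that the statement tacitly assumes $H\neq 0$.

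Now assume $H\neq0$. Since $\nabla^\perp H=0$, the length $|H|$ is constant, say $|H|=\lambda>0$. Set $N=H/\lambda$, which is a parallel unit normal field. Its ambient derivative is given by the Weingarten formula: for tangent $X$, $D_XN=-A_NX+\nabla^\perp_XN=-A_NX$. By umbilicity, $\langle A_NX,Y\rangle=\langle II(X,Y),N\rangle=\lambda\langle X,Y\rangle$, so $A_N=\lambda\,\mathrm{Id}$. Therefore $D_XN=-\lambda X$ for every tangent $X$.

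Next I integrate this exactly as in the proof of Lemma \ref{lemma6}. Consider $C:=X+\frac1\lambda N$, where $X$ now denotes the position vector. Its derivative along any tangent direction $T$ is $T+\frac1\lambda(-\lambda T)=0$, so $C$ is constant on the connected piece. Hence $|X-C|=1/\lambda$, and $\Sigma^k$ lies in the round sphere $\mathbb{S}^{n}(C,1/\lambda)$ of $\mathbb{E}^{n+1}$.

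The remaining and main step is to show that $\Sigma^k$ also lies in an affine $(k+1)$-plane, so that it is an open piece of a round $k$-sphere and not just some submanifold of the big sphere. Consider the linear subspace $V_p:=T_p\Sigma\oplus\mathbb{R}H(p)$ of dimension $k+1$. I would show that $V_p$ is constant in $p$ by differentiating spanning sections. For tangent $Y$, $D_XY=\nabla_XY+\langle X,Y\rangle H\in V$. For the normal field, $D_XH=\lambda D_XN=-\lambda^2X\in V$. So $V$ is a $D$-parallel subbundle of the trivial bundle, hence a constant subspace $V_0$. Then $X-C=-\frac1\lambda N$ lies in $V_0$, so $\Sigma^k\subset (C+V_0)\cap\mathbb{S}^n(C,1/\lambda)$. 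This intersection is a round $k$-sphere of radius $1/\lambda$, and $\Sigma^k$ is an open subset of it by dimension count. The hypothesis $k>1$ is what makes umbilicity meaningful here; for $k=1$, umbilicity is vacuous and only the parallel-$H$ assumption would give circles. I expect the parallelism of $V$ to be the only genuinely nontrivial point, and it relies essentially on $\nabla^\perp H=0$.
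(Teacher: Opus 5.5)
Your proof is correct and follows essentially the same route as the paper: you integrate $D_XN=-\lambda X$ as in Lemma~\ref{lemma6} to place $\Sigma^k$ in a hypersphere, then show that the $(k+1)$-plane spanned by $T\Sigma^k$ and $N$ is constant. The paper does the second step by checking that $W=T_1\wedge\cdots\wedge T_k\wedge N$ is $D$-parallel, which is your parallel-subbundle argument in Pl\"ucker form; your version is slightly more careful, since it treats the case $H\equiv 0$, justifies why $D_XY$ stays in $V$, and notes that the center $C$ lies in the plane.
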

 \begin{proof}
 Let $(T_1,\ldots, T_k) $ be an orthonormal basis at some point extended to orthonormal frames  around some point.
 The unit normal vector field  $N =\frac{H}{h}$,  seen as a section of the normal bundle of $ \Sigma^k $, is  parallel, hence,
 for any $i = 1,\ldots, k$,  $(D_{T_i}N)^\perp=0$. 
 Thus, we deduce equation \eqref{normal2}  and, as in Lemma \ref{lemma6}, we conclude  that  
  $\Sigma^k\subset \mathbb{S}^{n}(C(0),\frac{1}{\lambda})$.\\
  On the other hand, consider the $(k+1)$-vector field $ W:=T_1\wedge\cdots\wedge T_k\wedge N$;
 then    
 $$D_{T_i} W = \sum_{\alpha =1}^k T_1\wedge\cdots\wedge  D_{T_i}T_\alpha \wedge\cdots \wedge T_k\wedge N+
  T_1\wedge\cdots\wedge T_k\wedge D_{T_i}N,\quad i\in\{1,\ldots,k\}.$$
  The last term of the RHS is zero because $D_{T_i}N = -\lambda T_i$,  and 
  the first term is also zero since $D_{T_i}T_j \perp T_j$.
 Hence $D_{T_i} W = 0$ and  $W$ is constant on $\Sigma^k$ which means that $\Sigma^k$ is in a $(k+1)$-plane $P$.
Finally,  $\Sigma^k$ is in $S^n\cap P$, which is a round $k$-sphere. \end{proof}
Furthermore,  it is shown (cf. for example Prop. 1.19 in \cite{DT}) 
 that the mean curvature vector field of any umbilical submanifold of dimension  at least two is  parallel. \\
 We then deduce from Lemma \ref{7}
 \begin{cor}
 A totally umbilical submanifold   $\Sigma^k \subset \mathbb{R}^{n+1}$ with $k>1$,  is a piece of  a sphere or a  plane.
 \end{cor}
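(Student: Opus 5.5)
The plan is to deduce the corollary from the preceding one, which states that a totally umbilical submanifold $\Sigma^k\subset\mathbb{E}^{n+1}$ with $k>1$ and parallel mean curvature vector is a piece of a round sphere. What has to be supplied is the parallelism of the mean curvature vector field $H$ for any totally umbilical $\Sigma^k$ with $k\ge 2$, together with the degenerate case $H=0$.

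\textbf{Parallelism of $H$ via Codazzi.} Write $II(X,Y)=\langle X,Y\rangle H$. The Codazzi equation for submanifolds of flat space says that $(\bar\nabla_X II)(Y,Z)$ is symmetric in $X,Y$, where $\bar\nabla$ is the van der Waerden--Bortolotti connection. Umbilicity gives $(\bar\nabla_X II)(Y,Z)=\langle Y,Z\rangle\nabla^\perp_X H$, because the metric is parallel. Hence
$$\langle Y,Z\rangle\nabla^\perp_X H=\langle X,Z\rangle\nabla^\perp_Y H \quad\text{for all } X,Y,Z.$$
Since $k\ge2$, choose $X=Z$ a unit vector and $Y\perp X$. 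This yields $\nabla^\perp_Y H=0$ for every $Y$, since any $Y$ has a unit orthogonal companion. So $H$ is parallel. This is the one place where $k>1$ is used; it mirrors Appendix \ref{cristoffel}.1 for hypersurfaces, and it is the claim the paper cites from \cite{DT}.

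\textbf{Case analysis on $|H|$.} Parallelism gives that $|H|$ is constant.

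If $H\equiv0$, then $II\equiv0$, so $D_XY$ is tangent for all tangent $X,Y$. The tangent $k$-plane field is then constant: differentiate $T_1\wedge\cdots\wedge T_k$ exactly as with $W$ in the proof of Lemma \ref{7}. Hence $\Sigma^k$ lies in a fixed affine $k$-plane and is an open piece of it.

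If $H\neq0$, then $N=H/|H|$ is a parallel unit normal, and Lemma \ref{7} applies directly to give a piece of a round $k$-sphere.

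The main obstacle is confirming that Lemma \ref{7} uses no more than parallel $H$. Its step $D_{T_i}N=-\lambda T_i$ needs $(D_{T_i}N)^\perp=0$, which is parallelism, and $\langle D_{T_i}N,T_j\rangle=-\langle II(T_i,T_j),N\rangle=-\lambda\delta_{ij}$, which is umbilicity with $\lambda=|H|$ constant. Both hold, so the corollary follows.
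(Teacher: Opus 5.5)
Your proof is correct and follows the paper's route: show that $H$ is parallel when $k\ge 2$, then invoke Lemma~\ref{7}. The paper only cites \cite{DT} for parallelism and leaves the plane case implicit. You instead derive parallelism from Codazzi via $\langle Y,Z\rangle\nabla^\perp_X H=\langle X,Z\rangle\nabla^\perp_Y H$, and you treat $H\equiv 0$ separately; that split is needed, since Lemma~\ref{7} uses $N=H/|H|$.
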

 Note that    a helix with constant curvature and nonzero torsion  is umbilical,
  but the mean curvature vector field is not parallel and the curve is not a circle unless the torsion is  null, i.e., unless 
  the normal bundle is  flat.
  \section{Appendix:  derivatives of symmetric polynomials of curvatures  }\label{symPol}
We  compute some   derivatives of needed symmetric functions of the curvatures using  Proposition \ref{evolution}.
Recall that
$$\Lambda_k = \sum_{i=1}^{n-1} \lambda_i^k,\quad  M_k = 
\sum_{i=1}^{n-1} \mu_i^k,\quad  N_{kl}  = \sum_{i=1}^{n-1} \lambda_i^k\mu_i^l.$$
Hence, we have
\begin{equation*}
\begin{array}{ll}
\Lambda_{1,s}  &= \sum \lambda_{i,s} =  \sum \mu_{i} (\lambda_i-\lambda_n)= N_{11}+M_1\frac{\Lambda_1}{3}; \\
\Lambda_{1,ss} &= N_{11,s}+M_1\frac{\Lambda_{1,s}}{3}+ M_{1,s}\frac{\Lambda_{1}}{3}
=\sum \lambda_{i,s}\mu_i +  \sum \lambda_{i}\mu_{i,s}+\frac{M_1}{3}\left(N_{11}+\frac{M_1\Lambda_1}{3}  \right)  
 +\left(M_2 -\frac{\Lambda_1^2}{3}   \right)\frac{\Lambda_1}{3}\\
\phantom{xxx} &=\sum \mu_i^2(\lambda_i-\lambda_n) +  \sum \lambda_{i}(\mu_i^2+\lambda_i\lambda_n)
+\frac{M_1}{3}\left(N_{11}+\frac{M_1\Lambda_1}{3}  \right)  +\left(M_2 -\frac{\Lambda_1^2}{3}   \right)\frac{\Lambda_1}{3} \\
\phantom{xxx} &=2N_{12} +\frac{M_2\Lambda_1}{3} -\frac{\Lambda_2\Lambda_1}{3}
  +\frac{M_1}{3}\left(N_{11}+\frac{M_1\Lambda_1}{3}  \right)   +\left(M_2 -\frac{\Lambda_1^2}{3}   \right)\frac{\Lambda_1}{3} \\
\phantom{xxx} &= \frac{\Lambda_1}{3} \left(- \Lambda_2+2M_2 +\frac{M_1^2}{3} \right)
 + 2N_{12}  +\frac{M_1}{3}N_{11}   -\frac{\Lambda_1^3}{9};  \\
 \Lambda_{2,s} &=2\sum\lambda_i\mu_i(\lambda_i-\lambda_n)= 2N_{21} +2\frac{\Lambda_1N_{11}}{3} ;\\
 M_{1,s} &= \sum \mu_i^2 +\lambda_i\lambda_n = M_2 -\frac{\Lambda_1^2}{3};\\
 M_{2,s} &= 2 \sum \mu_i(\mu_i^2+\lambda_i\lambda_n)  =2M_3-\frac{2}{3}\Lambda_1N_{11};\\
N_{11,s} &=\sum \mu_i^2(\lambda_i-\lambda_n)+\lambda_i(\mu_i^2+\lambda_i\lambda_n)
= 2N_{12}+\frac{\Lambda_1(M_2-\Lambda_2)}{3};\\
N_{12,s} &=\sum \mu_i^3(\lambda_i-\lambda_n)+ 2\mu_i\lambda_i(\mu_i^2+\lambda_i\lambda_n)
= N_{13}+ M_3\frac{\Lambda_1}{3}+2N_{13}-2N_{21}\frac{\Lambda_1}{3} \\
\phantom{xxx} &= 3N_{13}+ (M_3- 2N_{21})\frac{\Lambda_1}{3}.
\end{array}
\end{equation*}
\textbf{Acknowledgements:} The authors would like to thank Cezar Oniciuc 
for his  guidance, his fruitful comments and suggestions.

 \end{document}